\newcommand{\cI}{{\mathcal I}}
\newcommand{\codim}{{\rm codim}}
\newcommand\ep{\varepsilon}
\newcommand{\Dl}{{\Delta}}
\newcommand{\lam}{{\lambda}}
\newcommand{\vph}{{\varphi}}
\newcommand{\sg}{{\sigma}}
\newcommand{\Th}{{\Theta}}
\newcommand\wtil{\widetilde}
\newcommand\what{\widehat}
\newcommand\ol{\overline}
\newcommand\ot{\otimes}
\newcommand\wed{\wedge}
\newcommand\cE{{\mathcal E}}
\newcommand\cF{{\mathcal F}}
\newcommand\sL{{\mathcal L}}
\newcommand\bR{{\mathbb R}}
\newcommand\bC{{\mathbb C}}
\newcommand\bQ{{\mathbb Q}}
\newcommand\bP{{\mathbb P}}
\newcommand\cO{{\mathcal O}}
\newcommand\CE{{\mathcal E}}
\newcommand\CO{{\mathcal O}} 
\newcommand\CS{{\mathcal S}}
\newcommand\CQ{{\mathcal Q}}
\newcommand\CF{{\mathcal F}}
\newcommand\CI{{\mathcal I}}
\def\Psh{\mathop{\rm Psh}\nolimits}
\def\Re{\mathop{\rm Re}\nolimits}
\def\Vol{\mathop{\rm Vol}\nolimits}
\def\rank{\mathop{\rm rank}\nolimits}
\def\End{\mathop{\rm End}\nolimits}
\def\Psh{\mathop{\rm Psh}\nolimits}
\def\Proj{\mathop{\rm Proj}\nolimits}
\def\sm{\mathop{\rm sm}\setminus}
\def\Sym{\mathop{\rm Sym}\nolimits}
\def\tr{\mathop{\rm tr}\nolimits}
\def\Supp{\mathop{\rm Supp}\nolimits}
\def\dbar{\overline\partial}
\def\ddbar{\partial\overline\partial}
\def\cO{{\mathcal O}}
\def\cE{{\mathcal E}}
\def\cF{{\mathcal F}}
\let\ol=\overline
\let\sm=\setminus
\let\Dl=\Delta
\let\ot=\otimes
\let\what=\widehat
\let\ep=\varepsilon
\let\wt=\widetilde
\let\wh=\widehat
\let\vph=\varphi
\let\Th=\Theta
\let\lam=\lambda
\def\bQ{{\mathbb Q}}
\def\bC{{\mathbb C}}
\def\bR{{\mathbb R}}
\def\bP{{\mathbb P}}
\newcounter{lemma}
\renewcommand{\thelemma}{\strut\kern-3pt\arabic{section}.\arabic{lemma}}
\newtheorem{lemma1}[lemma]{\setcounter{equation}{0}}
\let\saveref=\ref
\def\ref#1{\strut\kern3pt{\saveref{#1}}}
\def\eqref#1{({\saveref{#1}})}
\newenvironment{lemma}{\begin{lemma1}{\bf Lemma.}}{\end{lemma1}}
\newenvironment{example}{\begin{lemma1}{\bf Example.}\rm}{\end{lemma1}}
\newenvironment{theorem}{\begin{lemma1}{\bf Theorem.}}{\end{lemma1}}
\newenvironment{corollary}{\begin{lemma1}{\bf Corollary.}}{\end{lemma1}}
\newenvironment{remark}{\begin{lemma1}{\bf Remark.}\rm}{\end{lemma1}}
\newenvironment{definition}{\begin{lemma1}{\bf Definition.}}{\end{lemma1}}
\newenvironment{conjecture}{\begin{lemma1}{\bf Conjecture.}}{\end{lemma1}}
\newenvironment{ass}{\begin{lemma1}{\bf Assumption.}}{\end{lemma1}}
\theoremstyle{plain}
  \newtheorem{thm}{Theorem}[subsection]
  \newtheorem{thm'}{Theorem}[section]
  \newtheorem{prop'}[thm']{Proposition}
  \newtheorem{lem}[thm]{Lemma}
\theoremstyle{definition}
  \newtheorem{notations}[thm]{Notations}
  \newtheorem{rem}[thm]{Remark}
  \newtheorem*{acknowledgement}{Acknowledgement}
\begin{document}
\title[]{Singular Hermitian metrics and positivity of \\ 
direct images of pluricanonical bundles}

\author{Mihai P\u aun}

\address{Mihai P\u{a}un, Korea Institute for Advanced Study\\
85 Hoegiro, Dongdaemun-gu\\
Seoul 130-722, South Korea,}
\email{paun@kias.re.kr}
\date{\today}
\thanks{}

\begin{abstract} This is an expository article.
In the first part we recall the definition and 
a few results concerning singular Hermitian metrics on torsion-free coherent 
sheaves. They offer the perfect platform for the study of properties of 
direct images of twisted pluricanonical bundles which we will survey in the second part.     
\end{abstract}
\maketitle

\tableofcontents

\section{Introduction}

\smallskip

\noindent In birational classification programs 
\cite{KM}, \cite{Cam04} elaborated and successfully implemented in algebraic geometry,
the positivity properties of the canonical bundle 
$$\displaystyle K_X:= \wedge^{\dim(X)}\Omega^1_X$$ of a projective manifold $X$
play a central role. In this context, \emph{positivity} refers e.g.\ to the size of Kodaira dimension $\kappa(X)$, or the existence of holomorphic sections of multiples of approximate $\bQ$-bundles $K_X+ \ep A$, where $A$ is ample 
and $\ep> 0$ is a rational number.

\noindent Next, suppose that we are given a family of manifolds 
\begin{equation}
p:X\to Y
\end{equation}
--instead of an isolated one-- such that the 
total space $X$ and the base $Y$ are non-singular. Then 
the analogue of $K_X$ for $p$ is the so-called
relative canonical bundle 
\begin{equation}
K_{X/Y}:= K_X- p^\star(K_Y).
\end{equation} 
As we see from this definition,
the restriction $\displaystyle K_{X/Y}|_{X_y}$ to a non-singular fiber $X_y= p^{-1}(y)$ of $p$ identifies naturally with the canonical bundle $\displaystyle K_{X_y}$ 
of the fiber. Moreover, many results/conjectures are indicating that the singularities of $p$ and the variation of the
complex structure of its fibers are encoded in the geometry of the 
relative canonical bundle. In conclusion, 
understanding the positivity properties of $K_{X/Y}$ is a 
question of critical importance. 
\smallskip

\noindent ``In practice'', one studies the twisted version of this bundle, i.e.
$\displaystyle K_{X/Y}+ L$, where $(L, h_L)$ is a Hermitian $\bQ$-line bundle 
endowed with a positively curved (possibly singular) metric $h_L$. Usually, the twist
$L$ corresponds to an effective snc $\bQ$-divisor $\sum (1-\nu^j)W_j$ on $X$, 
where $\nu^j\in [0, 1)\cap \bQ$, but this is not always the case.
\smallskip

\noindent A technique which turned out to be very successful in this context 
consists in considering the direct image sheaf
\begin{equation}\label{mess2}
\cE_m:= p_\star\left(m(K_{X/Y}+ L)\right)
\end{equation} 
where $m$ is positive integer, such that $mL$ is a line bundle.
There is an impressive body of articles dedicated to the study of $\cE_m$ via methods 
arising from Hodge theory: \cite{Gr}, \cite{Ft}, \cite{Ka82},
\cite{Ka02}, \cite{Ka09}, \cite{Ko86}, \cite{Ko07}, \cite{Vi1}, as well as
\cite{MT08}, \cite{MT09}, \cite{Fn} and \cite{PS} in recent years. The reader can profitably consult the excellent survey \cite{Ho}, where some of these results are discussed.  
\smallskip

\noindent In the article \cite{B}, the $L^2$ theory combined with methods of
complex differential geometry is successfully used in the
study the direct image sheaf \eqref{mess2}: this can be seen as the starting point of 
the quantitative analysis of its positivity properties. 

\noindent In this text we will present some of the metric properties of the direct image of $m(K_{X/Y}+ L)$ by following \cite{BPDuke}, \cite{Raufi1} and 
\cite{mpst}. The main result we obtain in 
\cite{mpst} states that \emph{the direct image sheaf $\cE_m$ admits a 
positively curved singular Hermitian metric.} Our objective in what follows is to 
explain the main notions/results leading to the proof of this statement.
This survey is organized
as follows.
\smallskip

\noindent
In section two we recall the definition and a few results concerning 
singular Hermitian vector bundles. The guideline is provided by an important theorem 
due to J.-P. Demailly \cite{Dnote} establishing the equivalence 
between the pseudo-effectivity  of a line bundle and the existence of a closed positive current in its 
first Chern class (as recalled in 2.1). The notion of positively curved
singular Hermitian vector 
bundle (and more generally, of a torsion-free coherent sheaf), 
as it has emerged from the articles \cite{BPDuke}, \cite{Raufi1}, 
\cite{Raufi2} and \cite{mpst}, is recalled in sections 2.3 and 2.4 respectively.
Next, the vector bundle counterpart of the pseudo-effectiveness is
the notion of weak positivity, introduced by E.~Viehweg, cf.\ \cite{Vbook}. 
In section 2.5 we show that a sheaf $\cF$ admitting a positively curved 
singular Hermitian metric is weakly positive--this 
is a result from \cite{mpst} and it represents a --one sided only, \emph{h\'elas}...-- 
higher rank analogue of the result of J.-P.~Demailly mentioned above.
\smallskip

\noindent
In section three we analyze the metric properties of 
the direct image sheaves $p_\star(K_{X/Y}+ L)$. In this 
direction, the foundation theorem was
obtained by Bo~Berndt\-sson in \cite{B}. The main result in \cite{B} is 
stated (and proved) in case of a submersion $p$ and for a Hermitian line bundle 
$(L, h_L)$ whose metric is smooth and positively curved: among many other things,
B.\ Berndtsson shows that the direct image $p_\star(K_{X/Y}+ L)$ admits a metric whose curvature form is semi-positive in the sense of Griffiths. 
In this section (cf.\ 3.2 and 3.3) we provide a complete overview of our results in \cite{mpst} as already mentioned: this is a wide generalization of \cite{B} in the context of algebraic fiber spaces $p$, and for a singular Hermitian twisting $(L, h_L)$. 
\smallskip

\noindent By combining the main results of sections two and three, we infer that E.\ Viehweg's 
weak positivity of direct images is induced by the metric properties of these sheaves.
As in the case of a line bundle, the slight advantage of the analytic 
point of view is that it provides a positively curved 
singular Hermitian metric \emph{on the vector bundle itself}, and not only on its approximations 
by algebraic objects (this turned out to be important 
for applications, cf.\ \cite{jcmp}). By contrast, it may happen very well that a weakly positive vector bundle does not admit global holomorphic sections (neither any
of its symmetric powers).   
\smallskip

\noindent We conclude in section four with the brief discussion 
of a very recent result due to S.~Takayama, cf.\ \cite{taka2}.  
\smallskip

\noindent Finally, we would like to mention the very interesting article \cite{HPS} which offers an alternative point of view to many of the results discussed here.

\begin{acknowledgement}
I am grateful to the organizers of the
2015 AMS Summer Institute in Algebraic Geometry for the kind
invitation to give a lecture and to contribute 
the present survey to the proceedings. 
The preparation of this work was 
supported by the National Science Foundation
under Grant No. DMS-1440140 while the author was in residence at the
Mathematical Sciences Research Institute in Berkeley, California, during the
Spring 2016 semester.
\end{acknowledgement}

\section{Singular Hermitian metrics on vector bundles}\label{SsHm}

\noindent In this first part of our survey we present a 
few results concerning the notion of \emph{singular Hermitian metric} on  
vector bundles of arbitrary rank, together with their generalization in the context of
torsion free sheaves.
Our main sources are \cite{mpst}, \cite{BPDuke}, \cite{Raufi1}, \cite{Raufi2}.  
As a preparation for this, we first 
recall some basic facts concerning singular metrics on line bundles
and Griffiths positivity of vector bundles.  
\smallskip

\subsection{Line bundles} Let $X$ be a non-singular projective
manifold. We denote by
\begin{equation}\label{eq10}
\cE_X\subset H^{1,1}(X, \bR)
\end{equation} 
the closure of the convex cone generated by the first Chern classes $c_1(D)$ of
$\bQ$-effective divisors of $X$. A line bundle    
$L\to X$ is \emph{pseudo-effective} (``psef'' for short in what follows) 
in the sense of algebraic geometry if 
\begin{equation}\label{eq11}
c_1(L)\in \cE_X.
\end{equation}
It follows from the definition that 
a line bundle $L$ is psef provided that given any ample line bundle $A$ and any 
positive integer $m$, some multiple of the bundle 
$mL+ A$ admits global holomorphic sections. 
As shown by J.-P. Demailly, this property has a perfect 
metric counterpart, which we will next discuss, cf.\ \cite{Dnote}, \cite{Dpark}
and the references therein. 

Let $\Omega\subset \bC^n$ be the unit ball. A function $\vph:\Omega\to [-\infty, \infty)$ is \emph{plurisubharmonic} (``psh'' for short) provided that the 
following requirements are fulfilled.
\begin{enumerate}
\item[(i)] The function $\vph$ is upper-semicontinuous.
\smallskip

\item[(ii)] The mean value inequality holds true locally at each 
point $x_0\in \Omega$, i.e.
\begin{equation}\label{eq12}
\vph(x_0)\leq \int_0^{2\pi}\vph\big(x_0+ e^{\sqrt{-1}\theta}\xi\big)\frac{d\theta}{2\pi}
\end{equation} 
where $\xi\in \bC^n$ is arbitrary such that $|\xi|\ll 1$ in order to insure that
$x_0+ e^{\sqrt{-1}\theta}\xi\in \Omega$ for any $\theta\in [0, 2\pi]$.
\end{enumerate}

\noindent We denote by $\Psh(\Omega)$ the set of psh functions defined on $\Omega$. If
$\vph\in \Psh(\Omega)$ is smooth, then one can show that 
\begin{equation}\label{eq13} 
\sqrt{-1}\ddbar \vph\geq 0
\end{equation}
at each point $x_0$ of $\Omega$ (this is a consequence of the property (ii) above, cf.\ \cite{Dagbook}, chapter one), in the sense that we have 
\begin{equation}\label{eq14} 
\sum\frac{\partial^2\vph}{\partial z_j\partial \ol z_k}(x_0)v_j\ol v_k\geq 0
\end{equation}
for any vector $v\in \bC^n$.

An important fact about the class of psh functions is that \eqref{eq13} holds true 
\emph{without} the smoothness assumption provided that it is interpreted in the sense
of currents. This can be seen as follows: let 
\begin{equation}\label{eq15} 
\vph_\ep:= \vph\star \rho_\ep
\end{equation} 
be the regularization of $\varphi$ by convolution with a radial kernel $\rho_\ep$. By (ii)
we infer that $\vph_\ep$ is also psh for each $\ep$, and the family of smooth psh functions $\displaystyle (\vph_\ep)_{\ep> 0}$ is monotone (although defined on a slightly smaller domain). 
As $\ep\to 0$, it turns out that for each pair of indexes $(j, k)$ 
the mixed second order derivative
\begin{equation}\label{eq16}
\frac{\partial^2\vph_\ep}{\partial z_j\partial \ol z_k}
\end{equation}
is converging towards a complex measure denoted by $\vph_{j\ol k}$, and the pointwise 
inequality \eqref{eq14} becomes
\begin{equation}\label{eq17} 
\sum\vph_{j\ol k}v_j\ol v_k\geq 0
\end{equation}
for any choice of functions $v_j$ (as a measure on $\Omega$). One can see that the 
measures $\vph_{j\ol k}$ are independent of the particular regularization we 
choose, and we have 
\begin{equation}\label{eq18}
\sqrt{-1}\ddbar\vph= \sqrt{-1}\sum \vph_{j\ol k}dz_j\wedge d\ol z_k
\end{equation}
in the sense of distributions.
As a conclusion, the inequality \eqref{eq13} holds true in the context of arbitrary psh functions.
\medskip

\noindent Returning to the global context, a \emph{singular metric} $h$ on a line bundle 
$L$ is given locally on coordinate sets $\Omega_\alpha$ by 
\begin{equation}\label{eq19} 
h|_{\Omega_\alpha}= e^{-\varphi_\alpha}|\cdot |^2
\end{equation} 
where $\vph_\alpha\in L^1_{\rm loc}(\Omega_\alpha)$ are called local weights of $h$. 
We remark that one can define the \emph{curvature} of $(L, h)$ by the same formula 
as in the usual case
\begin{equation}\label{eq20}
\Theta_h(L)|_{\Omega_\alpha}= \frac{\sqrt -1}{\pi}\ddbar\vph_\alpha;
\end{equation}
this is a consequence of the hypothesis $\vph_\alpha\in L^1_{\rm loc}(\Omega_\alpha)$. 
The important difference is that in this degree of generality the curvature 
$\displaystyle \Theta_h(L)$ becomes a closed (1,1)-current 
(rather than a differential form). 
\medskip

\noindent We recall that a line bundle $L$ is 
\emph{psef in metric sense} if there exists a metric $h$ on $L$ 
such that the corresponding curvature current is positive. In this context we have the
following important result, due to J.-P.~Demailly in \cite{Dnote}.

\begin{theorem}\cite{Dnote}\label{dem11}
Let $L$ be a line bundle on a projective manifold. Then $c_1(L)\in \cE_X$ if and 
only if $L$ is psef in metric sense, i.e. there exists $T$ a closed positive current of 
(1,1)--type
such that $T\in c_1(L)$.
\end{theorem}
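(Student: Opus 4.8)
The plan is to prove the two implications separately. The forward direction, $c_1(L)\in\cE_X\Rightarrow L$ psef in metric sense, is essentially a compactness argument, while the converse is the substantial one and will require the $L^2$ machinery. For the forward direction I would argue as follows. By the definition of $\cE_X$, the class $c_1(L)$ is a limit of classes $\alpha_j=\sum_i a_{ij}\,c_1(D_{ij})$ with $a_{ij}\geq 0$ and $D_{ij}$ effective. To each effective divisor $D$ one attaches its integration current $[D]$, a closed positive $(1,1)$-current representing $c_1(\cO_X(D))$, so each $\alpha_j$ is represented by the closed positive current $T_j:=\sum_i a_{ij}[D_{ij}]$. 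Fixing a Kähler form $\omega$, the mass $\int_X T_j\wedge\omega^{n-1}$ equals the cohomological pairing $\alpha_j\cdot\{\omega\}^{n-1}$, which stays bounded because $\alpha_j\to c_1(L)$. I would then invoke the weak-$\ast$ compactness of positive currents of bounded mass to extract $T_j\rightharpoonup T$; the limit $T$ is closed and positive, and since the cohomology class passes to the limit under pairing against smooth closed forms, $T\in c_1(L)$. This produces the required current.

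For the converse, suppose $T\in c_1(L)$ is a closed positive current, equivalently that $L$ carries a singular metric $h=e^{-\varphi}$ with $\frac{\sqrt{-1}}{\pi}\partial\bar\partial\varphi=T\geq 0$ locally. My plan is to show that for every ample line bundle $A$ and every $m\gg 0$ the twisted bundle $mL+A$ is effective, and then to let $m\to\infty$. Equipping $A$ with a smooth metric $h_A$ of curvature $\geq\omega$ for a Kähler form $\omega$, the bundle $mL+A$ inherits the metric $h^m h_A$, whose curvature $mT+\Theta_{h_A}(A)\geq\omega$ is strictly positive as a current. Choosing a point $x_0$ outside the polar locus of $\varphi$ and inserting a local logarithmic weight of the form $2n\,\chi\log|z-x_0|$ (with $\chi$ a cutoff), the Hörmander--Andreotti--Vesentini $L^2$ estimate — or equivalently the Ohsawa--Takegoshi extension theorem — lets me solve a $\bar\partial$-equation and correct a local holomorphic frame into a global section $s\in H^0(X, mL+A)$ with $s(x_0)\neq 0$. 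Hence $mL+A$ is effective, so $c_1(L)+\tfrac{1}{m}c_1(A)\in\cE_X$; since $\cE_X$ is closed, letting $m\to\infty$ yields $c_1(L)\in\cE_X$.

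The hard part is controlling the singular weight $\varphi$ in this converse direction: the $L^2$ method delivers a \emph{nonzero} section only if the chosen point $x_0$ and the inserted pole are compatible with the integrability (multiplier-ideal) constraints imposed by $\varphi$, namely one needs $e^{-\varphi}$ locally integrable near $x_0$ so that the perturbed weight $\varphi+2n\log|z-x_0|$ becomes genuinely non-integrable and forces the $\bar\partial$-solution to vanish at $x_0$. This is precisely where Demailly's regularization of closed positive currents enters, approximating $T$ by currents with analytic singularities at the cost of an arbitrarily small loss of positivity; it allows me to reduce to a weight with mild singularities and to select $x_0$ at which the relevant multiplier ideal is trivial. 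Once nonvanishing at a single point is secured for all large $m$, the remaining limiting step is the soft argument described above, and the two implications together give the stated equivalence.
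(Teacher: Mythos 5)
Your argument is correct in substance, and the interesting point is how it compares with the paper's own sketch (given in the remark right after the theorem). For the elementary direction, $c_1(L)\in\cE_X\Rightarrow{}$existence of a positive current, the two routes genuinely differ. The paper stays at the level of metrics: since $L$ psef plus $A$ ample makes $mL+A$ big, sections $f^{(km)}_{\alpha,j}$ of $k(mL+A)$ induce weights
\begin{equation*}
\vph_{\alpha,m}=\frac{1}{km}\Big(\log\sum\nolimits_j|f^{(km)}_{\alpha,j}|^2-k\vph_{\alpha A}\Big)
\end{equation*}
on $L$ with curvature bounded below by $-\frac{1}{m}\Theta(A)$, and then extracts a weak limit of these potentials as $m\to\infty$, after a normalization ensuring the limit is not identically $-\infty$. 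You instead represent the approximating classes by integration currents $T_j=\sum_i a_{ij}[D_{ij}]$, bound their masses by the cohomological pairing with $\{\omega\}^{n-1}$, and conclude by weak-$\ast$ compactness. Your route is cleaner in one respect: the normalization of potentials disappears, replaced by the standard fact that the trace measure of a positive $(1,1)$-current controls all its coefficients. The paper's route buys something else: it only uses bigness of $mL+A$ (never the explicit structure of classes in $\cE_X$ as combinations of effective divisors), and it produces the limit directly as the curvature current of a metric on $L$, the form in which the statement is used throughout the survey. For the hard direction your plan coincides with the paper's one-line indication ($L^2$ theory for $\dbar$), and your discussion of the integrability constraint at $x_0$ supplies exactly what the paper suppresses; note only that Demailly regularization is not strictly necessary there: any $x_0$ off the polar set of $\vph$ has Lelong number $\nu(\vph,x_0)=0$, so $e^{-m\vph}$ is integrable near $x_0$ for every $m$ by Skoda's lemma, and the nonvanishing is most safely packaged via Nadel vanishing for $\cI\big(m\vph+2n\chi\log|z-x_0|\big)$ or Ohsawa--Takegoshi extension from the point $x_0$, which also avoids having to control $e^{-m\vph}$ on the support of $\dbar\chi$.

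One quantifier in your converse direction needs repair: ``for every ample $A$ and every $m\gg 0$ the bundle $mL+A$ is effective'' is false as literally stated (take $L=\cO_X$ and $A$ ample with $H^0(X,A)=0$); the $L^2$ construction needs $A$ positive enough to dominate $K_X$ plus the curvature loss from the cut-off logarithmic weight, which is precisely why the paper passes to multiples $k(mL+A)$, $k\gg 0$. The repair costs nothing for your limiting step: a single sufficiently ample $A_0$ with $mL+A_0$ effective for all $m\geq 0$ already gives $c_1(L)+\frac{1}{m}c_1(A_0)\in\cE_X$, hence $c_1(L)\in\cE_X$ by closedness of the cone.
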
    
\begin{remark} The ``if'' part of Theorem \ref{dem11} can be seen as generalization of the celebrated Kodaira embedding theorem: the existence of a singular metric on $L$ with positive curvature current
implies the fact that for any ample line bundle $A$ we have 
$H^0\big(X, k(mL+ A)\big)\neq 0$ provided that $m\geq 0$ and for any $k\gg 0$. 
The proof relies on $L^2$ theory for the $\dbar$ operator. 

The other implication is more elementary, as follows. Let $F$ be a line bundle, and let 
$\displaystyle (\sigma_j)_{j=1,\dots N}$ be a family of holomorphic sections of some 
multiple $mF$ of $F$. Locally on each $\Omega_\alpha$ the section $\sigma_j$ corresponds 
to a holomorphic function $f_{\alpha, j}$ and then we define a metric $h$ on $F$ via its 
local weights
\begin{equation}\label{eq21}
\vph_\alpha:= \frac{1}{m}\log\sum |f_{\alpha, j}|^2.
\end{equation}
It may well happen that the metric $h$ defined in \eqref{eq21} 
is singular, but in any case the corresponding curvature current is positive, cf.\ 
\cite{Dagbook}. 

In a similar manner, the sections of multiples of $F_m:= mL+ A$ are inducing a metric on $L$, by the formula
\begin{equation}\label{eq22}
\vph_{\alpha, m}:= \frac{1}{km}\Big(\log\sum |f_{\alpha, j}^{(km)}|^2- k\varphi_{\alpha A}\Big).
\end{equation} 
where $f_{\alpha, j}^{(km)}$ correspond to a family of sections of $k(mL+ A)$ and 
$\displaystyle \varphi_{\alpha A}$ is a smooth metric on $A$. The curvature of the metric
$h_m$ defined by \eqref{eq22} is greater than $-\frac{1}{m}\Theta(A)$. By an appropriate 
normalization, we can assume that the weak limit of $\vph_{\alpha, m}$ as $m\to \infty$ 
is not identically $-\infty$, and it defines therefore a metric on $L$ whose curvature 
current is semi-positive.   
\end{remark}
\smallskip

\noindent In the next sections we will discuss the higher rank vector bundles analogue
of the results presented here.


\subsection{Vector bundles}
\medskip

\noindent A vector bundle $E$ on a projective manifold $X$ is \emph{weakly positive} if there exist a positive integer $k_0$, together with an ample line bundle $A$ such that the evaluation map
\begin{equation}\label{eq23}
H^0\big(X, S^{m}(S^{k}E\otimes A)\big)\to S^{m}(S^{k}E\otimes A)
\end{equation}
is generically surjective, for any $k\geq k_0$ and $m\geq m_1(k)$, where $m_1(k)$ is a positive integer depending on $k$. 
This notion can be formulated in the more general context of torsion 
free sheaves; an important example is provided by the following deep result due to E. Viehweg. 
\vskip2mm
\begin{theorem}\label{Vi1} \cite{Vi1} Let $f: X\to Y$ be a proper surjective morphism of projective manifolds with connected fibers. If $m$ is a positive integer, then the direct image sheaf $f_\star(mK_{X/Y})$ is weakly positive. 
Here we denote by $mK_{X/Y}$ the $m^{\rm th}$ tensor power of the relative canonical bundle.
\end{theorem}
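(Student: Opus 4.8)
The plan is to follow E.~Viehweg's original strategy, which reduces everything to a single Hodge-theoretic semipositivity statement for the sheaf $f_\star K_{X/Y}$ (the case $m=1$) and then propagates it to arbitrary $m$, and to all symmetric powers, by formal operations on weakly positive sheaves. First I would record the elementary calculus of weak positivity: it suffices to produce, for one fixed ample $A$, the generic global generation of $S^{k}\cE_m\otimes A$ for all large $k$, where $\cE_m:=f_\star(mK_{X/Y})$; and this property is stable under generically surjective quotients, under tensoring with weakly positive sheaves, and can be tested after a finite surjective base change $Y'\to Y$. Since $\cE_m$ is torsion-free, these manipulations let one work over the open locus $U\subset Y$ where $f$ is smooth and disregard the behavior in codimension two.

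The heart of the argument is the base case $m=1$. Over $U$ the sheaf $f_\star K_{X/Y}$ is the Hodge bundle $f_\star\Omega^{d}_{X/Y}$, $d=\dim X-\dim Y$, sitting at the top of the Hodge filtration of the weight-$d$ variation of Hodge structure $R^{d}f_\star\bC$ on the fibers $X_y$. It is therefore equipped with the Hodge metric, whose curvature is semipositive in the sense of Griffiths by the Fujita--Kawamata theory of degenerating Hodge bundles. After a semistable reduction the Hodge metric extends with at worst logarithmic singularities across the discriminant, and Kawamata's positivity theorem converts this semipositive curvature into the weak positivity of $f_\star K_{X/Y}$ as a sheaf on all of $Y$.

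To pass from $m=1$ to general $m$ I would use Viehweg's two combinatorial devices. A cyclic covering $W\to X$ of degree $m$, branched along a general divisor in $|mK_{X/Y}+(\text{ample})|$, has the property that $f_\star(mK_{X/Y})$ occurs as a direct summand of $g_\star K_{W/Y}$ for the induced family $g:W\to Y$, so the case $m=1$ applied to $g$ controls $\cE_m$ up to the chosen ample twist. To handle the \emph{symmetric powers} required by the definition, I would invoke the fiber-product trick: for the $r$-fold fiber product $X^{(r)}=X\times_Y\cdots\times_Y X$ (suitably desingularized) one has a canonical isomorphism $f^{(r)}_\star(mK_{X^{(r)}/Y})\cong\bigotimes^{r}\cE_m$ generically over $U$, so the tensor powers of $\cE_m$ are themselves direct images of pluricanonical sheaves of a fibration, to which the same semipositivity applies; restricting to the symmetric factor and unwinding the stability properties then yields the desired generic global generation.

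The main obstacle is the base case, not the formal bookkeeping: controlling the Hodge metric on $f_\star K_{X/Y}$ near the singular fibers. Semipositivity of the curvature is classical over $U$, but its extension across the discriminant --- equivalently, the behavior of the limiting mixed Hodge structure governed by Schmid's nilpotent orbit theorem --- is exactly what makes the statement deep. This is also the point where the analytic machinery of the present survey offers a shortcut: the $m$-Bergman kernel metric of Berndtsson--P\u{a}un on $\cE_m$ is positively curved directly, bypassing both the covering and the fiber-product constructions, and the implication ``positively curved $\Rightarrow$ weakly positive'' recalled in Section~2.5 then delivers the theorem in one stroke.
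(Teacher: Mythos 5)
Your outline is essentially Viehweg's original proof, and as such it is correct in its broad strokes; but it is a genuinely different route from the one this paper takes, and the difference is the point of the paper. The survey does not reprove the theorem Hodge-theoretically: it cites \cite{Vi1} for the statement, and the proof it actually assembles is the combination of Theorem \ref{Gp imply wp2} (a torsion-free sheaf admitting a positively curved singular Hermitian metric is weakly positive, proved via Ohsawa--Takegoshi extension on a desingularization of $\bP(\cE)$) with the results of Section 3: one writes $mK_{X/Y}=K_{X/Y}+L_m$ with $L_m=(m-1)K_{X/Y}$, endows $L_m$ with the relative $m$-Bergman metric $e^{-\frac{m-1}{m}\vph^{(m)}_{X/Y}}$ of Theorem \ref{rel2}, and invokes Theorems \ref{mpst} and \ref{plurican} to conclude that the canonical $L^2$ metric on $\cE_m=p_\star(K_{X/Y}+L_m)$ is positively curved and extends across the singular fibers (when $\cE_m\neq 0$; otherwise weak positivity is vacuous). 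So the ``shortcut'' of your final paragraph is not an aside --- it \emph{is} the paper's proof, and your main text reconstructs precisely the argument the paper is deliberately replacing. As for what each approach buys: yours stays within algebraic geometry and classical Hodge theory (Fujita--Kawamata semipositivity, cyclic covers, fiber products), but it only ever produces positivity of the algebraic approximations $\widehat{S}^{ab}(\cE_m)\otimes\cO_Y(bA)$; the analytic route equips $\cE_m$ itself with a positively curved singular metric, which is strictly more information (this is emphasized in the introduction and exploited in \cite{jcmp}), and it absorbs a twist $(L,h_L)$ together with its multiplier ideal at no extra cost. Conversely, the analytic route does not entirely escape Viehweg's circle of ideas: the extension step, Theorem \ref{mpst}, still uses weak semistable reduction and a base-change result of Viehweg.

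One warning about your sketch, since its weight is on Viehweg's side: the most compressed phrase, ``restricting to the symmetric factor and unwinding the stability properties'', is exactly where the hard bookkeeping of his proof lives. The cyclic cover branched along a member of $|m(K_{X/Y}+f^\star A)|$ requires such a member, general and nonzero on the generic fiber, which is itself obtained from generic global generation of a twisted direct image; and what the covering then yields is weak positivity of $\cE_m$ \emph{twisted by a positive power of $A$}. Removing that twist is an iteration over the fiber products $X^{(r)}$ with $r\to\infty$ (together with the verification that desingularizing $X^{(r)}$ does not change the direct image), not a single application of the $m=1$ case; a complete write-up would have to spell out that induction.
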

\vskip2mm

\begin{remark}\label{dps}
If the rank of $E$ is one, then we see that $E$ is weakly positive if and only if $c_1(E)\in \cE_X$. 
Therefore, it is reasonable to think that the weak positivity if the higher rank analogue of the 
pseudo-effectiveness in the sense of algebraic geometry. In fact, 
according to \cite{DPS}, the weak positivity of $E$ is equivalent to the fact that the tautological bundle $\cO_E(1)$ on $\bP(E^\star)$ is pseudo-effective, and the union of all curves 
$C\subset \bP(E^\star)$ such that $\displaystyle \cO_E(1)\cdot C< 0$ is contained is an analytic set which is vertical with respect to the projection $\bP(E^\star)\to X$. Here $\bP(E^\star)$ is the space of lines in the dual bundle $E^\star$ of $E$.   
\end{remark}
\smallskip

\noindent From the perspective of Theorem \ref{dem11}, the weak positivity of a vector 
bundle should be equivalent to some positivity property of the curvature tensor associated to
a ``singular metric'' on $E$. In order to make this a bit more precise, 
we recall next the definition of 
Griffiths positivity of a Hermitian vector bundle $(E, h)$.

Let $X$ be a complex manifold, and let $E\to X$ be a vector 
bundle of rank $r\geq 1$, endowed with a smooth Hermitian metric $h$. We denote by 
$$\Theta_h(E)\in {\mathcal C}^\infty_{1,1}\big(X, {\rm End}(E)\big)$$
the curvature form of $(E, h)$.
We refer to \cite[Ch.\ 3, Ch.\ 10]{Dnote} for the definition of this tensor and its basic properties.
\smallskip

\noindent Let $x_0\in X$ be a point, and let $(z_1,..., z_n)$ be a system of local coordinates on $X$ centered at 
$x_0$. We consider a local holomorphic frame $e_1,..., e_r$ of $E$ near $x_0$, orthonormal at $x_0$.  
The Chern curvature tensor can be locally expressed as follows
$$\Theta_h(E)= \sqrt{-1}\sum c_{j\ol k \lambda\ol \mu}dz_j\wedge d\ol z_k\otimes e_\lambda^\star\otimes e_\mu$$
where $j, k= 1,..., n$ and $\lambda, \mu= 1,..., r$. We say that $(E, h)$ \emph{is semi-positive in the sense of Griffiths}
at $x_0$ if 
\begin{equation}\label{eq24}
	\sum c_{j\ol k \lambda\ol \mu}\xi_j\ol \xi_kv_\lambda\ol v_\mu\geq 0
\end{equation}
for every $\displaystyle \xi\in T_{X, x_0}$ and $v\in E_{x_0}$. 
The Hermitian bundle $(E, h)$ is semi-positive in the sense of Griffiths if it satisfies the property above at each point of $X$. The semi-negativity of a Hermitian vector bundle is defined by reversing the sense of the inequality in \eqref{eq24}.

\begin{remark}\label{sing_1} It is well-known (cf.\ \cite[10.1]{Dnote}) that $(E, h)$ is semi-positive if and only if its dual $(E^\star, h^\star)$ is semi-negative in the sense of Griffiths. 
This important duality property is unfortunately not verified by the positivity in the sense of Nakano, which however will not be discussed here.
\end{remark}

\begin{remark}\label{obs} Let $\bP(E^\star)$ be the projectivization of the dual of $E$, by which we mean the space of 
lines in $E^\star$ and let $\displaystyle \cO_E(1)$ be the corresponding tautological line bundle. 
The metric $h$ induces a metric on $\displaystyle \cO_E(1)$ and (almost by definition) we see that if the bundle $(E, h)$ is semi-negative, then
$$ 	\log \vert v\vert_h^2 $$
is plurisubharmonic (psh, for short), for any local holomorphic section $v$ of the bundle 
$\cO_E(1)$.
This observation will be crucial from our point of view, since it gives the possibility of \emph{defining} the notion of Griffiths positivity for a bundle $(E, h)$ without referring to the curvature tensor!
\end{remark}

\smallskip


\noindent It turns out that if $(E, h)$ is semi-positive in the sense of Griffiths, then $E$ is weakly positive (we will prove this statement later in a more general context). As for the converse direction,
the situation is far from clear even if $E$ is ample instead 
of weakly positive. We recall in this respect the following important conjecture proposed by
Ph.\ Griffiths \cite{Gr}, a long standing and deemed difficult problem.

\begin{conjecture}\cite{Gr}
Let $E$ be an ample vector bundle, in the sense that $\cO_E(1)$ is ample on $\bP(E^\star)$. Then $E$ admits 
a smooth Hermitian metric $h$ such that $(E, h)$ is positive in the sense of Griffiths.   
\end{conjecture}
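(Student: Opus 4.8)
The plan is to reduce the problem to the line bundle case, which is understood, and then to confront the genuinely new difficulty, which is analytic. Since $E$ is ample precisely when $\cO_E(1)$ is an ample line bundle on $\bP(E^\star)$, the Kodaira embedding theorem—the smooth counterpart of Theorem \ref{dem11}—immediately produces a smooth metric on $\cO_E(1)$ whose curvature is a positive $(1,1)$-form on the total space. The reason this does not settle the conjecture is that an arbitrary positively curved metric on $\cO_E(1)$ is only a \emph{Finsler} metric on $E$: its restriction to a fibre $\bP(E^\star_x)\simeq \bP^{r-1}$ is some positive metric on $\cO(1)$, not necessarily the Fubini--Study metric attached to a Hermitian inner product on $E_x$. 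A genuine Hermitian metric $h$ on $E$ corresponds exactly to the sub-class of metrics on $\cO_E(1)$ that are fibrewise quadratic, and, through the tautological-bundle dictionary of Remarks \ref{sing_1} and \ref{obs}, Griffiths positivity of $(E,h)$ is equivalent to positivity of the curvature of the metric that $h$ induces on $\cO_E(1)$. So the whole task is to pass from an arbitrary positive Finsler metric to a positive \emph{Hermitian} one. (In rank one the two coincide, and the conjecture is just Kodaira.)

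My strategy for this passage would be a Bergman-kernel (``quantization'') argument through high symmetric powers. Because $p_\star\cO_E(k)=S^kE$ for the projection $p\colon\bP(E^\star)\to X$, a smooth positively curved metric on $\cO_E(k)$, together with a relative volume form, determines fibrewise an $L^2$ metric on $S^kE$; after absorbing the relative adjunction twist so that $\cO_E(k)$ is written as $K_{\bP(E^\star)/X}$ plus a positively curved line bundle, the smooth version of Berndtsson's curvature theorem surveyed in Section 3 (the foundation result of \cite{B}) shows this $L^2$ metric is Nakano, hence Griffiths, positive on $S^kE$ up to a fixed twist. The heuristic is that as $k\to\infty$ the normalized Bergman metric should ``dequantize'' back to the chosen metric on $\cO_E(1)$, so that an appropriate root of these positive metrics on the $S^kE$ ought to converge to a metric on $E$ inheriting the positivity.

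The main obstacle lies exactly in this last step, and is the reason the conjecture remains open. First, positivity is only monotone in one direction under symmetric powers: Griffiths positivity of $E$ implies that of $S^kE$, but there is no formal mechanism forcing the converse, so a positive metric on $S^kE$ does not descend to one on $E$. Second, the asymptotics of the Bergman metric are governed by the \emph{Finsler} metric one started with, so the natural limit of the rescaled metrics is again that Finsler metric and fails in general to be fibrewise quadratic—precisely the property we must recover. Controlling the sign of the full Griffiths curvature tensor under this linearization, rather than merely a trace or a determinant, is the crux: the inequality \eqref{eq24} is a condition on every pair $(\xi,v)$ simultaneously, and no maximum principle or convexity argument is known that respects it.

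An alternative, and to my mind the more promising, route would be to set up a canonical geometric PDE whose solutions are automatically Griffiths positive—for instance a vector-bundle Monge--Amp\`ere equation, or a curvature flow deforming an initial metric towards one with controlled $\ddbar$ of its Chern connection—and to solve it by using the ampleness of $\cO_E(1)$ as the source of the a priori estimates. Here too the decisive difficulty is analytic: one must propagate the tensorial positivity \eqref{eq24} along the flow (or across a continuity method), and the absence of a comparison principle adapted to Griffiths positivity is exactly what has so far resisted every attempt. I would therefore expect any genuine advance to come from identifying such an equation together with a new, tensor-valued version of the maximum principle.
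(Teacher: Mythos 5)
The statement you were asked to prove is Griffiths' conjecture, and the paper does not prove it: it is stated, with the citation \cite{Gr}, precisely as a conjecture, and the surrounding text calls it ``a long standing and deemed difficult problem.'' There is therefore no proof in the paper to compare yours against, and your proposal --- quite properly --- does not claim to be one. What you have written is an accurate map of the obstruction landscape rather than an argument: the reduction of ampleness to a smooth positively curved metric on $\cO_E(1)$ via Kodaira is correct; the observation that such a metric is only a Finsler metric on $E$, while a Hermitian metric corresponds exactly to the fibrewise-quadratic ones, is the right way to isolate the difficulty; and the dictionary you invoke (Griffiths positivity of $(E,h)$ corresponds to positivity of the metric induced by $h$ on $\cO_E(1)$, cf.\ Remark \ref{obs}) is the standard one.

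Two calibration points on your quantization strategy, and then the verdict. First, what Berndtsson's theorem \cite{B} actually yields along the lines you sketch is Nakano (hence Griffiths) positivity of $S^kE\otimes\det E$: the twist by $\det E$ enters through the relative adjunction formula $K_{\bP(E^\star)/X}=\pi^{\star}\det E\otimes\cO_E(-r)$ and cannot be removed by that argument. Your phrase ``up to a fixed twist'' is accurate, but it is worth stressing that this twisted statement is a theorem and is essentially the strongest known unconditional consequence of ampleness in this direction. Second, the failure mode you identify for the dequantization limit is the correct one: the fibrewise Bergman asymptotics reproduce the Finsler metric one started from, so the limit has no reason to be quadratic along the fibres, and no mechanism is known for descending Griffiths positivity from $S^kE$ (or from $S^kE\otimes\det E$) to $E$ --- positivity only propagates upward through symmetric powers. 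Your closing suggestion of a geometric PDE or flow propagating the tensorial inequality \eqref{eq24}, with the missing ingredient being a maximum principle adapted to Griffiths positivity, is in fact the direction pursued in the subsequent literature (notably a Hermitian--Yang--Mills approach to this conjecture), and it remains open there for exactly the reason you give. In short: there is no error in your reasoning, but there is also no proof --- because none exists. The only fully honest assessment is that your proposal correctly recognizes the statement as unproven and locates the crux where it actually lies; it should not be mistaken for, or graded as, a solution.
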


\noindent The notion of \emph{singular Hermitian metric} and the 
corresponding Griffiths positivity of a vector bundle
we will discuss next were formulated so that 
the following two results hold true.
\begin{enumerate}

\item[(a)] A singular Hermitian vector bundle $(E, h)$ which is positively curved 
in the sense of 
Griffiths is weakly positive.
\smallskip

\item[(b)] Let $p: Y\to X$ be an algebraic fiber space such that the Kodaira
dimension of its generic fibers is not $-\infty$.
Then  
the direct image sheaf $\cE:= p_\star(mK_{X/Y})$ admits a positively curved 
singular Hermitian metric 
(in the sense of Griffiths). 

\end{enumerate}

\noindent In particular, we infer that E.\ Viehweg's weak positivity of direct image
$\cE$ (cf.\ (b) and Theorem \ref{Vi1} above) is ``generated'' by the existence of a positively curved metric
on this sheaf.

\subsection{Singular Hermitian metrics: basic properties}
In this subsection we recall the formal definition of a singular Hermitian metric on a vector bundle, and we discuss some of its main properties.

Let $E\to X$ be a holomorphic vector bundle of rank $r$ on a complex manifold $X$.
We denote by
\begin{equation}\label{eq2}
H_{r}:=\{A=(a_{i\ol j})\}
\end{equation} the set of $r \times r$, semi-positive definite Hermitian matrices. Let $\overline H_r$ be the space of 
semi-positive, possibly unbounded Hermitian forms on $\bC^r$.
The manifold $X$ is endowed with the Lebesgue measure.
\medskip

\noindent We recall the following notion.

\begin{definition}\label{sHm}
A {\it singular Hermitian metric} $h$ on $E$ is given locally by a measurable map with values in 
$\overline H_{r}$ such that 
\begin{equation}\label{eq1}
0<\det h<+\infty
\end{equation} 
almost everywhere.
\end{definition}

\noindent In the definition above, a matrix valued function $h=(h_{i\ol j})$ is measurable provided that all entries $h_{i\ol j}$ are measurable.
We note that in the paper \cite[p.\,357]{BPDuke} the condition $0<\det h<+\infty$ almost everywhere is not part of the definition of a {singular Hermitian metric}
on a vector bundle. However, as it was highlighted in \cite{Raufi1}, \cite{Raufi2}, 
this additional condition concerning the determinant is important because of its connection with the notion of
\emph{curvature tensor} associated to a singular Hermitian metric. 
\smallskip

\noindent Let $(E, h)$ be a vector bundle endowed with a 
singular Hermitian metric $h$. Given a {local section} $v$ of $E$, i.e. an element $v \in H^{0}(U,E)$ defined on some open subset $U\subset X$, the function 
$\displaystyle |v|_{h}^2 : U \to \bR_{\ge 0}$ is measurable, given by 
\begin{equation}
|v|_{h}^{2} = {}^{t}v h \ol v=\sum h_{i\ol j}v^{i}\ol{v^{j}}
\end{equation} 
where $v={}^{t}(v^{1},\ldots,v^{r})$ is a column vector.
\medskip

\noindent Following \cite[p.\,357]{BPDuke}, we recall next the notion of positivity/negativity of a singular Hermitian vector bundle as follows.

\begin{definition}\label{curv}
Let $h$ be a singular Hermitian metric on $E$.
\smallskip

\begin{enumerate}

\item[(1)] The metric $h$ is {\it semi-negatively curved} if the function
\begin{equation}
x\to \log |v|_{h, x}^{2}
\end{equation}
is psh for any local section $v$ of $E$.
\smallskip

\item[(2)] The metric $h$ is {\it semi-positively curved} if the dual singular Hermitian metric $h^{\star}:={}^{t}h^{-1}$ on the dual vector bundle $E^{\star}$ is semi-negatively curved.

\end{enumerate}

\end{definition}

\noindent Few remarks are in order at this point.

\begin{remark}\label{bounded}

\smallskip

\noindent $\bullet$ 
If $h$ is a smooth Hermitian metric on $E$ 
in the usual sense the requirement \ref{curv}(1) (resp.\ (2)) is nothing but the classical Griffiths semi-negativity (resp.\ Griffiths semi-positivity) of $h$ as we have recalled in the previous subsection.
\smallskip

\noindent $\bullet$ A-priori a singular Hermitian metric $h$ is only defined almost everywhere, because its coefficients are measurable functions. However, if $h$ is semi-positively curved (or semi-negatively curved), then the coefficients $h_{i\ol j}$ are unambiguously defined at each point of $X$. In the semi-negative case, this can be seen as follows.

Let $v= {}^{t}(1,0,\ldots,0)$ is a local section of $E$ with constant coefficients with respect to a fixed holomorphic frame, then $|v_{1}|_{h}^{2}=h_{1\ol 1}$ is semi-positive and psh. In particular, the function $h_{1\ol 1}$ is well-defined at each point, and it is also locally bounded from above. Next, if we take 
$v={}^{t}(1,1,0,\ldots,0)$, then we have 
\begin{equation}\label{eq25}
|v|_{h}^{2}=h_{1\ol 1}+h_{2\ol 2}+2\text{Re}\, h_{1\ol 2}.
\end{equation}
For $v'={}^{t}(1,\sqrt{-1},0,\ldots,0)$, we have 
\begin{equation}\label{eq26}
|v'|_{h}^{2}=h_{1\ol 1}+h_{2\ol 2}+2\text{Im}\, h_{1\ol 2}.
\end{equation} 
By combining \eqref{eq25} and \eqref{eq26}, we infer that $h_{1\ol 2}$ is everywhere defined, and
\begin{equation}\label{eq27}
|h_{1\ol 2}|^{2}\le h_{1\ol 1}h_{2\ol 2} \le \frac{1}{2}\big(h_{1\ol 1}^2+h_{2\ol 2}^2\big)
\end{equation}
\smallskip

\noindent $\bullet$ If $(E, h)$ is semi-negatively curved, then there exists a constant $C> 0$ such that we have $|h_{i\ol j}|\leq C$, in the sense that we can find a covering of $X$ with co-ordinate open sets on which the inequality above holds. This is a consequence of the previous bullet. In the dual case, i.e. if
$(E, h)$ is semi-positively defined, then the coefficients of $\displaystyle {}^th^{-1}$ are bounded from above.
\end{remark}

\smallskip

\begin{remark} One could define an alternative notion of ``positively curved singular Hermitian metric'' on a vector bundle $E$ by requiring that the tautological line bundle $\cO_E(1)$ on
$\bP(E^\star)$ is pseudo-effective, and such that the Lelong level sets of a closed positive 
curvature current in $\displaystyle c_1\left(\cO_E(1)\right)$
do not project onto $X$ (inspired by \cite{DPS}, cf.\ Remark \ref{dps}). This would correspond to a singular Finsler metric with positive curvature on    
$E$. However, it is unclear whether this induces a positively curved singular Hermitian
metric on $E$; we refer to \cite{D92}, \cite{B} for some results in this direction.  
\end{remark}
 
\medskip

\noindent The following regularization statement (cf.\ \cite{BPDuke}) is an important technical tool, despite of its simplicity.

\begin{lemma}\label{appr} \cite[3.1]{BPDuke} Let  $X\subset \bC^n$ be a polydisc, 
and suppose $h$  is a semi-negatively curved (resp.\ semi-positively curved) singular Hermitian metric on $E$.
Then, on any smaller polydisc there exists a sequence of smooth Hermitian metrics $\{h_{\nu}\}_{\nu}$
decreasing (resp.\ increasing) pointwise to $h$ whose corresponding curvature tensor is Griffiths negative (resp.\ positive).
\end{lemma}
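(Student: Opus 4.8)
The plan is to treat the semi-negatively curved case directly by mollification, and to deduce the semi-positively curved case from it by duality. For the duality step, suppose $h$ is semi-positively curved. Then $h^\star={}^th^{-1}$ is semi-negatively curved and still satisfies $0<\det h^\star<+\infty$ almost everywhere, so once the semi-negative case is settled we obtain smooth Griffiths-negative metrics $g_\nu$ decreasing pointwise to $h^\star$. Setting $h_\nu:={}^tg_\nu^{-1}$ produces smooth metrics which are Griffiths-positive by Definition \ref{curv}(2). Since inversion is order-reversing on the cone of positive definite Hermitian forms, the relation $g_\nu\downarrow h^\star$ yields $h_\nu\uparrow h$, with pointwise convergence following from continuity of matrix inversion. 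Thus it suffices to regularize a semi-negatively curved $h$.

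Fix a holomorphic trivialization of $E$ over the polydisc and let $\rho_\ep$ be a radial approximate identity. I would define $h_\ep:=h\star\rho_\ep$, the entrywise convolution, which is smooth. It is genuinely positive definite: for $a\neq 0$ the quantity ${}^ta\,h_\ep\,\ol a=(|a|_h^2)\star\rho_\ep$ is strictly positive because $|a|_h^2>0$ on a set of positive measure, which is exactly where the hypothesis $\det h>0$ almost everywhere (Definition \ref{sHm}) enters. For the monotonicity, observe that for each constant vector $a$ the function $|a|_h^2=\exp(\log|a|_h^2)$ is psh — since $\log|a|_h^2$ is psh by semi-negativity and the exponential is convex and increasing — hence subharmonic, so its radial regularizations $|a|_h^2\star\rho_\ep$ decrease to $|a|_h^2$ as $\ep\downarrow 0$. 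As this holds for every $a$, the Hermitian forms $h_\ep(z)$ decrease to $h(z)$ at every point of any slightly smaller polydisc (recall from Remark \ref{bounded} that $h$ is well defined pointwise). Choosing $\ep_\nu\downarrow 0$ gives the sequence $h_\nu:=h_{\ep_\nu}$.

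The main point is to verify that each $h_\ep$ is Griffiths-negative, i.e.\ that $\log|v|_{h_\ep}^2$ is psh for every local holomorphic section $v$. A naive estimate only shows that $|v|_{h_\ep}^2$ is psh, which is strictly weaker, so two ingredients are needed. First, a translation trick: for fixed $w$, applying semi-negativity to the holomorphic section $z\mapsto v(z+w)$ shows that $z\mapsto\log|v(z+w)|_{h(z)}^2$ is psh, and precomposing with the translation $z\mapsto z-w$ (which preserves pshness) gives that
\[
F_w(\zeta):=\log|v(\zeta)|_{h(\zeta-w)}^2
\]
is psh in $\zeta$ for each fixed $w$. Second, a convexity principle: since
\[
\log|v(\zeta)|_{h_\ep(\zeta)}^2=\log\int|v(\zeta)|_{h(\zeta-w)}^2\,\rho_\ep(w)\,d\lambda(w)=\log\int e^{F_w(\zeta)}\,\rho_\ep(w)\,d\lambda(w),
\]
the function $\log|v|_{h_\ep}^2$ is a $\rho_\ep$-average of exponentials of the psh functions $F_w$, passed through the logarithm. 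The map $(t_1,\dots,t_N)\mapsto\log\sum_j c_je^{t_j}$ with $c_j>0$ is convex and non-decreasing in each argument, so its composition with psh functions is psh; approximating the integral by such finite log-sum-exp expressions and passing to the limit shows that $\log|v|_{h_\ep}^2$ is psh. This is precisely Griffiths-negativity of the smooth metric $h_\ep$.

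I expect this last convexity principle to be the crux. The translation trick and the monotone convergence are routine once the trivialization is fixed, but the passage from ``$|v|_{h_\ep}^2$ is psh'' to ``$\log|v|_{h_\ep}^2$ is psh'' — which is exactly what separates a Griffiths-negative metric from an arbitrary smooth one — rests essentially on the fact that the logarithm of an average of exponentials of psh functions is again psh.
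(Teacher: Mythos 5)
Your proposal follows the same route as the paper's sketch: mollification in a trivialization over a slightly smaller polydisc, with the semi-positive case obtained by dualizing; your treatment of strict positivity (via $\det h>0$ a.e.\ rather than the paper's device of adding a small flat metric), of the monotonicity in $\ep$ (sub-mean value for the subharmonic functions $|a|^2_h$), and the translation trick making \eqref{eq29} usable for non-constant sections are all correct, and they fill in real details that the survey compresses into ``it follows directly from \eqref{eq29}''. The one step that does not hold up as written is the passage to the limit in your log-sum-exp argument. For fixed $\zeta$ the integrand $w\mapsto e^{F_w(\zeta)}$ is merely measurable (the coefficients of a semi-negatively curved singular metric are locally bounded by Remark \ref{bounded}, but need not be continuous), so the finite Riemann-type sums you propose need not converge to $\int e^{F_w(\zeta)}\rho_\ep(w)\,d\lambda(w)$ pointwise in $\zeta$; and in general a pointwise limit of psh functions is not psh. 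The principle you invoke --- that $\log$ of an integral of exponentials of psh functions is psh --- is true, but its proof is the harmonic-majorant argument: a nonnegative u.s.c.\ function $u$ such that $u\,e^{2\Re g}$ is psh for every holomorphic polynomial $g$ has $\log u$ psh. You can run this inside your own framework: $e^{2\Re g}\,|v|^2_{h_\ep}=|e^{g}v|^2_{h_\ep}$ is psh by your translation-trick computation applied to the holomorphic section $e^{g}v$, and then the lemma gives pshness of $\log |v|^2_{h_\ep}$.

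Moreover, the step you single out as the crux can be bypassed altogether, and this is what makes the paper's terse justification legitimate. For a \emph{smooth} metric, Griffiths semi-negativity of the curvature is equivalent to plurisubharmonicity of $|v|^2$ for \emph{all} local holomorphic sections $v$, with no logarithm: at a point $x_0$ choose a holomorphic section $v$ with prescribed value $v_0$ and $D^\prime v(x_0)=0$; then by \eqref{eq161401} one has $\sqrt{-1}\ddbar |v|^2_{h_\ep}(x_0)=-\langle \Theta_{h_\ep}(E)v_0,v_0\rangle_{h_\ep}$, so pshness of $|v|^2_{h_\ep}$ for all such $v$ forces Griffiths semi-negativity of $\Theta_{h_\ep}(E)$. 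Your remark that pshness of $|v|^2_{h_\ep}$ is ``strictly weaker'' than pshness of its logarithm is accurate only for a single fixed function; quantified over all holomorphic sections of a smooth metric the two conditions are equivalent. So once the translation trick shows that $|v|^2_{h_\ep}$ is an integral over $w$ of the psh functions $e^{F_w}$, hence psh, for every holomorphic $v$, the proof is complete; the log-sum-exp convexity, correctly proved, is a valid but harder alternative road.
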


\noindent In the statement above the sequence $\{h_{\nu}\}_{\nu}$ is called \emph{decreasing} in the sense that
the sequence of functions $\{|s|_{h_{\nu}}^{2}\}_{\nu}$ is decreasing for any constant section $s$, or equivalently, the matrix corresponding to each difference $h_{\nu}-h_{\nu+1}$ is semi-positive definite. 
We sketch next the proof of the lemma.
\begin{proof}
Assume that $h$ is semi-negatively curved, and that the bundle $E$ is trivial. 
We first remark that we can add a small multiple of the flat metric on $E$, so that $h$ is strictly positive;
by this operation, the semi-negativity property is conserved. Then we define
\begin{equation}\label{eq28}
h_\ep(z)= \int\rho_\ep(w- z)h(w)
\end{equation}
where $\rho_\ep$ is the standard convolution kernel. Then we have
\begin{equation}\label{eq29}
|\xi|^2_{h_\ep}= \int\rho_\ep(w- z)|\xi|^2_h
\end{equation}
for any local section $\xi$ with constant coefficients, and $h_\ep$ is a smooth Hermitian metric. It is equally semi-negative, as it follows directly from the expression \eqref{eq29}. 
The monotonicity assertion is a consequence of Jensen convexity inequality.
\end{proof}
\medskip

\noindent As illustration of the usefulness of Lemma \ref{appr}, we will state next a few properties 
of singular Hermitian vector bundles with semi-positive/semi-negative curvature. Needless to say, these results are part of the standard differential geometry of vector bundles  in the non-singular case.

\begin{lemma}\label{tensors}\cite{Raufi1}, \cite{mpst}
Let $(E, h)$ be a vector bundle endowed with a positively curved 
singular Hermitian metric. Then the induced metric on $\Sym^mE$ and $\Lambda^q E$ respectively are positively curved as well. In particular, the determinant $\det E$ admits a metric $\det h$ for which the 
corresponding curvature current is positive.   
\end{lemma}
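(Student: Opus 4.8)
The plan is to reduce the statement to the semi-negative case by duality, and there to combine the regularization Lemma \ref{appr} with the classical behaviour of symmetric and exterior powers in the smooth setting. Recall that $h$ being semi-positively curved means, by Definition \ref{curv}, that the dual metric $h^\star$ on $E^\star$ is semi-negatively curved. The induced metrics are compatible with duality: under the canonical identifications $(\Sym^m E)^\star\cong \Sym^m(E^\star)$ and $(\Lambda^q E)^\star\cong \Lambda^q(E^\star)$ (valid pointwise wherever $h$ is positive definite, hence almost everywhere), one checks the linear-algebra identities $(\Sym^m h)^\star=\Sym^m(h^\star)$ and $(\Lambda^q h)^\star=\Lambda^q(h^\star)$. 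Thus it suffices to prove the following: if $(F,g)$ is a semi-negatively curved singular Hermitian bundle, then the induced metrics $\Sym^m g$ and $\Lambda^q g$ are semi-negatively curved; applying this to $(F,g)=(E^\star, h^\star)$ yields the claim for $(E,h)$. The determinant assertion is then the special case $q=\rank E$, for which $\Lambda^q E=\det E$ is a line bundle and semi-negativity of its dual metric is exactly positivity of the curvature current of $\det h$.

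Now fix a local holomorphic section $s$ of $\Sym^m F$ (the argument for $\Lambda^q F$ is identical); I must show that $x\mapsto \log|s|^2_{\Sym^m g, x}$ is psh. As this is local, I work on a polydisc over which $F$ is trivial and invoke Lemma \ref{appr}: there is a sequence of smooth Hermitian metrics $g_\nu$, Griffiths semi-negative, decreasing pointwise to $g$. For each $\nu$ the induced smooth metric $\Sym^m g_\nu$ on $\Sym^m F$ is again Griffiths semi-negative; this is the classical fact that $\Sym^m F$ is a holomorphic subbundle of $F^{\otimes m}$, that tensor powers of a Griffiths semi-negative bundle are Griffiths semi-negative (pointwise, $A\le 0$ and $B\le 0$ give $A\otimes\Id+\Id\otimes B\le 0$ as a Hermitian form), and that a holomorphic subbundle of a Griffiths semi-negative bundle inherits the property. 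Consequently $\log|s|^2_{\Sym^m g_\nu}$ is psh for every $\nu$.

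It remains to pass to the limit. The map $A\mapsto \Sym^m A$ is monotone on semi-positive Hermitian forms, i.e. $A\ge B\ge 0$ implies $\Sym^m A\ge \Sym^m B$; hence $g_\nu\downarrow g$ forces $\Sym^m g_\nu\downarrow \Sym^m g$, and in particular $|s|^2_{\Sym^m g_\nu}\downarrow |s|^2_{\Sym^m g}$ pointwise. Thus $\log|s|^2_{\Sym^m g}$ is a decreasing limit of psh functions, and is therefore psh unless it is identically $-\infty$; the latter is excluded because $0<\det g<+\infty$ almost everywhere forces $g$, and hence $\Sym^m g$, to be positive definite almost everywhere, so that $|s|^2_{\Sym^m g}>0$ on a set of full measure whenever $s\ne 0$. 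This establishes the psh property, i.e. the semi-negativity of $\Sym^m g$, and the exterior-power case follows verbatim with $\Lambda^q$ in place of $\Sym^m$.

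The main obstacle is ensuring that the limiting procedure returns exactly the induced metric together with a genuinely psh (not $\equiv-\infty$) weight: this is precisely why I dualize first, so that Lemma \ref{appr} supplies a \emph{decreasing} approximation, decreasing limits of psh functions being automatically psh, whereas the increasing approximations available on the semi-positive side would only yield a psh function after upper-semicontinuous regularization. The two computational points to check with care are the monotonicity and continuity of $A\mapsto\Sym^m A$ and $A\mapsto\Lambda^q A$ for the order on Hermitian forms, and the determinant bound $0<\det g<+\infty$ that keeps the induced weights finite and makes the induced objects genuine singular Hermitian metrics.
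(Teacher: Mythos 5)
Your proof is correct, and its engine is the same as the paper's: the approximation Lemma \ref{appr} combined with the fact that a decreasing limit of psh functions which is not identically $-\infty$ is again psh. The difference lies in scope and organization. The paper's own proof is only a sketch of the last assertion: it applies the increasing approximation $h_\nu\nearrow h$ directly on the semi-positive side and observes that $\vph_\nu=-\log\det h_\nu$ is then a \emph{decreasing} sequence of smooth psh functions converging to $-\log\det h$; the assertions on $\Sym^m E$ and $\Lambda^q E$ are left to the reader as ``similar''. You instead dualize at the outset, so that Lemma \ref{appr} provides decreasing approximations $g_\nu\searrow g=h^\star$, and you then treat $\Sym^m$ and $\Lambda^q$ in full: Griffiths semi-negativity of $\Sym^m g_\nu$ and $\Lambda^q g_\nu$ via the embedding into the tensor power (tensor products and holomorphic subbundles preserve Griffiths semi-negativity), and the monotone convergence $\Sym^m g_\nu\searrow \Sym^m g$ via monotonicity of $A\mapsto\Sym^m A$ on semi-positive Hermitian forms. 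That monotonicity --- which you assert rather than prove --- is precisely the point hidden in the paper's ``similar way''; it does hold, e.g.\ by restricting to the symmetric (resp.\ alternating) tensors the telescoping identity $A^{\ot m}-B^{\ot m}=\sum_{k=0}^{m-1}A^{\ot k}\ot(A-B)\ot B^{\ot(m-1-k)}$, each term of which is semi-positive. What your route buys is a complete argument for all three assertions and a correct explanation of why limits should be taken on the semi-negative side (decreasing limits of psh functions need no upper-semicontinuous regularization); what the paper's route buys is brevity for the determinant, where $-\log\det$ converts the increasing matrix sequence into a decreasing psh sequence with no explicit dualization.
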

  
\begin{proof} We only sketch here the proof of the fact that $\det E$ is psef. 
Let $\{h_{\nu}\}_{\nu}$ be the sequence of metrics approximating $h$, with the properties stated in
\ref{appr}. We define $\vph_{\nu}:= -\log\det h_{\nu}$, the weight of the induced metric on the determinant bundle; it is psh (and smooth), and $\vph_{\nu}$ is decreasing to $\vph_{h}:= -\log\det h$.
Thus $\vph_{h}$ is psh and in particular $\vph_{h} \in L^{1}_{\rm loc}(X,\bR)$. 

The other assertions of Lemma \ref{tensors} can be checked in a similar way, so we provide no 
further details.
\end{proof}

\noindent Next, we see that the important \emph{curvature decreasing} property of a 
subbundle is preserved in singular context.

\begin{lemma}\label{subquot}\cite{Raufi1}, \cite{mpst}
Let $h$ be a singular Hermitian metric on $E$. The following assertions are true.
\smallskip
\begin{enumerate}
\item[\rm (1)] Let $S \subset E$ be a subbundle.
Then the restriction $h_S:=h|_S$ defines a singular Hermitian metric on $S$, and $h_S$ is 
semi-negatively curved if $h$ is.
\smallskip

\item[\rm (2)]
 Let $E\to Q$ be a quotient vector bundle.
Suppose that $h$ is semi-positively curved.
Then $Q$ has a naturally induced semi-positively curved singular Hermitian metric $h_Q$.
\end{enumerate}
\end{lemma}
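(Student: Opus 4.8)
The plan is to establish (1) directly from the definition of semi-negative curvature and then to derive (2) from (1) by dualizing. For (1), the curvature statement is essentially immediate: a local holomorphic section $v$ of $S$ is, through the inclusion $S\hookrightarrow E$, also a section of $E$, and by the very definition of the restricted metric one has $|v|_{h_S}^2=|v|_h^2$. Consequently $x\mapsto\log|v|_{h_S,x}^2=\log|v|_{h,x}^2$ is psh for every such $v$ as soon as $h$ is semi-negatively curved, which is exactly the semi-negativity of $h_S$. What really needs to be checked is that $h_S$ qualifies as a singular Hermitian metric in the sense of Definition \ref{sHm}. Measurability is clear, since in a holomorphic frame of $E$ adapted to $S$ the coefficients of $h_S$ form a subfamily of the $h_{i\bar j}$. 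For the determinant condition $0<\det h_S<+\infty$ almost everywhere, I would invoke Remark \ref{bounded}: semi-negativity forces the coefficients $h_{i\bar j}$ to be bounded, whence $\det h_S<+\infty$; and the hypothesis $0<\det h$ a.e.\ means that $h$ is positive definite on a set of full measure, so that its restriction to the subspace $S$ is again positive definite there, giving $\det h_S>0$ a.e.

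For (2), I would pass to the dual exact sequence. The quotient $E\to Q$ fits into $0\to S\to E\to Q\to 0$, whose dual $0\to Q^\star\to E^\star\to S^\star\to 0$ realizes $Q^\star$ as a subbundle of $E^\star$. Since $h$ is semi-positively curved, $h^\star={}^th^{-1}$ is semi-negatively curved by Definition \ref{curv}(2). Applying part (1) to the subbundle $Q^\star\subset E^\star$ with the metric $h^\star$ shows that $(h^\star)|_{Q^\star}$ is a semi-negatively curved singular Hermitian metric on $Q^\star$; in particular it satisfies $0<\det<+\infty$ a.e., so its dual is defined a.e. I would then set $h_Q:=\big((h^\star)|_{Q^\star}\big)^\star$, which is semi-positively curved by Definition \ref{curv}(2), its dual being precisely the semi-negative metric $(h^\star)|_{Q^\star}$.

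The step I expect to carry the actual content is the verification that $h_Q$ defined in this way is the \emph{natural} metric induced on the quotient. This rests on the classical fact that, for Hermitian metrics, ``restrict to a subbundle'' and ``induce on a quotient'' are exchanged under duality: the dual of the quotient metric on $Q$ is the subspace metric on $Q^\star\subset E^\star$. In the smooth situation this is standard; in the singular setting it holds pointwise on the full-measure locus where $h$ is finite and positive definite, so that the identification $Q\cong S^{\perp}$ is available, and it then determines $h_Q$ everywhere because a semi-positively curved metric is fixed by its values off a null set (Remark \ref{bounded}). Keeping track that one never leaves the class of singular Hermitian metrics — that is, that each restriction and each dualization preserves the determinant condition — is where the care is needed, whereas the curvature conclusions are formal consequences of Definition \ref{curv} once part (1) is in place.
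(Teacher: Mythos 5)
Your proposal is correct in substance and follows the same route as the paper: in (1) a local holomorphic section of $S$ is regarded, via $S\subset E$, as a section of $E$ with $|v|_{h_S}=|v|_h$, so plurisubharmonicity of $\log|v|_h^2$ transfers verbatim; and (2) is obtained exactly as you describe, by applying (1) to the subbundle $Q^\star\subset E^\star$ endowed with the semi-negatively curved metric $h^\star$ and dualizing back --- this is what the paper compresses into ``duality properties of the positivity in the sense of Griffiths.''

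The one place your argument under-delivers is the determinant condition. The first clause of (1) --- that $h_S$ is a singular Hermitian metric --- is asserted for an \emph{arbitrary} singular Hermitian metric $h$, with no curvature hypothesis, whereas your proof of $\det h_S<+\infty$ invokes Remark \ref{bounded} and therefore only applies when $h$ is semi-negatively curved. The paper's argument needs no curvature at all: by Definition \ref{sHm}, for a.e.\ $x$ we have $h(x)\in H_r$ and $0<\det h(x)<+\infty$, i.e.\ $h(x)$ is a finite, positive definite Hermitian matrix; in a holomorphic frame of $E$ adapted to $S$, the matrix of $h_S(x)$ is a principal submatrix of $h(x)$, hence again finite and positive definite, so $0<\det h_S<+\infty$ a.e.\ with no appeal to curvature. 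With this substitution your proof is complete; your closing discussion identifying $h_Q$ with the quotient metric on the full-measure locus where $h$ is finite and positive definite is fine as stated.
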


\begin{proof} We will briefly discuss the point (1), since (2) is a consequence of (1) together with the duality properties of the positivity in the sense of Griffiths. 

In the first place, the condition 
\begin{equation}\label{30}
0<\det h<\infty
\end{equation}
together with the fact that $h(x)\in H_r$ for a.e. $x\in X$ shows that 
\begin{equation}\label{31}
0<\det h_S<\infty
\end{equation}
a.e. on $X$. Next, let $v$ be a local section of $S$. Via the inclusion $S\subset E$ given in (1) we can see $v$ as local section of $E$, and then the function
\begin{equation}\label{32}
\log |v|_h^2
\end{equation} 
is psh; since we have $\displaystyle |v|_{h_S}= |v|_h$, the conclusion follows.
\end{proof}

\noindent  Our next statement concerns the 
behavior of the Griffiths positivity/negativity in singular context with respect to inverse images.

\begin{lemma}\label{pullback}\cite{mpst}
Let $f:Y\to X$ be a proper holomorphic surjective map between two complex manifolds, and let $E$ be a vector bundle on $X$.
\smallskip
\begin{enumerate}

\item[\rm (1)] We assume that $E$ admits a singular Hermitian metric $h$ with semi-negative (resp.\ semi-positive) curvature. Then $f^{\star}h$ is a singular Hermitian metric on $f^\star (E)$, and it is semi-negatively (resp.\ semi-positively) curved.
\smallskip

\item[\rm (2)]
Let $X_{1}$ be a non-empty Zariski open subset, and let $Y_{1}=f^{-1}(X_{1})$ be its inverse image.
We consider $h_{1}$ a singular Hermitian metric on $\displaystyle E_{1}=E|_{X_{1}}$.
We assume that the singular Hermitian metric $f^{\star}h_{1}$ on $f^{\star}E_{1}$ extends as a singular Hermitian metric $h_{Y}$ on the inverse image bundle $f^{\star}E$ with 
semi-negative (resp.\ semi-positive) curvature. Then $h_{1}$ extends as a singular Hermitian metric on $E$ with semi-negative (resp.\ semi-positive) curvature.
\end{enumerate}
\end{lemma}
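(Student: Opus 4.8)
The plan is to establish the semi-negative case in full and to deduce the semi-positive case by dualizing, as permitted by Definition \ref{curv}(2) and Remark \ref{sing_1}. For part (1) I would first check that $f^\star h$ is a singular Hermitian metric: measurability is clear, and since $\det(f^\star h)=(\det h)\circ f$ we have $\log\det(f^\star h)=f^\star(\log\det h)$; as $\log\det h$ is psh on $X$ (the semi-negative analogue of Lemma \ref{tensors}, being the weight of the induced metric on the line bundle $\det E$), its pullback is psh, hence $L^1_{\rm loc}$ and $\not\equiv-\infty$ by surjectivity of $f$, so $0<\det(f^\star h)<+\infty$ almost everywhere. For the curvature I would use Lemma \ref{appr} to write, on a small polydisc, $h=\lim_\nu h_\nu$ with $h_\nu$ smooth, Griffiths-negative and decreasing; pullback preserves smooth Griffiths negativity because $\Theta_{f^\star h_\nu}(f^\star E)=f^\star\Theta_{h_\nu}(E)$ and the Griffiths form is merely evaluated on $df(\xi)$, while $f^\star h_\nu$ still decreases to $f^\star h$ since $h_\nu-h_{\nu+1}\ge 0$ pulls back to a semi-positive matrix. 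Then for any local section $v$ of $f^\star E$ the psh functions $\log|v|^2_{f^\star h_\nu}$ decrease to $\log|v|^2_{f^\star h}$, which is thus psh. The semi-positive case follows from the identity $(f^\star h)^\star=f^\star(h^\star)$ together with the semi-negative case applied to $h^\star$ on $E^\star$.

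For part (2), semi-negative case, I fix a local holomorphic section $v$ of $E$ on an open set $U\subset X$ and set $\psi_v:=\log|f^\star v|^2_{h_Y}$ on $f^{-1}(U)$; since $h_Y$ is semi-negatively curved, $\psi_v$ is psh, and on $Y_1\cap f^{-1}(U)$ it equals $(\log|v|^2_{h_1})\circ f$. Let $X'\subseteq X$ be the dense Zariski-open locus where $f$ is a submersion, its complement being analytic (the set of critical values, analytic by properness of $f$). Over $X'$ the map $f$ admits local holomorphic sections $s$, so $\phi_v:=\log|v|^2_{h_1}=\psi_v\circ s$ is psh on $X'\cap X_1$; moreover properness of $f$ gives, for every compact $K\subset U$, the bound $\sup_{X_1\cap K}\phi_v=\sup_{Y_1\cap f^{-1}(K)}\psi_v\le\sup_{f^{-1}(K)}\psi_v<+\infty$, so $\phi_v$ is locally bounded above. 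Since $X'\cap X_1$ is Zariski-open, its complement in $U$ is analytic, and the removable-singularity theorem for psh functions extends $\phi_v$ to a psh function $\Phi_v$ on all of $U$. Here the hypothesis that $X_1$ be \emph{Zariski} open is exactly what guarantees that the set across which we extend is analytic.

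It remains to package these extensions into a metric. Applying the construction to $v=e_i$, $v=e_i+e_j$ and $v=e_i+\sqrt{-1}e_j$ and using the polarization identities \eqref{eq25}--\eqref{eq26} of Remark \ref{bounded}, I recover coefficients $h_{i\ol j}$ on all of $U$ that agree with $(h_1)_{i\ol j}$ on $X_1$; running the same argument on $\det E$ (whose weight $\log\det h_1$ is psh, locally bounded above and $\not\equiv-\infty$) yields $0<\det h<+\infty$ almost everywhere, so $h$ is a genuine singular Hermitian metric on $E$ extending $h_1$, with $\log|v|^2_h=\Phi_v$ psh for every local section, i.e.\ semi-negatively curved. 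The semi-positive case follows by duality exactly as in part (1): the hypothesis says $f^\star(h_1^\star)=(f^\star h_1)^\star$ extends to the semi-negatively curved metric $(h_Y)^\star$ on $f^\star(E^\star)=(f^\star E)^\star$, so the semi-negative statement applied to $E^\star$ and $h_1^\star$ produces a semi-negatively curved extension of $h_1^\star$, whose dual is the desired semi-positively curved extension of $h_1$.

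The main obstacle is the descent-and-extension step of part (2): one must combine \emph{generic smoothness} of $f$ (to know that the downstairs weight $\phi_v$ is psh on a dense Zariski-open set) with \emph{properness} of $f$ (to bound $\phi_v$ from above near the analytic set across which one extends) before the psh extension theorem can be invoked, and it is the Zariski-openness of $X_1$ that renders that set analytic. A secondary technical point, which I would handle as in Remark \ref{bounded}, is verifying that the pointwise psh extensions $\Phi_v$ are mutually compatible and genuinely arise from a single Hermitian form, rather than only from a metric defined almost everywhere.
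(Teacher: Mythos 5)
Your proof is correct and takes essentially the same route as the paper: part (1) by pulling back the smooth approximating metrics of Lemma \ref{appr} and using decreasing limits of psh functions, and part (2) by pulling back local sections, exploiting properness to bound the weights from above, and extending psh functions across the analytic complement of $X_{1}$, with the semi-positive cases obtained by duality. Your extra steps --- descending psh-ness via local holomorphic sections over the submersion locus, checking $0<\det(f^{\star}h)<\infty$ almost everywhere, and reassembling the metric coefficients by polarization --- are details the paper's proof leaves implicit rather than a genuinely different argument.
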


\noindent We remark that the statement above holds even if we do not assume that the map $f$ is surjective, provided that the pull-back $f^{\star}h$ is well-defined. Also, if we have $X_{1}=X$ in (2) 
then we see that the metric $h_{1}$ is semi-negatively (resp.\ semi-positively) curved if and only if $f^{\star}h_{1}$ is.

\begin{proof}
We will discuss the negatively curved case.
\smallskip

\noindent (1) 
Let $U$ be a coordinate open subset in $X$ such that $E|_{U} \cong U \times \bC^{r}$, and $U' \subset$ a smaller open subset so that an approximation result \ref{appr} holds for $h$.
Let $\{h_{\nu}\}$ be a decreasing sequence towards $h$ as in \ref{appr}.
We consider an open subset $V \subset f^{-1}(U')$ and a non-zero section $v \in H^{0}(V,f^{\star}E)$.
Then the sequence $\{\log |v|_{f^{\star}h_{\nu}}^{2}\}_{\nu}$ is decreasing to $\log |v|_{f^{\star}h}^{2} \not\equiv -\infty$.
Since $f^{\star}h_{\nu}$ is smooth and semi-negatively curved, $\log |v|_{f^{\star}h_{\nu}}^{2}$ is psh.
Thus so is its limit $\log |v|_{f^{\star}h}^{2}$. 
\smallskip

\noindent (2)
Let $u\in H^{0}(U,E)$ be any local section.
Then $f^{\star}u\in H^{0}(f^{-1}(U), f^{\star}E)$.
Then by assumption, $\log |f^{\star}u|_{h_{Y}}^{2}$ is psh on $f^{-1}(U)$.
In particular $\log |f^{\star}u|_{h_{Y}}^{2}$ is bounded from above on any relatively compact subset of $f^{-1}(U)$.
For any $x \in U\cap X_{1}$ and a point $y \in f^{-1}(x)$, we have 
$|f^{\star}u|_{h_{Y}}^{2}(y) = |u|_{h_{1}}^{2}(x)$.
Thus we see that the psh function $\log |u|_{h_{1}}^{2}$ a-priori defined only on $U\cap X_{1}$ is bounded from above on $U' \cap X_1$, where $U'\subset U$ is any relatively compact subset. 
Hence, it can be extend as a psh function on $U$, 
so the metric $h_1$ indeed admits an extension $h$ on $X$ in such a way that the Hermitian bundle $(E, h)$ is semi-negatively curved.
\end{proof}
\smallskip

\begin{remark}
In the context of Lemma \ref{pullback}, it would be interesting to know if the following more general statement holds true. \emph{Assume that there exists a singular Hermitian metric $h_Y$ on $f^\star E$ which is 
positively curved. Can one construct a singular Hermitian metric $h_X$ on $E$ such that $(E, h_X)$ is positively curved?} It is likely that the answer is ``yes''.
\end{remark}
\medskip

\subsection{Singular Hermitian metrics on torsion free sheaves}

It turns out that very interesting objects, such as direct images of adjoint bundles 
\begin{equation}
\cE_m:= f_{\star}(mK_{X/Y}+L)
\end{equation} 
do not always have a vector bundle structure everywhere; nevertheless,
it is important to have a notion of ``singular Hermitian metric" on $\cE_m$.
More generally, we introduce in this subsection a notion of (metric) positivity 
for torsion free sheaves. It turns out that
the theory is essentially the same as the vector bundle case. 

\smallskip

\noindent Let $\cE$ be a coherent, torsion free sheaf on a complex manifold $X$.  
We denote by $\displaystyle X_{\cE}\subset X$ the maximum Zariski open subset of $X$ such that
the restriction of $\cE$ to $X_{\cE}$ is locally free.
Since $\cE$ is torsion free, we have $\codim (X\setminus X_{\cE}) \ge 2$. Hence the following definition is meaningful.

\begin{definition}\label{sHm3} Let $\cE$ be a coherent, torsion free sheaf on a complex manifold $X$. 
\begin{enumerate}
\item[(1)]
A {\it singular Hermitian metric} $h$ on $\cE$ is a singular Hermitian metric on the vector bundle $\displaystyle \cE|_{X_{\CE}}$.
\smallskip

\item[(2)]
A singular Hermitian metric $h$ on $\cE$ is {\it semi-positively curved}, 
(resp.\ {\it semi-negatively curved})
if the restriction $\displaystyle \left(\cE|_{X_{\cE}}, h|_{X_{\cE}}\right)$
is a semi-positively curved (resp.\ semi-negatively curved) 
singular Hermitian vector bundle.
\smallskip

\item[(3)]
$\cE$ is {\it semi-negatively} (resp.\ {\it semi-positively}) {\it curved}, if it admits a 
semi-negatively (resp.\ semi-positively) curved singular Hermitian metric.
\end{enumerate}
\end{definition}

\begin{remark}\label{dual} We have the following comments about the definition \ref{sHm3}
\smallskip

\noindent $\bullet$ Let $\cE$ be a coherent, torsion free sheaf on $X$, and let $h$ be a singular Hermitian metric on $\cE$.
Then $h$ induces a metric on $\cE^{\star\star}$, the bi-dual of $\cE$, given that 
\begin{equation}
\cE|_{X_{\cE}} \cong \cE^{\star\star}|_{X_{\cE}}.
\end{equation} 
Therefore $\cE$ is semi-negatively/semi-positively curved if and only if $\cE^{\star\star}$ is 
 semi-negatively/semi-positively curved. 
\smallskip

\noindent $\bullet$ Let $U$ be any open subset of $X$. 
If $h$ is a semi-negatively curved singular Hermitian metric on $\cE$ then for any 
$v \in H^{0}(U,\CE)$ the function
\begin{equation}\label{eq30}
\log |v|_{h}^{2} 
\end{equation}
is psh on $U\cap X_{\CE}$.
Since we have $\codim (X\sm X_{\CE}) \ge 2$ the function \eqref{eq30} above 
extends as a psh function on $U$. 
\smallskip

\end{remark}
\medskip

\noindent We have next the ``sheaf version'' of Lemma \ref{subquot}.

\begin{lemma}\label{subquot2}
Let $h$ be a singular Hermitian metric on the coherent, torsion free sheaf $\CE$.

\begin{enumerate}
\smallskip

\item[\rm(1)] Let $\CS \subset \CE$ be a subsheaf.
Then the restriction $h_\CS:=h|_\CS$ defines a singular Hermitian metric on $\CS$. 
If $h$ is semi-negatively curved, then so is $h_\CS$.
\smallskip

\item[\rm(2)]  Let $\CE\to \CQ$ be a quotient torsion free sheaf.
Suppose that $h$ is semi-positively curved.
Then $\CQ$ has a naturally induced singular Hermitian metric $h_\CQ$ with semi-positive curvature.
\smallskip

\item[\rm(3)]  Let $\CF$ be a torsion free sheaf, and suppose there exists a sheaf homomorphism $a : \CE\to\CF$ which is generically surjective.
Suppose that $h$ is semi-positively curved.
Then $\CF$ has a naturally induced singular Hermitian metric $h_{\CF}$ with semi-positive curvature.
\end{enumerate}
\end{lemma}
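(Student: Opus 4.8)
The plan is to reduce the three assertions to the already-established vector bundle statements of Lemma \ref{subquot}, by restricting everything to the locus where all sheaves involved are simultaneously locally free, and then invoking the codimension-two extension property recorded in Remark \ref{dual}. The key observation throughout is that positivity of a singular Hermitian metric on a torsion free sheaf is, by Definition \ref{sHm3}, a condition on the locally free locus only, and that psh functions extend across analytic sets of codimension $\ge 2$.

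For (1), I would let $X_0 := X_\CE \cap X_\CS$, the intersection of the locally free loci of $\CE$ and $\CS$; since both complements have codimension $\ge 2$, so does $X\setminus X_0$. On $X_0$ the inclusion $\CS\subset\CE$ is an inclusion of honest vector bundles, so Lemma \ref{subquot}(1) applies verbatim: $h_\CS=h|_\CS$ is a well-defined singular Hermitian metric on the bundle $\CS|_{X_0}$, and it is semi-negatively curved if $h$ is. Concretely, for any local section $v$ of $\CS$, viewed via the inclusion as a section of $\CE$, one has $|v|_{h_\CS}^2=|v|_h^2$, so $\log|v|_{h_\CS}^2$ is psh on $X_0$. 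This gives a positively/negatively curved metric on the restriction of $\CS$ to $X_0\cap X_\CS=X_0$, which by Definition \ref{sHm3} is exactly what is required.

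For (2), again restrict to $X_0:=X_\CE\cap X_\CQ$, where the surjection $\CE\to\CQ$ becomes a surjection of vector bundles. Lemma \ref{subquot}(2) then endows $\CQ|_{X_0}$ with a naturally induced semi-positively curved singular Hermitian metric $h_\CQ$, and this defines a semi-positively curved metric on $\CQ$ in the sense of Definition \ref{sHm3} since it is defined on a Zariski open subset of the locally free locus of $\CQ$ whose complement has codimension $\ge 2$. Part (3) I would handle by factoring the generically surjective homomorphism $a:\CE\to\CF$: let $\CQ:=\im(a)\subset\CF$ be the image subsheaf. Since $a$ is generically surjective, $\CQ$ and $\CF$ agree on a Zariski open dense subset, in particular over a Zariski open subset $X_1$ whose complement has codimension $\ge 2$ (both being torsion free). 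On $X_1$ the induced map $\CE\to\CQ$ is a surjection of vector bundles, so by part (2) the quotient $\CQ$ carries a semi-positively curved induced metric $h_\CQ$; restricting to $X_1$ and using $\CQ|_{X_1}\cong\CF|_{X_1}$ transports this to a semi-positively curved singular Hermitian metric $h_\CF$ on $\CF$.

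The one point that requires genuine care—and which I expect to be the main obstacle—is the claim in part (3) that $\CF$ and the image $\CQ$ coincide outside a set of codimension $\ge 2$. Generic surjectivity only guarantees agreement on a dense open set a priori, so I would need to argue, using that $\CF$ is torsion free and the cokernel of $a$ is supported on a proper analytic subset, that after possibly shrinking we may arrange the complement to have codimension at least two; here the torsion-freeness is essential, since a torsion free sheaf is determined by its restriction to the complement of a codimension-two set via the extension of sections recorded in Remark \ref{dual}. Once this identification of loci is in place, the verification that $\log|v|^2_{h_\CF}$ extends as a psh function across $X\setminus X_1$ is immediate from Remark \ref{dual}, and the proof is complete.
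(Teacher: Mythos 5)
Your parts (1) and (2) are correct and take essentially the paper's route: restrict to the common locally free locus, run the vector bundle argument of Lemma \ref{subquot} there, and extend across the codimension-two complement as in Remark \ref{dual}. One imprecision in (1): on $X_0=X_\CE\cap X_\CS$ the map $\CS\hookrightarrow\CE$ is an injection of sheaves between vector bundles, but $\CS$ need not be a \emph{subbundle} of $\CE$ there (the fiberwise map may drop rank along a divisor inside $X_0$), so Lemma \ref{subquot}(1) does not apply ``verbatim''; the paper passes to the smaller open set $W$ where $\CS$ is an honest subbundle. This is harmless for you, because the identity you actually use --- $|v|^2_{h_\CS}=|v|^2_h$ with $v$ viewed as a section of $\CE$, whence $\log|v|^2_{h_\CS}$ is psh --- holds on all of $X_0$ and is exactly the paper's argument.

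Part (3), however, has a genuine gap, precisely at the point you flagged, and it cannot be closed the way you propose. It is false in general that $\CQ=\im (a)$ and $\CF$ agree outside a set of codimension $\ge 2$: take $\CE=\CF=\cO_X$, let $s$ cut out a smooth hypersurface $D$, and let $a$ be multiplication by $s$. Then $a$ is generically surjective and both sheaves are torsion free (even locally free), yet $\CF/\CQ\cong\cO_D$ is supported on the divisor $D$. Thus torsion-freeness of $\CF$ gives no control on the codimension of the non-surjectivity locus, and ``shrinking'' is not available: the metric must be produced on (almost all of) $X_\CF$, not on a smaller space. The concluding extension step fails as well: by Definition \ref{sHm3}, semi-positivity of $h_\CF$ means semi-negativity of the \emph{dual} metric, i.e. psh-ness of $\log|\xi|^2_{h_\CF^\star}$ for local sections $\xi$ of $\CF^\star$ (not of $\log|v|^2_{h_\CF}$ for sections of $\CF$, as you wrote); and Remark \ref{dual} extends psh functions only across sets of codimension $\ge 2$, whereas here you must cross the divisor $X\setminus X_1$, where extension requires a local bound from above that you have not established.

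The repair is the paper's proof: dualize \emph{before} transporting the metric. The dual of the generically surjective map $a$ is a sheaf injection $a^\star:\CF^\star\hookrightarrow\CE^\star$ defined on all of $X$ (injective precisely because $a$ is generically surjective). Since $h$ is semi-positively curved, $h^\star$ is semi-negatively curved, so part (1) applies to the subsheaf $\CF^\star\subset\CE^\star$ and gives a semi-negatively curved metric on $\CF^\star$; in this step the psh functions $\log|a^\star\xi|^2_{h^\star}$ are defined on the whole open set in question, up to a codimension $\ge 2$ subset, so only the harmless extension of Remark \ref{dual} is ever invoked --- no extension across a divisor is needed. Dualizing once more yields a semi-positively curved metric on $\CF^{\star\star}$, hence on $\CF$ via $\CF|_{X_\CF}\cong\CF^{\star\star}|_{X_\CF}$. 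If you want to keep your factorization through the image, the same fix works verbatim with the injection $\CF^\star\hookrightarrow\CQ^\star$ in place of $\CF^\star\hookrightarrow\CE^\star$.
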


\begin{proof} We will only discuss the points (1) and (3).

\noindent (1) 
By restricting everything on the maximum Zariski open subset where $\CS$ is locally free, we may assume $\CS$ is locally free.
Let $W$ be a Zariski open subset such that $\CE$ is locally free and $\CS$ is a subbundle of $\CE$.
Then $h_\CS=h|_\CS$ defines a singular Hermitian metric on $\CS$ over $W$, and hence induces a singular Hermitian metric on $\CS$ over $X$ by \ref{dual}.
Let $U$ be an open subset and $v \in H^{0}(U,\CS) \subset H^{0}(U,\CE)$.
Then on $U\cap W$, $|v|_{h_{\CS}}^{2}=|v|_{h}^{2}$ (in the right hand side, $|v|_{h}^{2}$ is measured as local section of the sheaf $\CE$, i.e.\ $v\in H^{0}(U,\CE)$).
If $h$ is negatively curved, then $\log |v|_{h}^{2}$ is a.e.\,psh on $U$, and hence $h_{\CS}$ is a.e.\,negatively curved.
\smallskip 

\noindent (3)
By dualyzing the map $a$, we obtain a sheaf injection $\CF^{\star}\to \CE^{\star}$.
Thanks to the point (1), we obtain a singular Hermitian metric $h^{\star}|_{\CF^{\star}}$ on $\CF^{\star}$ 
with negative curvature. This metric is only defined on the set $W_{\cE}\cap W_{\CS}$, but it 
extends to $W_{\CS}$ by the procedure explained in Remark \ref{dual}.
By taking the dual, we have a singular Hermitian metric $(h^{\star}|_{\CF^{\star}})^{\star}$ (not exactly same, but the discrepancies eventually occur on a measure zero set only) on $\CF^{\star\star}$ with positive curvature.
We then use \ref{dual} again to obtain a singular Hermitian metric on $\CF$.
\end{proof}

\subsection{Griffiths semi-positivity and weak positivity}
\smallskip

\noindent Let $\cF$ be a coherent, torsion free sheaf on a variety $X$; 
we denote by $S^{m}(\CF)$ the $m$-th symmetric tensor product of $\CF$ so 
that $S^{0}(\CF) = \CO_{Y}$, and let $\what{S}^{m}(\CF)$ be the double dual of the sheaf $S^{m}(\CF)$.

\begin{definition}\label{dd-ample}
Let $X$ be a smooth projective variety, and let $\CF$ be a torsion free coherent sheaf on $X$.
Let $A$ be a fixed ample divisor on $X$.
Then $\CF$ is {\it weakly positive} if there exists a Zariski open subset $X_0\subset X$ such that 
for any integer $a > 0$, there exists an integer $b>0$ such 
that 
\begin{equation}\what S^{ab}(\CF)\ot\CO_{X}(bA)
\end{equation} 
is generated by global sections at each point $x\in X_0$. 
\end{definition}
\smallskip

\noindent The following statement represents a partial generalization of Theorem \ref{dem11}
for arbitrary rank vector bundles.

\begin{theorem}\label{Gp imply wp2}\cite{mpst}
Let $X$ be a smooth projective variety, and let $\CE$ be a coherent, torsion free sheaf on $X$.
Suppose that $\CE$ admits a positively curved singular Hermitian metric $h$.
Then $\CE$ is weakly positive.
\end{theorem}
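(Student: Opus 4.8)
The plan is to reduce the statement to the pseudo-effectivity of a single line bundle on a projective bundle over $X$, where Theorem \ref{dem11} and its effective, $L^2$--based refinements apply, and then to push the resulting sections back down to $X$.

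First I would projectivize. Set $Y := \bP(\CE^\star)$ with projection $\pi : Y \to X$, and let $L := \CO_{\CE}(1)$ be the tautological line bundle, characterized over the locally free locus $X_\CE$ by the quotient $\pi^\star\CE \twoheadrightarrow L$ and by $\pi_\star\CO_{\CE}(m) = S^m\CE$. Globally, using $\codim(X\setminus X_\CE)\ge 2$, one has $\pi_\star\CO_{\CE}(m) = \what S^m(\CE)$. Over $\pi^{-1}(X_\CE)$ the positively curved metric $h$ induces, through the quotient map and Lemma \ref{subquot}(2), a semi-positively curved singular metric $h_L$ on $L$; equivalently, by Remark \ref{obs}, its local weight is psh. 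By the codimension bound and the extension principle of Remark \ref{dual}, this weight extends across $\pi^{-1}(X\setminus X_\CE)$, so $L$ carries a singular metric with positive curvature current on all of $Y$. By Theorem \ref{dem11}, $L$ is pseudo-effective.

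Next I would fix an ample twist. Since $L$ is $\pi$--ample and $A$ is ample on $X$, there is an integer $c_0$ with $M := L\otimes\pi^\star\CO_X(c_0 A)$ ample on $Y$. By the projection formula and the double-dual identification above,
\[
H^0\big(Y,\ \CO_{\CE}(ab)\otimes\pi^\star\CO_X(bA)\big)\ \cong\ H^0\big(X,\ \what S^{ab}(\CE)\otimes\CO_X(bA)\big),
\]
and, because $\CO_{\CE}(1)$ is relatively very ample, global generation of the left--hand line bundle along a whole fibre $\pi^{-1}(x)$ is equivalent to global generation of $\what S^{ab}(\CE)\otimes\CO_X(bA)$ at the point $x$. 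Thus it suffices to produce, for each $a>0$ and a suitable large $b>0$, enough sections of $\CO_{\CE}(ab)\otimes\pi^\star\CO_X(bA)$ to generate it along the fibres over a fixed Zariski open $X_0\subset X$. Writing $\CO_{\CE}(ab)\otimes\pi^\star\CO_X(bA)=kM+R$ with $k=\lfloor b/c_0\rfloor$ and $R=(ab-k)L+(b-kc_0)\pi^\star A$ a sum of the pseudo-effective $L$ and a nef pullback, I would equip $R$ with the positive singular metric built from $h_L$ and a smooth metric on $A$, and invoke the quantitative ($L^2$/Nadel) content behind Demailly's theorem: for $b$ (hence $k$) large, $kM$ is positive enough that $kM+R$ twisted by the multiplier ideal $\mathcal I$ of the metric on $R$ is globally generated. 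The good set is where $\mathcal I$ is trivial: for a general $x\in X$ the fibre value $h(x)$ is a genuine positive-definite Hermitian form, so $h_L$ restricts to a \emph{smooth} metric on $\pi^{-1}(x)$ and the Lelong numbers vanish there. Hence the non-trivial locus of $\mathcal I$ is $\pi$--vertical (this is the verticality built into Remark \ref{dps}); taking $X_0$ to be the complement of its image in $X$ and pushing forward by $\pi_\star$ yields generation of $\what S^{ab}(\CE)\otimes\CO_X(bA)$ on $X_0$, which is exactly weak positivity.

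I expect the main obstacle to be precisely this last step: converting the mere pseudo-effectivity of $L$ into Viehweg--style global generation on a fixed open set. This needs the effective, $L^2$-theoretic strength of Theorem \ref{dem11} (Ohsawa--Takegoshi extension and Nadel vanishing) rather than its statement alone, uniform control of the multiplier ideals $\mathcal I$ so that the good locus $X_0$ is independent of $a$, and a careful verification that fibrewise generation of the line bundle on $Y$ corresponds exactly to generation of the double-dual symmetric power $\what S^{ab}(\CE)$ on $X$, including the bookkeeping across $X\setminus X_\CE$ via $\codim\ge 2$.
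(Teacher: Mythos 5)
Your overall strategy (projectivize, endow $\CO_{\CE}(1)$ with the metric induced by $h$, produce sections by $L^2$ methods, push back down) is the same as the paper's, but the step on which everything hinges fails as stated. You assert that, since $\CO_{\CE}(1)$ is relatively very ample, global generation of $\CO_{\CE}(ab)\otimes\pi^{\star}\CO_X(bA)$ at every point of a fibre $\pi^{-1}(x)$ is \emph{equivalent} to generation of $\what S^{ab}(\CE)\otimes\CO_X(bA)$ at $x$. Only one implication is true. Generation of the direct image at $x$ means that the restriction map $H^0\big(Y,\CO_{\CE}(ab)\otimes\pi^{\star}\CO_X(bA)\big)\to H^0\big(\pi^{-1}(x),\CO(ab)\big)$ is \emph{surjective}; pointwise generation along the fibre only says that the image of this map is a base-point-free subspace of $H^0\big(\bP^{r-1},\CO(ab)\big)$, and such subspaces can be proper (already on $\bP^1$ the span of $x_0^2,x_1^2$ inside $H^0(\CO(2))$ is base-point free). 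So the Nadel-vanishing/Castelnuovo--Mumford type statement you invoke -- global generation of $(kM+R)\otimes\CI(h_R)$ for $k\geq k_0$ uniform in $R$ -- does not yield what Definition \ref{dd-ample} requires. This is exactly why the paper argues differently: it applies the Ohsawa--Takegoshi theorem on the complete K\"ahler open manifold $Y_1=f^{-1}(X_{\cE})$ to \emph{extend} every section of $(K_Y+\sL_0+mL)|_{Y_x}$ to $Y_1$, i.e.\ it proves surjectivity of the restriction morphism \eqref{37}, which is precisely the statement that the stalk of the direct image is generated. To repair your argument you would have to upgrade global generation to surjectivity of restriction, e.g.\ by applying Nadel vanishing to a metric whose multiplier ideal sits inside $\CI(h_R)\cdot\CI_{\pi^{-1}(x)}$ with an ample margin independent of $a,b$ -- at which point you are essentially reconstructing the paper's extension argument.

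Two further points need care. First, you carry out the analytic work on $\bP(\CE^{\star})$ as if it were a smooth projective variety in which $\pi^{-1}(X\setminus X_{\CE})$ has codimension at least two: in fact $\bP(\CE)$ may be singular and reducible, and that preimage is typically a divisor, so extending the weight of $h_L$ ``by the codimension bound'' is unjustified (it can be saved using the local upper bounds on the coefficients of $h^{\star}$, but not by codimension); the paper instead passes to a desingularization, where relative ampleness of $L$ is lost, and compensates with the effective divisor $W$ of properties ($a_1$)--($a_2$). Second, the uniformity of $X_0$ in $a$, which you honestly flag: your good set is the complement of $\pi\big(V(\CI)\big)$ with $\CI=\CI\big(h_L^{\otimes(ab-k)}\big)$ depending on $a$ and $b$, and since $(\det h=\infty)$ is merely pluripolar (possibly Zariski dense), no single Zariski open set follows from your construction; note that the paper's own sketch first obtains generation only on $X_{\cE}\cap(\det h<\infty)$ and compresses the passage to a Zariski open set into its final ``large tensor powers'' sentence, so on this particular point your proposal and the survey's sketch share the same looseness.
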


\begin{proof}

 \noindent As a preliminary discussion, we recall some facts due to \cite[V.3.23]{Nbook}.
 Let $\pi : \bP(\CE) \to X$ be the scheme associated to $\CE$, together with its tautological 
 line bundle $\CO_{\CE}(1)$. Formally we have $\displaystyle 
\bP(\CE) = \Proj \big(\bigoplus_{m \ge 0} S^m(\CE)\big)$.
Let $X_{\cE} \subset X$ be the maximum Zariski open subset of $X$ such that the restriction 
$\displaystyle E:= \CE|_{X_\cE}$ is locally free. Then we have 
\begin{equation}\label{eq38}
\pi^{-1}(X_\cE)\cong \bP(E^\star)
\end{equation}
and the restriction of the tautological bundle to the set \eqref{eq38} identifies 
with $\cO_{E}(1)$.

Let $\mu:Y \to \bP(\CE)$ be the desingularization of the component of $\bP(\CE)$ containing $\pi^{-1}(X_\cE)$; we can assume that $\mu$ is an isomorphism over $\pi^{-1}(X_\cE)$.
We denote by $f=\pi\circ\mu : Y \to X$ the resulting morphism, and
we define
\begin{equation}\label{eq32}
	Z:= f^{-1}(X\setminus X_\cE),\quad Y_1:= f^{-1}(X_\cE);
\end{equation}
then we can assume that $Z \subset Y$ is a divisor.
We denote by $L=\mu^{\star}\CO_{\CE}(1)$ the inverse image of the tautological bundle on
$\bP(\cE)$. Let $A$ be an ample line bundle on $X$. Then there exists an effective $\bQ$-divisor
$W$ on $Y$, such that the following two properties are verified:
\begin{enumerate} 
\smallskip

\item[($a_1$)] We have $\Supp (W)\subset Z$, so that via the map $f$ the divisor $W$ projects in codimension at least two.
\smallskip

\item[($a_2$)] There exists a positive rational number $\ep_0$ such that the $\bQ$-divisor
\begin{equation}\label{eq35}
f^\star A+ \ep_0 L- W
\end{equation}
is ample. 
\end{enumerate}
These properties are standard, and we are referring to \cite[V.3.23]{Nbook} for 
a full justification --e.g. one 
can use the fact that $\cE$ is a quotient of a vector bundle $V$ 
(since $X$ is projective) and then the scheme $\bP(\cE)$ is embedded in the manifold
$\bP(V^\star)$. 
\smallskip

In particular, there exists an integer $m_0\gg 0$ such that 
\begin{equation}\label{36}
\sL_0:= m_0\big(f^\star A+ \ep_0 L- W\big)- K_Y
\end{equation} 
is an ample line bundle. In what follows, we will show that the restriction morphism
\begin{equation}\label{37}
H^0\big(Y_1, K_Y+ \sL_0+ mL|_{Y_1}\big)\to H^0\big(Y_x, K_Y+ \sL_0+ mL|_{Y_x}\big) 
\end{equation}
is surjective, for any $m\geq 0$. Here the point $x\in X_\cE$ is assumed to 
belong to the set $(\det h< \infty)$ and  
we denote by $Y_x:= f^{-1}(x)$ the fiber of $f$ over $x\in X$. The surjectivity of
\eqref{37} will be a consequence of the $L^2$ theory, as follows.
\smallskip

\noindent We recall that the sheaf $\cE$ is endowed with a positively curved metric $h$. 
This induces a metric $h_L$ on the bundle $\displaystyle L|_{Y_1}$, such that
\begin{enumerate}
\smallskip

\item[(i)] \emph{The curvature current $\displaystyle \Theta_{h_L}(L|_{Y_1})\geq 0$ 
is semi-positive.} This is a direct consequence of the fact that $h$ is positively 
curved.
\smallskip

\item[(ii)] \emph{The multiplier ideal $\cI(h_L^{\otimes m}|_{Y_x})$ 
of the restriction of $h_L^{\otimes m}$ to the fiber $Y_x$ is trivial for any $m\geq 0$,
provided that $x\in (\det h< \infty)$.} This is a consequence of the third bullet
of Remark \ref{bounded}: given that $h$ is positively curved, 
its eigenvalues at $x$ are bounded from below away from zero. If in addition the determinant of $h(x)$ is finite, then the eigenvalues of $h(x)$ are bounded from above as well.
In particular the corresponding multiplier sheaf is trivial.  
\end{enumerate}
\smallskip

\noindent Our next observation is that the manifold $Y_1$ is complete K\"ahler. Indeed, by \eqref{eq38} the manifold $Y_1$ can be identified with $\bP(E^\star)$, where $E^\star$ is a vector bundle over $X_1$. Now the manifold $X_1$ carries a complete K\"ahler metric, say $\omega_1$ 
cf.\ \cite{D82}, and a small multiple of the curvature of $\cO_E(1)$ plus the inverse image of $\omega_1$ will be a complete K\"ahler metric on $Y_1$. 

Thus we are in position to apply Ohsawa-Takegoshi theorem, cf.\ \cite{OT}: the 
bundle $\sL_0+ mL$ is endowed with a metric whose local weight is $\vph_0+ m\vph_L$, where
$e^{-\vph_0}$ is a smooth 
metric with positive curvature on $\sL_0$ (the existence of such object is insured by \eqref{36} above). The curvature conditions in \cite{OT} are clearly satisfied, as soon as
$m_0$ is large enough --which we can assume--. And by the point (ii) above the 
integrability requirements are automatically satisfied. This shows that the morphism 
\eqref{37} is indeed surjective. 

\noindent In conclusion, we have shown that for any $m\gg 0$ 
the global holomorphic sections of the bundle
\begin{equation}
S^mE\otimes A^{m_0}|_{X_\cE}
\end{equation}
are generating the stalk $S^mE\otimes A^{m_0}_x$. These objects 
extend automatically as sections of $\what S^m\cE\otimes A^{m_0}$. We have therefore 
showed that the global generation property holds true for any point of the set
\begin{equation}
X_\cE\cap \big(\det h<\infty\big).
\end{equation}
By taking large enough tensor powers of $\what S^m\cE\otimes A^{m_0}$ we see that the same global generation property holds true on a Zariski open set,   
and the proof of Theorem \ref{Gp imply wp2} is finished.
\end{proof}

\medskip

\noindent We state next a more complete form of Theorem \ref{Gp imply wp2}, and we refer to
\cite{mpst} for the proof.

\begin{theorem} \label{bigness}
Let $X$ be a smooth projective variety, and let $\CF$ be a torsion free coherent sheaf on $X$.
Let $\bP(\CF)$ be the scheme over $X$ associated 
to $\CF$, say $\pi : \bP(\CF) \to X$, and let $\CO_{\CF}(1)$ be the tautological line bundle on $\bP(\CF)$.
Suppose that $\CO_{\CF}(1)|_{\pi^{-1}(X_{\cF})}$ admits a singular Hermitian metric $g$ with semi-positive curvature. We assume 
that there exists a point $y \in X_\cF$ such that $\CI(g^k|_{\bP(\CF_y)})=\CO_{\bP(\CF_y)}$ for any $k>0$, where $\bP(\CF_{y})=\pi^{-1}(y)$.
Then
\smallskip 

\noindent 
{\rm (1)} $\CF$ is weakly positive at $y$.
\smallskip 

\noindent 
{\rm (2)} Assume moreover that there exists an open neighborhood $W$ of $y$ and a K\"ahler form $\eta$ on $W$ such that $\displaystyle 
\Th_{g}\left(\cO_{\cF}(1)\right) - \pi^{\star}\eta \ge 0$ on $\pi^{-1}(W)$, then $\CF$ is big.
\end{theorem}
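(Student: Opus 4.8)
The plan is to mirror the proof of Theorem~\ref{Gp imply wp2}, the present statement being the version in which the metric is given directly on the tautological bundle and the integrability hypothesis is imposed along a single fibre. First I would fix the geometry exactly as in that proof: let $\mu : Y \to \bP(\CF)$ be a desingularization of the component of $\bP(\CF)$ containing $\pi^{-1}(X_\cF)$, chosen to be an isomorphism there, put $f := \pi\circ\mu$, $L := \mu^\star\cO_\CF(1)$, $E := \CF|_{X_\cF}$, and set $Y_1 := f^{-1}(X_\cF)$, $Z := f^{-1}(X\setminus X_\cF)$ (which we may take to be a divisor) and $Y_y := f^{-1}(y)$. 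The metric $g$ pulls back to a semi-positively curved singular metric $h_L := \mu^\star g$ on $L|_{Y_1}$, and since $\mu$ is an isomorphism over $\pi^{-1}(X_\cF)$ the fibre $Y_y$ is identified with $\bP(\CF_y)$, so that $h_L|_{Y_y}$ inherits the triviality $\CI(h_L^k|_{Y_y}) = \CO_{Y_y}$ for all $k$. Following \cite[V.3.23]{Nbook}, for a fixed ample $A$ on $X$ I would construct an effective $\bQ$-divisor $W$ with $\Supp(W)\subset Z$ and a rational $\ep_0 > 0$ with $f^\star A + \ep_0 L - W$ ample, and set $\sL_0 := m_0(f^\star A + \ep_0 L - W) - K_Y$, an ample line bundle for $m_0 \gg 0$.

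For part (1) I would show that the restriction morphism
\[
H^0\big(Y_1, (K_Y + \sL_0 + mL)|_{Y_1}\big) \longrightarrow H^0\big(Y_y, (K_Y + \sL_0 + mL)|_{Y_y}\big)
\]
is surjective for every $m \ge 0$. Endow $\sL_0 + mL$ with the metric whose weight is $\vph_0 + m\vph_L$, where $e^{-\vph_0}$ is a smooth positively curved metric on $\sL_0$ and $\vph_L$ is the weight of $h_L$; its curvature is semi-positive, while the hypothesis forces its restriction to $Y_y$ to have trivial multiplier ideal for all $m$. As in Theorem~\ref{Gp imply wp2}, $Y_1 \cong \bP(E^\star)$ carries a complete K\"ahler metric, so the Ohsawa--Takegoshi extension theorem \cite{OT} applies and yields the surjectivity. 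Pushing the extended sections down by $f$, exactly as in that proof, shows that global sections of $\what S^m\CF \otimes A^{m_0}$ generate its stalk at $y$ for $m \gg 0$; after taking suitable tensor powers this is precisely the weak positivity of $\CF$ at $y$.

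For part (2) the role of the extra hypothesis $\Th_g(\cO_\CF(1)) - \pi^\star\eta \ge 0$ on $\pi^{-1}(W)$ is to supply intrinsic positivity in the base directions that in part (1) had to be imported from the external ample twist $A^{m_0}$. Over $f^{-1}(W)$ the curvature of $L$ now dominates $f^\star\eta$ in the base directions, while the relative ampleness of $\cO_\CF(1)$ provides strict positivity along the fibres; hence $c_1(L)$ is represented over $f^{-1}(W)$ by a current bounded below by a K\"ahler form. Running the same $L^2$ extension over the whole family of fibres $Y_x$ for $x$ ranging in $W$ and keeping track of this base positivity, I would produce $\gtrsim m^{\dim Y}$ linearly independent sections of $mL$, so that $\Vol(\cO_\CF(1)) > 0$. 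Since $\dim\bP(\CF) = \dim X + \rank\CF - 1 = \dim Y$, this is exactly the bigness of $\cO_\CF(1)$ on $\bP(\CF)$, i.e.\ the bigness of $\CF$.

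The main obstacle is part (2): turning the \emph{local} strict positivity over $W$ into a \emph{global} maximal-growth estimate for sections of $mL$. Because $h_L$ is merely semi-positive away from $f^{-1}(W)$, one cannot simply invoke the characterization of big classes by a global K\"ahler current; instead the Ohsawa--Takegoshi extension must be carried out uniformly over a positive-dimensional family of fibres, with the strictly positive base factor $\pi^\star\eta$ controlling how many independent sections survive, and the resulting count must be matched against $\dim\bP(\CF)$. By comparison, part (1) is essentially the single-point specialization of Theorem~\ref{Gp imply wp2}, and there the only delicate verification is that the single-fibre triviality $\CI(g^k|_{\bP(\CF_y)}) = \CO_{\bP(\CF_y)}$ is precisely the integrability condition needed to apply \cite{OT}.
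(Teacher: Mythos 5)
Two preliminary remarks. First, the survey itself does not prove Theorem \ref{bigness}: it states it as a ``more complete form'' of Theorem \ref{Gp imply wp2} and refers to \cite{mpst} for the proof, so the only internal comparison available is with the proof of Theorem \ref{Gp imply wp2}. Second, your part (1) is correct and is essentially that proof verbatim: the two analytic inputs there (semi-positivity of the induced metric on $L=\mu^\star\cO_{\CF}(1)$, and triviality of the multiplier ideals $\cI\big(h_L^{\otimes m}|_{Y_y}\big)$, which in Theorem \ref{Gp imply wp2} came from the metric $h$ on $\cE$ and the condition $\det h(y)<\infty$) are here taken directly as hypotheses at the single point $y$, and the Ohsawa--Takegoshi argument on the complete K\"ahler manifold $Y_1$ goes through unchanged.

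Part (2), however, contains a genuine gap, and it is not merely the technical obstacle you flag: you are aiming at the wrong statement. The notion of bigness for a torsion-free sheaf used in \cite{mpst}, going back to Viehweg \cite{Vbook}, is that there exist an ample line bundle $A$ and an integer $a>0$ such that $\what S^{a}(\CF)\otimes A^{-1}$ is weakly positive. This is strictly stronger than $\Vol\big(\cO_{\CF}(1)\big)>0$, which is what your argument is designed to produce. For example, take $\CF=\cO_X\oplus A$ on a curve $X$ with $A$ ample: then $\cO_{\CF}(1)$ is big, since $h^0\big(X,S^m\CF\big)=\sum_{j\le m}h^0(X,A^j)$ grows like $m^2$ while $\dim\bP(\CF)=2$; but $\CF$ is not big in the above sense, because for every ample $A'$ and every $a$ the sheaf $\what S^{a}(\CF)\otimes A'^{-1}$ admits the quotient $A'^{-1}$, which is not weakly positive (Remark \ref{dps}), and weak positivity passes to quotients. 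Incidentally, this example also shows why the hypothesis in (2) is imposed on \emph{all} of $\pi^{-1}(W)$: the quotient $\CF\to\cO_X$ gives a section $\sigma$ of $\pi$ with $\sigma^\star\cO_{\CF}(1)=\cO_X$, and a metric $g$ as in (2) would induce on $\cO_X$ a metric with curvature $\ge 0$ everywhere and $\ge\eta$ near $y$, contradicting $\deg\cO_X=0$; so for this $\CF$ no such $g$ exists, consistently with the theorem.

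Once the correct target is in place, the real difficulty appears, and your sketch does not address it. Bigness requires global sections of sheaves of the form $\what S^{N}(\CF)\otimes A^{-k}$ (up to the fixed twist $A^{m_0}$) generating at $y$, with $k$ growing \emph{proportionally} to $N$; upstairs this means extending sections from $Y_y$ of the bundle $K_Y+\sL_0+NL-kf^\star A$ to all of $Y_1$. The natural metric on this bundle has curvature bounded below by $\Theta(\sL_0)+\pi^\star\big(N\eta-k\Theta(A)\big)\ge\Theta(\sL_0)$ only over $f^{-1}(W)$; outside $f^{-1}(W)$ one only knows $\Theta_g\ge 0$, so the curvature is merely $\ge\Theta(\sL_0)-k\,f^\star\Theta(A)$, and the fixed ample $\sL_0$ cannot absorb the negative term as $k\to\infty$. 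Your proposed fix --- ``running the $L^2$ extension over the whole family of fibres $Y_x$, $x\in W$'' --- is not a well-defined operation (Ohsawa--Takegoshi extends from a closed subvariety, not from an open union of fibres), and even implemented as extension with jets at $y$ it would only reprove bigness of $\cO_{\CF}(1)$, i.e.\ the insufficient statement. The missing idea --- converting strict positivity of $\Theta_g$ in the base directions over $\pi^{-1}(W)$ \emph{only} into global sections carrying a negative twist that grows linearly --- is exactly the content of part (2) in \cite{mpst}, and it is not a routine repetition of the argument for part (1).
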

\begin{remark} 
It does not seem to be know wether the weak positivity of $E$ implies the Griffiths semi-positivity 
in the sense of Definition \ref{sHm}
(this would be a singular version of Griffiths conjecture). 
\end{remark}
\medskip

\subsection{Curvature} An important ingredient in the theory of singular Hermitian 
line bundles is the \emph{curvature current}: as briefly recalled in subsection 2.1,
this is a $(1, 1)$-form with measure coefficients. Given a singular Hermitian 
vector bundle $(E, h)$, one would expect the curvature 
$\Theta_h(E)$ to be a (1,1)--form with measure
coefficients and values in $\End(E)$ at least when $E$ is, say, semi-negatively curved.  
Unfortunately, this is not the case, as the next example due to H. Raufi shows it. 
\begin{example} 
Following \cite[Theorem 1.3]{Raufi1}, let 
$E=\bC \times \bC^2$ be the trivial bundle of rank two on $\bC$. 
We consider the metric on $E$ given by the following expression
$$
h=\begin{pmatrix}
1+|z|^{2}& z \\
\ol z& |z|^{2}
\end{pmatrix}
$$
Then the coefficients of the Chern connection $\partial_E:= h^{-1}\partial h$ \emph{are not} in $L^1_{\rm loc}$ near the origin, and thus the curvature cannot be defined as a vector valued (1,1)-current with measure coefficients.
\end{example}
\smallskip

\noindent The following result is a particular case of \cite{Raufi1}; it gives a sufficient criteria 
in order to define 
the notion of curvature current associated to $(E, h)$ which is very useful.

\begin{theorem} {\rm (\cite[Thm 1.6]{Raufi1})}\label{singsing} 
Let $(E, h)$ be a positively curved singular Hermitian vector bundle of rank $r$. 
We denote by $h_\ep$ the sequence of metrics on $E$ obtained by approximation as in 
\eqref{eq28}. The following assertions hold true.
\begin{enumerate}
\smallskip

\item[(i)] We assume that $(E, h)$ is semi-negatively curved, and that $\det h\geq \ep_0> 0$
for some positive real number $\ep_0$. Then the $L^2$ norm of the connection form 
$\displaystyle h_\ep^{-1}\partial h_\ep$ is uniformly bounded (with respect to $\ep$), and
we have
\begin{equation}\label{eq40}
h_\ep^{-1}\partial h_\ep\to h^{-1}\partial h
\end{equation}
in weak sense as $\ep\to 0$. In particular, the coefficients of the connection form 
$\displaystyle h^{-1}\partial h$ belong to $L^2$, so we can define the curvature  
\begin{equation}\label{eqq41}
\Theta_h(E):= \dbar \big(h^{-1}\partial h\big)
\end{equation}  
in the sense of currents. Moreover, the coefficients of the curvature form $\displaystyle \Theta_{h_\ep}(E)$
have a uniform $L^1_{\rm loc}$ bound, and we have 
\begin{equation}
\Theta_{h_\ep}(E)\to \Theta_h(E)
\end{equation} 
in weak sense, as $\ep\to 0$. Therefore, the current $\Theta_h(E)$ is of order zero, 
i.e. it has measure coefficients, and it is positive in the sense of Griffiths. 
\smallskip

\item[(ii)] We assume that $(E, h)$ is positively curved, and that $\det h\leq 
\ep_0^{-1}< \infty$. Then the same conclusion as in {\rm (i)} holds.
\end{enumerate}
 
\end{theorem}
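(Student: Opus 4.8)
The plan is to prove this by approximation, reducing the singular case to Raufi's underlying smooth computation. Since $(E,h)$ is positively curved, Lemma \ref{appr} provides a sequence of smooth Hermitian metrics $h_\ep$ increasing to $h$ with Griffiths-positive curvature tensors, and the explicit convolution formula \eqref{eq28} makes these concretely available. The key observation is that the hypothesis on $\det h$ in each case is exactly what controls the eigenvalues of $h$ (hence of $h_\ep$) uniformly away from zero and infinity: in case (i), semi-negativity forces the coefficients $h_{i\ol j}$ to be locally bounded above (third bullet of Remark \ref{bounded}), while $\det h \geq \ep_0 > 0$ then bounds the eigenvalues below; in case (ii), positivity bounds the coefficients of ${}^t h^{-1}$ above and $\det h \leq \ep_0^{-1}$ bounds them in the other direction. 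These two-sided spectral bounds pass to the approximants $h_\ep$.

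First I would treat case (i) in detail. The goal is a uniform $L^2$ bound on the connection forms $h_\ep^{-1}\partial h_\ep$. The strategy is to estimate $\partial h_\ep$ and $h_\ep^{-1}$ separately. The inverse $h_\ep^{-1}$ is uniformly bounded by the lower eigenvalue bound $\det h_\ep \geq \ep_0$ combined with the upper bound on the coefficients. For $\partial h_\ep$, I would invoke the curvature identity for a smooth Griffiths-negative metric: writing $\Theta_{h_\ep}(E) = \dbar(h_\ep^{-1}\partial h_\ep)$ and using that Griffiths semi-negativity makes the associated Hermitian form on $T_X \otimes E$ have a sign, one obtains a pointwise inequality bounding $|\partial h_\ep|^2$ (measured in $h_\ep$) by a combination of $\partial\dbar$ of the bounded coefficients $h_{i\ol j,\ep}$ and curvature terms. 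Integrating against a test form and using the uniform coefficient bounds yields the $L^2$ estimate independent of $\ep$. Concretely, one exploits that for a negatively curved bundle the quantity $\partial\dbar |s|^2_{h_\ep}$ for constant sections $s$ dominates $|\partial s|$-type terms, giving Cauchy-Schwarz control.

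Once the uniform $L^2$ bound is secured, the weak convergence \eqref{eq40} follows from weak-$\ast$ compactness in $L^2$ together with the monotone pointwise convergence $h_\ep \to h$ and $h_\ep^{-1} \to h^{-1}$ (the latter justified by the uniform spectral bounds, which make matrix inversion continuous), identifying the weak limit as $h^{-1}\partial h$. Then $\Theta_h(E) := \dbar(h^{-1}\partial h)$ is well-defined as a current, and its positivity in the sense of Griffiths is inherited from the Griffiths positivity of each $\Theta_{h_\ep}(E)$, which is preserved under weak limits of positive currents. The uniform $L^1_{\rm loc}$ bound on $\Theta_{h_\ep}(E)$ then follows because a Griffiths-positive (1,1)-current with measure coefficients has its total mass controlled by its trace, which is itself controlled by integrating the connection forms; positivity upgrades weak convergence to convergence of measures, so the limit current has order zero. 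Case (ii) reduces to case (i) by duality: the dual metric $h^\star = {}^t h^{-1}$ on $E^\star$ is semi-negatively curved (Definition \ref{curv}(2)), and the hypothesis $\det h \leq \ep_0^{-1}$ translates to $\det h^\star \geq \ep_0$, so case (i) applies to $(E^\star, h^\star)$; the curvature of $(E,h)$ is recovered via $\Theta_h(E) = -{}^t\Theta_{h^\star}(E^\star)$.

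The main obstacle I anticipate is the uniform $L^2$ bound on the connection forms in case (i). The difficulty is that a naive bound on $\partial h_\ep$ alone is not available — indeed, Raufi's Example above shows that for a general (non-semidefinite-determinant) semi-negative metric the connection coefficients need not even be in $L^1_{\rm loc}$. The role of the hypothesis $\det h \geq \ep_0$ is precisely to prevent this degeneration, and extracting the $L^2$ estimate requires using the curvature sign rather than just the boundedness of the metric coefficients. The delicate point is to make the pointwise curvature inequality yield an integrated estimate that is genuinely uniform in $\ep$; this is where the interplay between the convolution structure of \eqref{eq28} and Griffiths negativity must be exploited carefully, and it is the technical heart of the argument.
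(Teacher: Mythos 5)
Your proposal is correct and follows essentially the same route as the paper: reduce (ii) to (i) by duality, obtain the uniform $L^2$ bound from the curvature identity $\sqrt{-1}\ddbar|v|^2_{h_\ep}=-\langle\Theta_{h_\ep}(E)v,v\rangle_{h_\ep}+\sqrt{-1}\langle D'v,D'v\rangle_{h_\ep}$ applied to constant sections together with the sign of the curvature term and the uniform sup bounds on $|v|^2_{h_\ep}$, then pass to the weak limit and control the curvature coefficients by the trace plus the Griffiths sign and polarization. The only nuance worth noting is that in the paper the bound $\Vert\partial h_\ep\Vert^2\leq C$ is obtained \emph{without} the hypothesis $\det h\geq \ep_0$, which enters only in the identification of the weak limit $h_\ep^{-1}\partial h_\ep\to h^{-1}\partial h$ (via control of $h_\ep^{-1}$), exactly the step you also defer to the spectral bounds.
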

\medskip

\noindent We provide here a few explanations about the statement \ref{singsing}. The fact that 
$\displaystyle \Theta_{h}(E)$ is a matrix-valued $(1,1)$-current of order zero 
means that locally on 
some coordinate set $U$ centered at some point
$x\in X$ we have
\begin{equation}\label{singcurv}
\Theta_{h}(E)|_U= \sqrt{-1}\sum_{j, k, \alpha,\beta}\mu_{j\ol k\alpha\ol \beta}dz^j\wedge dz^{\ol k}
\otimes e_\alpha^\star\otimes e_\beta
\end{equation}
where $\displaystyle \mu_{j\ol k\alpha\ol \beta}$ are \emph{measures} on $U$ (rather than smooth functions as in the 
classical case), $(e_\alpha)_{\alpha=1,\dots , r}$ is a local holomorphic frame of $E$ and $(z^i)_{i=1,\dots ,n}$ are local coordinates. 
The positivity in the sense of Griffiths we are referring to in Theorem \ref{singsing}
means that for any local holomorphic vector field $\displaystyle \sum v^j\frac{\partial}{\partial z^j}$ and 
for any local holomorphic section $\displaystyle \sum \xi^\alpha e_\alpha$, the measure
\begin{equation}\label{measure}
\sum \mu_{j\ol k\alpha\ol \beta}v^j\ol{v^k}\xi^\alpha\ol {\xi^\beta}
\end{equation}
is (real and) positive on $U$. 

\noindent Also, we remark that the equality \eqref{eqq41} \emph{does not} imply automatically that $\Theta_h(E)$ has measure coefficients--it is the last part of 
Theorem \ref{singsing} who shows that this is the case.

\begin{proof}
We will only sketch here the main steps of the proof, and we refer to 
\cite{Raufi1} for a complete argument. The first remark is that it is enough to 
prove the statement (i), because the second part (ii) is a direct consequence of (i) applied to the 
dual bundle $(E^\star, h^\star:= {}^th^{-1})$.

Let $v$ be a local section of $E$, defined on a coordinate open set 
$\Omega$. We have the equality
\begin{equation}\label{eq161401}
\sqrt{-1}\ddbar |v|_{h_\ep}^2= -\langle\Theta_{h_\ep}(E)v, v \rangle_{h_\ep}+
\sqrt{-1}\langle D^\prime v, D^\prime v \rangle_{h_\ep} 
\end{equation} 
where $D^\prime$ in \eqref{eq161401} is the (1,0) part of the Chern connection associated to $(E, h_\ep)$. Since by hypothesis the Hermitian bundle $(E, h)$ is semi-negatively defined, the family of psh functions $\displaystyle \big(|v|_{h_\ep}^2\big)_{\ep> 0}$ is bounded in $L^1_{\rm loc}$. Actually, we know more than that: for any relatively compact 
subset $\Omega^\prime\subset \Omega$ there exists a constant $C> 0$ such that 
\begin{equation}
\sup_{\Omega^\prime}|v|_{h_\ep}^2\leq C
\end{equation}
for any positive $\ep$. By hypothesis, the form $\displaystyle 
\langle\Theta_{h_\ep}(E)v, v \rangle_{h_\ep}$ is semi-positive, so we infer that we have
\begin{equation}\label{eq161402}
\int_{\Omega^\prime}\sqrt{-1}\langle D^\prime v, D^\prime v \rangle_{h_\ep}\wedge \rho\leq C
\sup_\Omega|\rho| 
\end{equation}
for any test form $\rho$, where the constant $C$ above is independent of $\ep$. Given that 
we have
$$D^\prime= \partial - h_\ep^{-1}\partial h_\ep\wedge $$ locally, the relation 
\eqref{eq161402} together with the fact that $|h_{\ep i\ol j}|\leq C$ show that we have 
\begin{equation}\label{eq161403}
\Vert \partial h_\ep\Vert^2\leq C
\end{equation} 
for some constant $C> 0$, where $\Vert \cdot\Vert$ above is the operator 
(Hilbert-Schmidt) norm of the corresponding endomorphism. 

So far we have not used the hypothesis $\det h\geq \ep_0$; it comes into the picture in the proof of the fact that we have 
\begin{equation}\label{eq161404}
 h_\ep^{-1}\partial h_\ep\to  h^{-1}\partial h
\end{equation}
as $\ep\to 0$ in the sense of distributions. This is done by evaluating 
separately the quantities $\displaystyle 
h_\ep^{-1}\partial h_\ep-  h^{-1}_\ep\partial h$ and 
$\displaystyle 
h_\ep^{-1}\partial h-  h^{-1}\partial h$.
The justification of \eqref{eq161404} will not be reproduced here, we will rather refer to cf.\ \cite{Raufi1}, 373- 375.
\smallskip

\noindent We already know that the coefficients of the trace 
\begin{equation}
\displaystyle \tr \Theta_{h_\ep}(E)
\end{equation}
are uniformly bounded in $L^1_{\rm loc}$ (with respect to $\ep$); by 
Griffiths negativity hypothesis, the same is true for the coefficients of 
$\displaystyle \Theta_{h_\ep}(E)$, as we shall see next.
We consider 
\begin{equation}\label{161405}
\Theta_{h_\ep}(E)|_U= 
\sqrt{-1}\sum_{j, k, \alpha,\beta}-\mu^{(\ep)}_{j\ol k\alpha\ol \beta}
dz^j\wedge dz^{\ol k}\otimes e_\alpha^\star\otimes e_\beta
\end{equation}
the local expression of the curvature of $(E, h_\ep)$. We know that
\begin{equation}\label{161406}
\big\Vert \sum_{\alpha, \beta} \mu^{(\ep)}_{j\ol k\alpha\ol \beta}h_\ep^{\alpha\ol\beta}\big\Vert_{L^1(\Omega^\prime)}\leq C
\end{equation}
for any $(j, k)$, uniformly with respect to $\ep$. In the expression \eqref{161406} we denote by 
$\displaystyle h_\ep^{\alpha\ol\beta}$ the coefficients of the inverse matrix corresponding to $h_\ep$. We note that 
the eigenvalues of $h_\ep$ are uniformly bounded from below.
Hence if we write \eqref{161406}
for $j= k$, then we obtain
\begin{equation}\label{161407}
\Vert\mu^{(\ep)}_{k\ol k\alpha\ol \alpha}\Vert_{L^1(\Omega^\prime)}\leq C
\end{equation}
for any $k, \alpha$, as a consequence of Griffiths negativity hypothesis. The same kind of arguments imply first that $\displaystyle \Vert\mu^{(\ep)}_{j\ol k\alpha\ol \alpha}\Vert_{L^1(\Omega^\prime)}\leq C$ for any $(j, k)$ and any $\alpha$, and finally that 
\begin{equation}\label{161408}
\Vert\mu^{(\ep)}_{j\ol k\alpha\ol \beta}\Vert_{L^1(\Omega^\prime)}\leq C
\end{equation}
for any indexes $j, k, \alpha$ and $\beta$. The convergence of 
$\displaystyle \Theta_{h_{\ep}}(E)$ towards $\Theta_h(E)$ shows that the current
 $\Theta_h(E)$ has \emph{measure} coefficients, which is what we wanted to prove. 
\end{proof}

\medskip

\noindent We have the following 
consequence of the previous results.

\begin{corollary}\label{det} \cite{jcmp} Let $(\cE, h)$ be a positively curved singular Hermitian 
sheaf, such that whose restriction to $W_{\cE}$ is a vector bundle of rank $r$, so that $\displaystyle \codim X\setminus W_{\cE}\geq 2$. The determinant line bundle 
$$\displaystyle \det \cE$$ admits a singular hermitian metric whose 
curvature current $\Theta$ is positive. Moreover, we have the following statements.
\begin{enumerate}

\item[(a)] If $\Theta$ is non-singular when restricted to some open subset 
$\Omega\subset W_{\cE}$, then the curvature
current of $\displaystyle \cE|_\Omega$ is well-defined.
\smallskip

\item[(b)] If $\Theta$ vanishes when restricted to the open subset
$\Omega^\prime\subset W_{\cE}$, then so does the full curvature tensor corresponding to $\displaystyle \cE$. In this case the metric $h|_{\Omega^\prime}$ is smooth.
\end{enumerate}

\end{corollary}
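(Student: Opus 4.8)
The plan is to dispatch the three claims in order, drawing on Lemma \ref{tensors} for the existence of $\Theta$, on Raufi's criterion \ref{singsing} for the well-definedness in (a), and on a trace computation plus elliptic regularity for (b). The existence of $\Theta$ is essentially Lemma \ref{tensors}: over $W_\cE$ the sheaf restricts to a genuine positively curved vector bundle $(E,h)$, and the last assertion of that lemma furnishes on $\det E$ the metric $\det h$ with psh weight $\varphi_h := -\log\det h$ and positive curvature current. Since $\codim(X\setminus W_\cE)\ge 2$, this weight extends across $X\setminus W_\cE$ exactly as in Remark \ref{dual}, yielding a singular metric on the line bundle $\det\cE$ with positive curvature current $\Theta = \sqrt{-1}\ddbar\varphi_h$.

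For (a) I would apply Theorem \ref{singsing}(ii), whose hypotheses are that $(E,h)$ be positively curved --- automatic on $\Omega\subset W_\cE$ --- together with a local upper bound $\det h\le \ep_0^{-1}$. The non-singularity of $\Theta$ is precisely what delivers the missing bound: on $\Omega$ one has $\Theta = \sqrt{-1}\ddbar\varphi_h$ with $\varphi_h$ psh, so if $\Theta$ has locally bounded coefficients then the distributional Laplacian of $\varphi_h$ is locally bounded, and elliptic regularity forces $\varphi_h\in W^{2,p}_{\mathrm{loc}}$ for every $p<\infty$, hence continuous. Thus $\det h = e^{-\varphi_h}$ is locally bounded above on $\Omega$, and Theorem \ref{singsing}(ii), applied on relatively compact subsets, shows that the curvature current $\Theta_h(\cE|_\Omega)$ is well-defined, of order zero and Griffiths positive.

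For (b), vanishing of $\Theta$ in particular makes it non-singular, so by (a) the current $\Theta_h(\cE|_{\Omega'})$ exists with measure coefficients $\mu_{j\ol k\alpha\ol\beta}$, Griffiths positive in the sense of \eqref{measure}, while $\varphi_h$ is now pluriharmonic, hence smooth, so $\det h$ is smooth. The determinant curvature $\Theta$ is the trace $\sum_\alpha\mu_{j\ol k\alpha\ol\alpha}$; its vanishing gives $\sum_\alpha\mu_{j\ol j\alpha\ol\alpha}=0$, and since each summand is $\ge 0$ (take $v=e_j$, $\xi=e_\alpha$ in \eqref{measure}), every $\mu_{j\ol j\alpha\ol\alpha}$ vanishes. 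Writing all the $\mu$'s as densities against a common dominating positive measure, I would run the pointwise argument almost everywhere: for fixed $v$ the Hermitian form $\xi\mapsto\sum\mu_{j\ol k\alpha\ol\beta}v^j\ol{v^k}\xi^\alpha\ol{\xi^\beta}$ is positive semidefinite and vanishes on every $e_\alpha$, so its radical is the whole space and the form is zero; polarizing then in $v$ shows all densities vanish a.e., i.e. the full curvature tensor of $\cE|_{\Omega'}$ is zero. Smoothness of $h$ follows by bootstrapping: Theorem \ref{singsing}(ii) gives $\theta := h^{-1}\partial h\in L^2_{\mathrm{loc}}$, and $\dbar\theta = \Theta_h(\cE|_{\Omega'})=0$ forces $\theta$ to be holomorphic by hypoellipticity of $\dbar$; from $\partial h = h\theta$ and its adjoint $\dbar h = \theta^\star h$, with $h,h^{-1}$ locally bounded by Remark \ref{bounded}, a standard bootstrap yields $h\in C^\infty$ on $\Omega'$.

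I expect the main obstacle to be the measure-theoretic implementation of the linear-algebra step in (b): transferring the pointwise fact ``Griffiths-semipositive with vanishing trace $\Rightarrow$ zero'' to currents with measure coefficients requires the genuine positivity of $\Theta_h$ in the sense of currents --- which is exactly what Theorem \ref{singsing} secures --- together with a careful Radon--Nikodym reduction to a single dominating measure. A secondary delicate point is calibrating the regularity input in (a) so that the precise meaning of ``$\Theta$ non-singular'' does yield the local upper bound on $\det h$ demanded by Raufi's criterion.
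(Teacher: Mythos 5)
Your proposal is correct and follows essentially the same route as the paper: Lemma \ref{tensors} plus psh extension across codimension two for the existence of $\Theta$; Raufi's Theorem \ref{singsing}(ii), fed by the bound on $\det h$ extracted from the regularity of the determinant weight, for (a); and the trace identity combined with Griffiths positivity and polarization, followed by holomorphy of the connection form and an elliptic bootstrap, for (b). The one imprecision is your appeal to Remark \ref{bounded} for local boundedness of $h$ itself --- for a positively curved metric that remark only bounds the coefficients of ${}^t h^{-1}$, and boundedness of $h$ must then be deduced by combining this with the smoothness of $\det h$ (adjugate formula), which is exactly how the paper argues and which your own text already has available since you established $\det h$ smooth beforehand.
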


\begin{proof} 
The metric $h$ induces a metric $\det h$ on the determinant bundle 
$\displaystyle \det \cE|_{W_{\cE}}$.
whose curvature is semi-positive. 
It is well-known that psh functions extend across sets of codimension at least two, hence the the first part of the 
corollary follows. 
\smallskip

The statement (a) is a direct consequence of Theorem \ref{singsing}, 
because the metric induced on the 
determinant bundle on $\Omega$ is \emph{smooth}, by standard regularity results. 

\noindent As for part (b), we use 
Theorem \ref{singsing} again, and it implies that the restriction of the curvature current corresponding to
$\displaystyle \cE|_{\Omega^\prime}$ is well-defined.
We establish its vanishing next; as we will see, it is a consequence of 
the positivity in the sense of Griffiths of the curvature of 
$\displaystyle \cE$, 
combined with the fact that its trace $\Theta$ is equal to zero.
We remark at this point that is really important to have at our disposal the {curvature current} 
as given by Theorem \ref{singsing}, and not only the positivity in the sense of Griffiths.

A by-product of the proof of Theorem \ref{singsing} (cf. \cite[Remark 4.1]{Raufi1}) 
is the fact that the curvature current $\Theta$ of $\det E$ is simply the trace of the matrix-valued 
current $\displaystyle \Theta_{h}(E)$. By using the notations \eqref{singcurv}, this is equivalent to the fact that
\begin{equation}\label{mp1}
 \sum_{j, k}\sum_\alpha\mu_{j\ol k\alpha\ol \alpha}dz^j\wedge dz^{\ol k}= 0.
\end{equation}
Since $\displaystyle \Theta_{h}(\cE)$ is assumed to be positive in 
the sense of Griffiths, we infer that the current  
\begin{equation}\label{mp2}
 \sum_{j, k, \alpha}\mu_{j\ol k\alpha\ol \alpha}dz^j\wedge dz^{\ol k}
\end{equation}
is positive \emph{for each index $\alpha$}. When combined with \eqref{mp1}, this implies that
\begin{equation}\label{mp3}
\mu_{j\ol k\alpha\ol \alpha}\equiv 0
\end{equation} 
for each $j, k, \alpha$. But then we are done, since the positivity of 
$\displaystyle \Theta_{h}(\cE)$ together with \eqref{mp3} shows that for each pair of indexes $\alpha, \beta$
we have \begin{equation}\label{mp4}
\Re\Big(\xi^\alpha\xi^{\ol \beta}\sum_{j, k}\mu_{j\ol k\alpha\ol \beta}v^j v^{\ol k}\Big)\geq 0
\end{equation} 
(notations as in \eqref{measure}) which in turn implies that $\displaystyle \mu_{j\ol k\alpha\ol \beta}\equiv 0$
for any $j, k, \alpha, \beta$. 
The current $\displaystyle \Theta_{h}(\cE)|_{\Omega^\prime}$ is therefore identically zero.
\smallskip

\noindent The regularity statement is verified as follows. In the first place we already know that the coefficients of $h$
are bounded. This follows thanks to relation \eqref{??} which implies that the absolute value of the coefficients of the dual metric $h^\star$ is bounded from above, combined with the fact that 
the determinant $\det h$ is smooth. 

Since $\dbar$ of the connection form (= curvature current) is equal to zero,
it follows that the connection is smooth. Locally near a point of $\Omega^\prime$ we therefore have
\begin{equation}\label{eq4011}
\partial h= h\cdot \Psi
\end{equation}
where $\Psi$ is smooth. The relation \eqref{eq4011}
holds in the sense of distributions; by applying the $\dbar$ operator to it, we see 
that $h$ satisfies an elliptic equation. In conclusion, it is smooth. \end{proof}

\section{Metric properties of direct images}

\medskip

\noindent Let $p: X\to Y$ be an algebraic fiber space: by definition this
means that $X$ and $Y$ are non-singular and that $p$ is a projective map 
with connected fibers. The \emph{relative canonical bundle} 
$\displaystyle K_{X/Y}$ corresponding to the map $p$ is 
\begin{equation}
K_{X/Y}= K_X- p^\star K_Y
\end{equation}
where $K_X:= \wedge^{\dim X}\Omega_X$. 
When restricted to a generic fiber $X_y$
of $p$, the bundle $K_{X/Y}$ is identifies naturally with the canonical bundle 
$\displaystyle K_{X_y}$. Moreover, many results/conjectures 
are supporting the idea that the variation of the complex structures of 
the fibers of $p$ is reflected into the positivity properties of the relative canonical bundle. Therefore, it is of fundamental importance 
to study the algebraic and metric properties of this bundle, respectively.
In practice on always has to deal with 
the twisted version
\begin{equation}\label{msri1}
K_{X/Y}+ L
\end{equation}
of the bundle above,
where $L\to X$ is a line bundle endowed with a metric $h_L$ whose curvature 
current is semi-positive (actually, $L$ will be a $\bQ$-line bundle, but 
let us ignore that for the moment).

\noindent One way of studying the properties of the bundle \eqref{msri1} 
is via the direct image sheaf
\begin{equation}\label{msri2}
\cE:= p_\star(K_{X/Y}+ L)
\end{equation} 
on $Y$. In the next subsections we will analyze the metric properties of \eqref{msri2} in increasing degrees of generality. 

\subsection{A natural metric on $p_\star(K_{X/Y}+ L)$} To start with, we assume that 
$p:X\to Y$ is a smooth proper 
fibration 
i.e. a submersion between a K\"ahler manifold $X$ of
of dimension $m+n$, and a complex $m$-dimensional manifold $Y$ (which can be simply the unit 
ball in the Euclidean space $\bC^m$ for our immediate purposes). Let $(L, h_L)\to X$ be a holomorphic 
line bundle endowed with a \emph{non-singular} Hermitian metric $h_L$ whose curvature 
form is semi-positive. 

Given a point $y\in Y$, any section $\displaystyle u\in H^0\big(X_y, K_{X_y}+ L|_{X_y}\big)$ extends, in the sense that there exists an open coordinate set $y\in \Omega\subset Y$ and a section
\begin{equation}\label{msri3}
U\in H^0\big(p^{-1}\Omega, K_{X/Y}+ L|_{p^{-1}\Omega}\big)
\end{equation}
such that $\displaystyle U|_{X_y}= u\wedge dt$ (here we abusively denote by $dt$ the 
inverse image of 
a local generator $dt_1\wedge\dots \wedge dt_m$ of $K_Y$). This is a consequence of the OT theorem, cf.\ \cite{}, and it is at this point that the K\"ahler assumption for $X$ is used. 
In any case, we infer that the complex manifold
\begin{equation}
E:= {\cup}_{y\in Y}H^0\big(X_y, K_{X_y}+ L|_{X_y}\big)
\end{equation}
(which equals the total space of the direct image of $K_{X/Y}+ L$)
has a structure of vector bundle of rank $r:= h^0\big(X_y, K_{X_y}+ L|_{X_y}\big)$ over the base $Y$.
So the space of local -smooth- sections of $E|_{\Omega}$ are simply the sections of the bundle
$\displaystyle  K_{X/Y}+ L|_{p^{-1}\Omega}$ whose restriction to each fiber of $p$ is holomorphic. 
\smallskip

\noindent The vector bundle $E= p_\star(K_{X/Y}+ L)$ admits a natural \emph{complex structure}
which we now recall. Let $u$ be a section of $E$; then $u$ is holomorphic if 
\begin{equation}\dbar u\wedge dt= 0.
\end{equation}
 This is equivalent to saying that the section $u\wedge dt$ of the bundle
$\displaystyle  K_{X}+ L|_{p^{-1}\Omega}$ is holomorphic (in the usual sense).
\smallskip

\noindent The holomorphic bundle $E$ can be endowed with a \emph{Hermitian metric}, as follows. Let $u, v$ be two 
sections of $E$. We denote by $(u_y)$ the family of $L$-twisted holomorphic $(n, 0)$ forms on fibers of $p$ induced by $u$. Then the scalar product of $u$ and $v$ is given by
\begin{equation}\label{msri4}
\langle u, v\rangle_y:= \int_{X_y}c_nu_y\wedge \ol{v_y}e^{-\vph_L},
\end{equation}
where $c_n:= (-1)^{n^2/2}$ is the usual unimodular constant.
We denote by $h_E$ the $L^2$ metric on $E$ defined by \eqref{msri4} (in the article 
\cite{mpst} this is called Narasimhan-Simha metric). 
\medskip

\noindent The curvature of the Hermitian vector bundle $(E, h_{E})$ was 
computed by Berndtsson in \cite{B}; his impressive result states as follows.

\begin{theorem}\cite{B}
The Chern curvature form $\displaystyle \Theta_{h_{E}}(E)$ is positive 
in the sense of Griffiths. 
\end{theorem}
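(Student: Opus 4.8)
This is Berndtsson's foundational curvature formula, so I would reconstruct his computation rather than invoke a soft argument. First I would reduce to a one‑dimensional base. Griffiths semi‑positivity of $(E,h_E)$ is the pointwise assertion that $\langle i\Theta_{h_E}(E)v,v\rangle\ge 0$ as a $(1,1)$-form for every fixed $v\in E_y$; restricting the fibration to a holomorphic disc $\iota:\Delta\to Y$ through $y$ tangent to a given direction $\xi$, and using the standard functoriality of the Chern curvature under pullback (the smooth analogue of Lemma \ref{pullback}), it suffices to treat $m=1$, $Y=\Delta$ with coordinate $t$. Over the disc I would choose a local holomorphic section $u$ of $E$ with $u(0)=v$ and $(D'u)(0)=0$ (a geodesic frame for $h_E$ at $0$), so that at the central point the curvature is computed by the scalar Hessian: at $t=0$ one has $\langle i\Theta_{h_E}(E)u,u\rangle=-i\ddbar\|u\|^2$. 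Thus the whole problem becomes showing that $\|u\|^2$ is superharmonic at $0$, i.e. controlling the second derivative of the fibre integral \eqref{msri4}.

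\noindent The heart of the matter is differentiating that fibre integral. The section $u$ corresponds to a holomorphic section $U$ of $K_{X/Y}+L$ over $p^{-1}(\Delta)$, with fibrewise restrictions $u_t$ the $L$-twisted $(n,0)$-forms, and $\|u\|^2(t)=\int_{X_t}c_n\,u_t\wedge\overline{u_t}\,e^{-\vph_L}$. Because $X$ is K\"ahler I would fix a K\"ahler metric $\omega$ on the total space and use it to split $TX$ into horizontal and vertical parts, producing a $(1,0)$ horizontal lift $V$ of $\partial/\partial t$. Differentiating $\|u\|^2$ twice in $t$ must account for the variation of both the integrand (the weight $\vph_L$ and the form $u_t$) and the complex structure of $X_t$; carrying the derivatives through as Lie derivatives along $V$ and $\overline V$ produces three kinds of terms: one weighted by $(i\ddbar\vph_L)(V,\overline V)$, one recording the failure of $V$ to be holomorphic — namely the Kodaira--Spencer class $\kappa_t=[\dbar V]\in H^{0,1}(X_t,T_{X_t})$ acting on $u_t$ — and curvature/torsion terms of $\omega$ that are reorganized away by integration by parts on the fibre.

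\noindent The goal of the bookkeeping is to arrive at Berndtsson's identity presenting the curvature as a sum of manifestly non‑negative contributions, schematically
\begin{equation}
\langle i\Theta_{h_E}(E)u,u\rangle
= \int_{X_t}(i\ddbar\vph_L)(V,\overline V)\,c_n\,u_t\wedge\overline{u_t}\,e^{-\vph_L}
+ \big\|\mathbf{H}\,(\text{fibre }\dbar\text{-term})\big\|^2 ,
\end{equation}
where $\mathbf{H}$ denotes harmonic projection on $L$-valued forms on the fibre. The first term is $\ge 0$ precisely because the curvature of $(L,h_L)$ is semi‑positive, i.e. $i\ddbar\vph_L\ge0$; the second term is a squared $L^2$-norm (the ``geodesic curvature'' of the fibration) and is visibly $\ge 0$. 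Superharmonicity of $\|u\|^2$, hence Griffiths semi‑positivity, then follows.

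\noindent The step I expect to be the main obstacle is deriving that clean decomposition — not the final sign count. Differentiating an integral over a family with varying complex structure yields mixed terms of a priori indefinite sign, and the potentially bad $\dbar$-contributions must be converted into the manifestly non‑negative norm above. This conversion is exactly where fibrewise Hodge theory enters: one decomposes the relevant $L$-valued forms on $X_t$ into harmonic and $\dbar$-exact parts and applies the Bochner--Kodaira--Nakano identity together with integration by parts, and it is here that both hypotheses are essential — the K\"ahler structure on $X$ makes the Hodge machinery and the $\dbar$-closedness of the horizontal lift (up to $\kappa_t$) available, while smoothness and semi‑positivity of $h_L$ guarantee the weighted terms have the right sign. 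A useful simplification I would exploit is that $u_t$ has top holomorphic degree $n$ on the fibre, so the $\dbar$-analysis of $u_t$ stays in low degree and the harmonic theory is clean.
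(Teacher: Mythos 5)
You should first note that the paper does not prove this theorem at all: it is stated as a quotation from Berndtsson's article \cite{B}, with the remark that the result there is even stronger (Nakano positivity). So your proposal can only be measured against Berndtsson's own argument. Measured that way, your skeleton is faithful: the reduction to a one-dimensional base by restricting to discs (legitimate for Griffiths positivity, since the $L^2$ metric of the restricted family is the restriction of $h_E$ and curvature commutes with pullback), the normal-frame reduction of Griffiths semi-positivity to superharmonicity of $t\mapsto\|u\|^2$ for holomorphic sections with $D'u(0)=0$, the second variation of the fibre integral \eqref{msri4}, and a final identity of the shape ``weight-curvature term plus a square'' are indeed the ingredients of \cite{B}.

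The genuine gap is that the identity which carries the whole theorem is postulated, not derived, and the one concrete analytic choice you commit to would not produce it. If the horizontal lift $V$ is taken with respect to an arbitrary auxiliary K\"ahler metric $\omega$, then differentiating the fibre integral produces cross terms coupling $(i\ddbar\vph_L)(V,\bar V)$ to the Kodaira--Spencer term $\dbar V$, together with terms quadratic in the fibrewise $(n-1,1)$-component of $\dbar\hat u$, and none of these has a definite sign; they are not removed by integration by parts. In \cite{B} the cancellation comes from a specific choice: either the lift determined by $i\ddbar\vph_L$ itself (whose ``geodesic curvature'' $c(\vph_L)$ is the correct first term), which degenerates when $h_L$ is merely semi-positive and therefore forces a perturbation of the weight followed by a limiting argument, or equivalently a representative $\hat u$ of the section normalized through the fibrewise Hodge decomposition so that the mixed terms vanish and the residual term becomes the square you write---this is exactly where the K\"ahler hypothesis and the Bochner--Kodaira/Hodge--Riemann machinery are consumed. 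You correctly flag this step as ``the main obstacle'', but deferring it means your text is a road map of Berndtsson's proof rather than a proof: what is left out is not bookkeeping, it is precisely the theorem.
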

\noindent Actually, the theorem in \cite{B} is much more complete, establishing the positivity of the curvature in the sense of Nakano but we will not need this 
strong version here. We will discuss next a generalization of this result.
\medskip

\begin{notations} We consider the following context.
Let $p:X\to Y$ be an algebraic fiber space, and let $(L, h_L)$ be 
a singular Hermitian line bundle, whose curvature current is semi-positive. 
In this set-up, the direct image $\cE$ defined in \eqref{msri2} may not be a 
vector bundle anymore, but nevertheless $\cE$ is a torsion-free coherent sheaf.
Let 
\begin{equation}\label{msri7}
Y_{\cE}\subset Y
\end{equation}
be the largest subset of $Y$ for which the restriction of $\cE$ to $Y_{\cE}$ is locally  free. We note that $\codim (Y\sm Y_{\cE})\geq 2$, since $\cE$ is torsion-free. 

\noindent Our immediate goal in what follows will be to explain the construction of the metric $h_{\cE}$
in this more general setting.


We consider the maximal Zariski open subset $Y_0\subset Y$ such that the induced map
\begin{equation}\label{msri5}
p:X_0\to Y_0
\end{equation}
is a submersion, where $X_0:= p^{-1}(Y_0)$, and let $\Dl:=Y\sm Y_{0}$ be the complementary analytic set.
 
We denote by
$X_y$ the scheme theoretic fiber of $y\in Y$, and we consider
$\displaystyle L_y:=L|_{X_y}$ and $h_{L,y}:=h_L|_{X_{y}}$ the corresponding 
restrictions; we remark that it can happen that we have $h_{L, y}\equiv +\infty $. 
Let 
\begin{equation}
\cI(h_L)\subset \cO_X
\end{equation} 
be the multiplier ideal associated to the metric $h_L$. 

\noindent We consider the following set
\begin{equation}
	Y_{h_L} =\{y\in Y_0; h_{y}\not\equiv +\infty \}. 
\end{equation}
We remark that $\displaystyle Y_{h_L}$ is Zariski dense in $Y$, but it 
may not be Zariski open.
The complement $Y \sm Y_{h_L}$ is a pluripolar set and hence can be 
quite different from algebraic/analytic objects.
For example, a pluripolar set may not be closed in Hausdorff topology, and can be Zariski dense. 
\end{notations}

\subsubsection{Local expression of the metric}\label{loc_ex}
We give here an explicit formulation of the {\it canonical $L^{2}$-metric} 
$h_{\cE}$ on $\cE:= p_{\star}(K_{X/Y}+L)|_{Y_{0}}$.
For this purpose, we may suppose that $Y$ itself is a coordinate neighborhood.
It is crucial 
to understand the local expression of this metric,
in order to derive later its extension properties.
\medskip

\noindent Let $\eta \in H^{0}(Y, K_{Y})$ be a nowhere vanishing section, trivializing the canonical bundle of $Y$; in particular we have $K_{Y}=\CO_{Y}\eta$.
We recall that we have 
$$H^{0}(Y,p_{\star}(K_{X/Y}+L))=H^{0}(Y, \mathcal Hom\, (K_{Y},p_{\star}(K_{X}+L))),$$ and therefore every section $u \in H^{0}(Y,p_{\star}(K_{X/Y}+L))$ corresponds 
to a map
\begin{equation}
u:K_{Y} \to p_{\star}(K_{X}+L)
\end{equation}
which is an $\CO_{Y}$-homomorphism.
We still use the same symbol $u$ for the induced homomorphism 
\begin{equation}
u : H^{0}(Y,K_{Y}) \to H^{0}(Y,p_{\star}(K_{X}+L))=H^{0}(X,K_{X}+L)
\end{equation}
and we write $u(\eta) \in H^{0}(X,K_{X}+L)$.

Let $\{U_{\lam} \}_{\lam}$ be a local coordinate system of $X$.
Regarding $u(\eta)$ as an $L$-valued top-degree holomorphic form on each $U_{\lam}$,
there exists $\sg_{u\lam} \in  H^{0}(U_{\lam}\sm p^{-1}(\Dl), K_X+ L)$
such that we have  
\begin{equation}
	u(\eta)=\sg_{u\lam}\wed p^{\star}\eta
\end{equation}
on $U_{\lam}\sm p^{-1}(\Dl)$ i.e.\ we can ``divide" $u(\eta)$ by 
$p^{\star}\eta$ where $p^{\star}\eta$ has no zeros.

It is important to remark that the 
choice of $\sg_{u\lam}$ is not unique (the ambiguity lies in the image of $\Omega_{X}^{n-1}\ot p^{\star}\Omega_{Y}^{1}$). However 
the restriction 
\begin{equation}
\sg_{u\lam}|_{X_{y}} \in H^{0}(U_{\lam}\cap X_{y}, K_{X_{y}}+ L_{y})
\end{equation} 
on each smooth fiber $X_{y}$ --recall that $(y\in Y_{0}$)-- 
is unique and independent of the local frame $\eta$.

In conclusion the collection $\{\sg_{u\lam} \}_{\lam}$, resp.\ $\{\sg_{u\lam}|_{X_{y}} \}_{\lam}$, glue together as a global section 
\begin{equation}\label{msri10}
	\sg_{u} \in H^{0}(X \sm f^{-1}(\Dl), K_{X/Y}+ L), 
	\quad \sg_{uy} \in H^{0}(X_{y}, K_{X_{y}}+ L_{y})
\end{equation}
respectively; moreover, the latter is the restriction of the former  
$\sg_{uy}= \sg_{u}|_{X_{y}}$ for $y \in Y_{0}$. As we have already mentioned, $\sigma_u$ is not
uniquely defined, but its restriction to fibers is; thus it 
can be thought as ``representative'' of the section $u$.  
\smallskip

\noindent Then the expression of the 
canonical $L^{2}$-metric $h_{\cE}$ is given as follows.
Let
$u, v \in H^{0}(Y,p_{\star}(K_{X/Y}+L))$ be two local section at 
$y \in Y_{0}=Y\sm \Dl$; then we define
\begin{equation}\label{msri11}
	\langle u,v\rangle_y: = 
\int_{X_{y}} c_{n} \sg_{u}|_{X_{y}} \wed \ol \sg_{v}|_{X_{y}} e^{-\varphi_{L, y}} .
\end{equation}
We remark that the coefficients of the metric $h_{\cE}$ are indeed 
measurable functions, and that the scalar product \eqref{msri11} above is definite
positive.

\begin{remark} The convergence of the quantity \eqref{msri11} depends of course on 
the singularities of $h_{L, y}$. 
The metric $\displaystyle h_{\cE}$ is automatically $+\infty$ 
on the set $\displaystyle Y_0\sm Y_{h_L}$. 
We remark that $\displaystyle h_{\cE}$ is only defined on $Y_0$, but even so, it may happen that $(\cE|_{Y_0}, h_{\cE})$ \emph{is not} a singular Hermitian sheaf 
according to the definition \ref{sHm}. Indeed, it may happen that we have 
\begin{equation}
\det (h_{\cE})\equiv \infty
\end{equation}
on $Y_0$, because of the singularities of the restriction $h_L|_{X_y}$. 
\end{remark}
\smallskip

\noindent We will nevertheless exhibit next a general 
setup in which $h_{\cE}$ 
is a singular Hermitian metric on 
on the 
direct image sheaf $p_{\star}(K_{X/Y}+ L)$ 
in the sense we have defined in \ref{sHm}.
We remark that in general, the singularities 
of the metric we construct are unavoidable.

\begin{notations}\label{set on Y}
The following subsets of $Y$ will be needed in order to clarify the 
expression of the metric $h_{\cE}$.
We recall that $Y_0\subset Y$ is the set of regular values of $p$, and that 
$Y_h=\{ y\in Y_0;\ h_{L, y}=h_L|_{X_{y}}\not\equiv +\infty\}$.
\begin{equation*} 
\begin{aligned}
Y_{h,\rm ext} & :=\{y\in Y_{h};\ 
		H^{0}(X_{y},(K_{X_{y}}+L_{y})\ot \CI(h_y)) = H^{0}(X_{y},K_{X_{y}}+L_{y}) \}, 
\\
Y_{\rm ext} & := \{y \in Y_{0};\ 
		h^0(X_y,K_{X_y}+L_y) \text{ equals to the rank of } p_{\star}(K_{X/Y}+L) \}.
\\
Y_{\cE} & := \text {the largest Zariski open subset such that } p_\star(K_{X/Y}+L)|_{Y_{\cE}} \text{ is locally free. }	\end{aligned}
\end{equation*} 
\end{notations}

\noindent 
We remark that the set $Y_{\rm ext}$ and $Y_{\cE}$ 
are independent of the metric $h_L$. We also have the inclusion 
$Y_{\rm ext}\subset Y_{\cE}$. 
The next statement motivates an additional hypothesis we will make in a moment.

\begin{lem}\label{Y1h}\cite{mpst} The following assertions hold true.
\begin{enumerate}

\item[{\rm (1)}] We have the inclusion $Y_{h,\rm ext} \subset Y_{\rm ext}\cap Y_h$.
\smallskip

\item[{\rm (2)}] Let $y \in Y_{h,\rm ext}$.
Then the equalities 
\begin{equation}
p_{\star}(K_{X/Y}+L)_{y}
=H^{0}(X_{y},K_{X_{y}}+L_{y})
=H^{0}(X_{y},(K_{X_{y}}+L_{y})\ot \CI(h_y))
\end{equation}
hold. 
As a consequence, the sheaf 
$p_\star(K_{X/Y}+L)$ is locally free at $y$, and the natural inclusion 
\begin{equation}
p_{\star}((K_{X/Y}+L)\ot \CI(h)) \subset p_{\star}(K_{X/Y}+L)
\end{equation} 
is isomorphic at $y$.

\smallskip

\item[{\rm (3)}] If the natural inclusion $p_{\star}((K_{X/Y}+L)\ot \CI(h)) \subset p_{\star}(K_{X/Y}+L)$ is generically isomorphic, then $Y_{h,\rm ext}$ is not empty and 
$Y\sm Y_{h,\rm ext}$ has measure zero. 
\smallskip

\end{enumerate}
In conclusion, the sheaf $\displaystyle p_{\star}(K_{X/Y}+L)|_{Y_{\rm ext}}$ 
can be endowed with the canonical metric 
$\displaystyle \{h_{\cE,y}\}_{y\in Y_{\rm ext}}$. We note that 
we have $\det h_{\cE, y}=+\infty$ if $y \in Y_{\rm ext}\sm Y_{h,\rm ext}$.
\end{lem}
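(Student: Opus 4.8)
The plan is to treat the three assertions in order, since (2) relies on (1) and the final ``conclusion'' is a direct consequence of (2) together with (3). For part (1), I would start from the definition of $Y_{h,\rm ext}$: if $y\in Y_{h,\rm ext}$ then $h_{L,y}\not\equiv+\infty$ (so $y\in Y_h$) and the inclusion $H^0(X_y,(K_{X_y}+L_y)\ot\CI(h_y))\subset H^0(X_y,K_{X_y}+L_y)$ is an equality. The point is that for a submersion the dimension $h^0(X_y,K_{X_y}+L_y)$ is semicontinuous in $y$, and by Grauert's theorem the rank of $p_\star(K_{X/Y}+L)$ equals the \emph{generic} value of this dimension; the triviality of the multiplier ideal forces the fiberwise cohomology to realize exactly the sections coming from the direct image, so that the dimension is the generic (minimal) one, i.e.\ $y\in Y_{\rm ext}$.

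For part (2), the heart is the base change / Ohsawa--Takegoshi mechanism. Given $y\in Y_{h,\rm ext}$, the equality $H^0(X_y,(K_{X_y}+L_y)\ot\CI(h_y))=H^0(X_y,K_{X_y}+L_y)$ says every holomorphic $L$-twisted canonical form on $X_y$ is $h_{L,y}$-square-integrable. Then the Ohsawa--Takegoshi extension theorem (exactly as invoked in the proof of Theorem \ref{Gp imply wp2}) shows that each such section extends to a neighborhood, so the natural map $p_\star((K_{X/Y}+L)\ot\CI(h))\to p_\star(K_{X/Y}+L)$ is surjective on stalks at $y$; since it is always injective, it is an isomorphism at $y$. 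Combined with $y\in Y_{\rm ext}\subset Y_{\cE}$ from (1), local freeness at $y$ follows, and the three displayed spaces coincide. I would verify that the integrability furnished by $\CI(h_y)=\CO_{X_y}$ is precisely what makes $h_{\cE,y}$ finite, which is why $\det h_{\cE,y}<\infty$ on $Y_{h,\rm ext}$ and $=+\infty$ on $Y_{\rm ext}\sm Y_{h,\rm ext}$.

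For part (3), the hypothesis that $p_\star((K_{X/Y}+L)\ot\CI(h))\subset p_\star(K_{X/Y}+L)$ is \emph{generically} isomorphic means the two sheaves agree on a Zariski-dense open set. I would argue that on this locus the generic fiber satisfies the defining condition of $Y_{h,\rm ext}$, so $Y_{h,\rm ext}\neq\emptyset$. For the measure-zero statement, the key observation is that $Y\sm Y_h$ is pluripolar (hence of measure zero) because the relevant fiber integrals are finite off a pluripolar set, while the failure of the extension equality on $Y_h\sm Y_{h,\rm ext}$ occurs only where the multiplier ideal $\CI(h_y)$ is nontrivial, which under the generic-isomorphism hypothesis happens on a set contained in a proper analytic subset union a pluripolar set, again of measure zero. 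The final ``conclusion'' then assembles: by (1) and (2) the canonical metric is well-defined and finite exactly on $Y_{h,\rm ext}$, and extends (with value $+\infty$ for the determinant) over the locally free locus $Y_{\rm ext}$.

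\emph{Main obstacle.} The delicate step is part (3), specifically controlling the nature of $Y\sm Y_{h,\rm ept}$ as a null set: one must carefully separate the pluripolar locus $Y\sm Y_h$ (coming from the singularities of $h_L$ restricted to fibers) from the locus where the multiplier ideal obstructs the extension equality, and show their union has Lebesgue measure zero. This requires a quantitative use of the semicontinuity of fiber integrals together with the fact that the generic-isomorphism hypothesis pins down $\CI(h_y)=\CO_{X_y}$ off a measure-zero set; the Ohsawa--Takegoshi estimate in part (2) is standard by comparison once the integrability is in hand.
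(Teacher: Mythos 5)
Your parts (1) and (2) run on the same engine as the paper's proof: for $y\in Y_{h,\rm ext}$ every section of $K_{X_y}+L_y$ is $h_y$-integrable, so Ohsawa--Takegoshi extends it to a section of $(K_{X/Y}+L)\ot\CI(h)$ on a neighborhood of $X_y$, and the cohomology-and-base-change theorem then gives local freeness of $p_\star(K_{X/Y}+L)$ at $y$ together with the isomorphism of the inclusion at $y$ (the last step from fiberwise surjectivity to surjectivity on stalks uses the base-change isomorphism plus Nakayama; you compress this, but so does the paper). One organizational caveat: your standalone argument for (1) does not close by itself. Semicontinuity plus Grauert only give $h^0(X_y,K_{X_y}+L_y)\geq \rank\, p_\star(K_{X/Y}+L)$; the reverse inequality is precisely the surjectivity of the base-change map, which is what Ohsawa--Takegoshi supplies. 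The phrase ``the triviality of the multiplier ideal forces the fiberwise cohomology to realize exactly the sections coming from the direct image'' asserts this rather than proves it. The paper avoids the issue by proving (2) first and recording (1) as a byproduct of the same argument, and your write-up should be reordered the same way.

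The genuine gap is in part (3). The hypothesis is a statement about sheaves on $Y$, whereas membership in $Y_{h,\rm ext}$ involves the \emph{fiberwise} multiplier ideal $\CI(h_y)$. To pass from one to the other you need, besides generic base change, the restriction statement $\CI(h)\cdot\CO_{X_y}=\CI(h_y)$ for almost every $y\in Y_0$. This is exactly the paper's Remark \ref{restriction}: the inclusion $\CI(h_y)\subset\CI(h)\cdot\CO_{X_y}$ holds for every $y$ by Ohsawa--Takegoshi, and the reverse inclusion holds almost everywhere by Fubini applied to local generators $s_i$ of $\CI(h)$ (from $|s_i|^2e^{-\vph_L}\in L^1_{\rm loc}(X)$ one gets $|s_i|_{X_y}|^2e^{-\vph_{L,y}}\in L^1_{\rm loc}(X_y)$ for a.e.\ $y$). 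Your substitute for this step is the claim that ``the generic-isomorphism hypothesis pins down $\CI(h_y)=\CO_{X_y}$ off a measure-zero set,'' and that claim is false. Take $X=Y\times F$ with $F$ a curve of genus $g\geq 2$, $q\in F$ a point, and $L$ the pullback of $\CO_F(q)$ with the singular metric whose curvature is the current of integration along $Y\times\{q\}$. Since $h^0(F,K_F+q)=g=h^0(F,K_F)$, every section of $K_F+q$ vanishes at $q$; hence the inclusion $p_\star((K_{X/Y}+L)\ot\CI(h))\subset p_\star(K_{X/Y}+L)$ is an isomorphism everywhere and $Y_{h,\rm ext}=Y$, yet $\CI(h_y)=\CO_{X_y}(-q)$ is nontrivial for \emph{every} $y$. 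So the defining condition of $Y_{h,\rm ext}$ is an equality of spaces of sections, not triviality of the ideal, and reducing (3) to controlling the locus where $\CI(h_y)$ is nontrivial cannot work. The correct route is: on the Zariski open set where the inclusion is an isomorphism and generic base change identifies both stalks with fiberwise cohomology, use $\CI(h)\cdot\CO_{X_y}=\CI(h_y)$ (valid a.e.) to convert the sheaf-level equality into the equality $H^0(X_y,(K_{X_y}+L_y)\ot\CI(h_y))=H^0(X_y,K_{X_y}+L_y)$, and combine with the fact that $Y\sm Y_h$ has measure zero.
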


\begin{proof} The first point (1) is a consequence of the arguments 
we provide for at for (2), as follows. 
\smallskip
 
\noindent (2)
By Ohsawa-Takegoshi, every section $u\in H^0(X_y,K_{X_y}+L_y)$ admits an extension 
$$\wtil u \in  H^{0}(X_W,(K_{X/Y}+L)\ot \CI(h))$$ 
defined on some neighborhood $X_W$ of $X_y$.
In particular the natural induced map 
$$p_{\star}\big((K_{X/Y}+L)\ot \CI(h)\big)_{y} \to H^0(X_y,K_{X_y}+L_y)$$ is surjective
(we note that the left-hand side direct image above is contained in 
$p_{\star}(K_{X/Y}+L)_{y}$).
The cohomology base change theorem implies that 
$p_\star(K_{X/Y}+L)$ is locally free at $y$, and 
as a consequence the natural inclusion 
$p_{\star}((K_{X/Y}+L)\ot \CI(h)) \subset p_{\star}(K_{X/Y}+L)$ is isomorphic at
$y$.
This argument explains also the point (1).
\smallskip
 
\noindent (3) By hypothesis, 
there exists a non-empty Zariski open subset $W\subset Y$ such that the inclusion map
$$p_{\star}\left((K_{X/Y}+L)\ot \CI(h)\right) \subset p_{\star}(K_{X/Y}+L)$$ 
is isomorphic on $W$, and such that 
\begin{equation}
p_{\star}\left((K_{X/Y}+L)\ot \CI(h)\right)_{y} = H^{0}(X_{y},(K_{X_{y}}+L_{y})\ot \CI(h)\cdot \CO_{X_{y}})
\end{equation}  
as well as
\begin{equation}
p_{\star}(K_{X/Y}+L)_{y}= H^{0}(X_{y},K_{X_{y}}+L_{y})
\end{equation}
for any $y\in W$.
Since $\CI(h)\cdot\CO_{X_{y}}=\CI(h_{y})$ for almost all\,$y\in Y_{0}$ in general by \ref{restriction} below, our assertion follows.
\smallskip
 
\noindent This last part of the lemma follows directly from (1)--(3).
\end{proof}
\medskip

\noindent The following fact was used in the proof of Lemma \ref{Y1h}.

\vskip2mm

\begin{remark}\label{restriction} 
As a consequence of Ohsawa-Takegoshi extension theorem, we have the inclusion $\CI(h_{y}) \subset \CI(h) \cdot \CO_{X_{y}}$ for any $y\in Y_{0}$. 
The next set we will be interested in would be $y\in Y_{0}$ such that 
$$
	\CI(h_{y}) = \CI(h) \cdot \CO_{X_{y}}.
$$
In the algebraic case, this holds for any $y$ in a Zariski open subset (\cite[9.5.35]{PAG}).
We show here that this equality holds on a set of full measure on $Y$ (which in general is not Zariski open).

To this end, it is enough to show that $\CI(h) \cdot \CO_{X_{y}} \subset \CI(h_{y})$ 
for almost all \,$y\in Y$.
We may suppose that $Y$ is a small coordinate neighborhood.
Let $U\subset X$ be a local coordinate set, which is isomorphic to a polydisk, such that $f|_{U}$ is 
(conjugate to) the projection to a sub-polydisk.
We may assume that $\CI(h)|_{U}$ is generated by a finite number of holomorphic functions $s_{1},\ldots, s_{k}\in H^{0}(U,\CO_{X})$, in particular $|s_{i}|^{2}h \in L^{1}_{loc}(U)$.
As these $s_{i}|_{X_{y}}$ generate $\CI(h) \cdot \CO_{X_{y}\cap U}$, it is enough to show that each $s_{i}|_{X_{y}} \in \CI(h_{y})|_{X_{y}\cap U}$ for almost all \,$y\in p(U)$.
By Fubini theorem, $|s_{i}|_{X_{y}}|^{2}h_{y} \in L^{1}_{loc}(X_{y}\cap U)$ for a dense 
set of\,$y\in f(U)$, i.e., $s_{i}|_{X_{y}} \in \CI(h_{y})|_{X_{y}\cap U}$ for almost all \,$y\in p(U)$.
\smallskip
\end{remark}
\medskip

\noindent In view of Lemma \ref{Y1h} we see that the following hypothesis 
is natural.

\begin{ass}\label{ass_gen}
We assume that the inclusion
\begin{equation}\label{msri12}
p_{\star}\big((K_{X/Y}+L)\ot \CI(h)\big) \subset p_{\star}(K_{X/Y}+L)
\end{equation}
is generically isomorphic. We remark that for this to hold it is not necessary that 
the multiplier ideal sheaf $\cI(h_L)$ equals $\cO_X$.
\end{ass}
\smallskip

\noindent Thus under the assumption \eqref{msri12} the restriction 
\begin{equation}\label{msri13}
\big(\cE|_{Y_{\rm ext}}, h_{\cE}\}\big) 
\end{equation}
is a singular Hermitian vector bundle in the sense of Definition \ref{sHm}. We note however
that the set $Y_{\cE}\setminus Y_{\rm ext}$ could be quite ``large'' in the sense that 
it may contain a codimension one algebraic set. Next, we show that 
$h_{\cE}$ is semi-positively curved and it admits an extension to $Y_{\cE}$.


\medskip


\subsection{Positivity and extension properties of the $L^2$ metric}

\noindent We recall next the following result.

\begin{thm}\cite[3.5]{BPDuke}, \cite{mpst}\label{bp35}
Let $p:X\to Y$ be a smooth algebraic fiber space and let $(L,h_L)$ be a positively curved line bundle, such that \eqref{msri12} holds.
Suppose moreover that $\cE:= p_{\star}(K_{X/Y}+L)$ is locally free. The following assertions hold true.
\begin{enumerate}

\item[{\rm (1)}]
The singular Hermitian vector bundle
$(\cE, h_{\cE})$ with positively curved. 
\smallskip

\item[{\rm (2)}] The following {\rm base change property} holds on $Y_{\rm ext}$: 
the restriction of the metric $\displaystyle h_{\cE}$ 
on the 
 fiber $p_{\star}(K_{X/Y}+L)_{y}$ at $y\in Y_{\rm ext}$, 
is given by the formula {\rm \eqref{msri11}}.
\end{enumerate}
\end{thm}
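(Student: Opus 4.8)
The plan is to reduce both assertions to Berndtsson's theorem \cite{B}, which treats the case of a \emph{smooth} positively curved twist, by approximating the singular weight of $h_L$ and combining this with the base-change analysis of Lemma \ref{Y1h}. Since semi-positivity of a singular Hermitian metric is local on the base (Definition \ref{curv}), I would first fix a coordinate ball $Y$ and work over $p^{-1}(Y)$, which is K\"ahler because $X$ is projective. Writing $\varphi_L$ for the psh weight of $h_L$, I would produce smooth weights $\varphi_{L,\nu}\downarrow\varphi_L$ with $\ddbar\varphi_{L,\nu}\ge -\ep_\nu\omega_0$ and $\ep_\nu\downarrow 0$ (by the convolution construction \eqref{eq15} where global charts permit, and by Demailly's regularization in general). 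For each fixed $\nu$ the pair $(L,h_{L,\nu})$ is smooth and essentially positively curved, so Berndtsson's theorem, in its quantitative form, shows that the $L^2$-metric $h_{\cE,\nu}$ defined by \eqref{msri11} with the weight $\varphi_{L,\nu}$ has Griffiths curvature bounded below by an error of order $\ep_\nu$.

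Then I would let $\nu\to\infty$. From $\varphi_{L,\nu}\downarrow\varphi_L$ we get $e^{-\varphi_{L,\nu}}\uparrow e^{-\varphi_L}$, so by monotone convergence the Hermitian forms $h_{\cE,\nu}$ increase pointwise to $h_{\cE}$ and the dual forms $h_{\cE,\nu}^\star$ decrease to $h_{\cE}^\star$. For a local holomorphic section $w$ of $\cE^\star$, the functions $g_\nu:=\log|w|^2_{h_{\cE,\nu}^\star}$ decrease and satisfy $\ddbar g_\nu\ge -C\ep_\nu\omega_0$ (Griffiths semi-negativity of the smooth dual metric, cf.\ Remark \ref{sing_1} and Definition \ref{curv}). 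Passing to the decreasing weak limit gives $\ddbar\log|w|^2_{h_{\cE}^\star}\ge 0$, i.e.\ $(\cE,h_\cE)$ is positively curved, \emph{provided} the limit is not $\equiv-\infty$. This nondegeneracy is exactly what Assumption \ref{ass_gen}/\eqref{msri12} buys: by Lemma \ref{Y1h}(3) the set $Y_{h,\rm ext}$ has full measure, so $\det h_\cE<\infty$ and $h_\cE^\star$ is a finite nondegenerate form on a dense set, forcing $\log|w|^2_{h_\cE^\star}\not\equiv-\infty$. Finally, since these weights are locally bounded above and $Y\sm Y_{\rm ext}$ has measure zero, the removable-singularity theorem for psh functions extends them across $Y\sm Y_{\rm ext}$, producing a positively curved singular Hermitian metric on the locally free sheaf $\cE$ over all of $Y$; this proves (1).

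For (2), on $Y_{\rm ext}$ the defining equality $h^0(X_y,K_{X_y}+L_y)=\rank\cE$ combined with cohomology and base change (Grauert) makes the restriction map $\cE_y\to H^0(X_y,K_{X_y}+L_y)$ an isomorphism. I would then use the Ohsawa--Takegoshi theorem \cite{OT}, exactly as in the proof of Lemma \ref{Y1h}(2), to realize every element of $H^0(X_y,K_{X_y}+L_y)$ as the restriction $\sg_u|_{X_y}$ of an honest local holomorphic section $u$ of $\cE$ near $y$; then the fibre metric that $h_\cE$ induces on $\cE_y$, evaluated on $u$, is computed by the integral \eqref{msri11} over $X_y$, with no discrepancy between the sheaf-theoretic fibre and the integrated form (including at points of $Y_{\rm ext}\sm Y_{h,\rm ext}$, where both sides are $+\infty$). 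I expect the main obstacle to lie not in (2) but in the nondegeneracy step of (1): guaranteeing that the monotone limit $h_\cE$ stays finite and nondegenerate on a large set, equivalently that $\log|w|^2_{h_\cE^\star}\not\equiv-\infty$. This is precisely why the multiplier-ideal hypothesis \eqref{msri12} and the measure-theoretic restriction statement of Remark \ref{restriction} are indispensable; by comparison, arranging a global regularization with $\ddbar\varphi_{L,\nu}\ge-\ep_\nu\omega_0$ is a secondary technical matter.
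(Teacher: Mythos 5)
You should know at the outset that the survey itself contains no proof of Theorem \ref{bp35}: it is recalled from \cite[3.5]{BPDuke} and \cite{mpst}, with only the remark that part (2) is implicit there. So your proposal must be measured against those proofs, and against correctness. Your overall architecture (approximate the weight, apply Berndtsson's theorem \cite{B} to each approximant, pass to a monotone limit, use \eqref{msri12} for nondegeneracy, and use Ohsawa--Takegoshi for the base-change statement (2)) has the right shape, but its first step contains a genuine gap, and it is exactly the step you dismiss as ``a secondary technical matter.'' You require smooth weights $\varphi_{L,\nu}\downarrow\varphi_L$ over $p^{-1}(Y)$ ($Y$ a small ball) with $\sqrt{-1}\ddbar\varphi_{L,\nu}\ge-\ep_\nu\omega_0$ and $\ep_\nu\downarrow 0$. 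Convolution as in \eqref{eq15} is unavailable: $p^{-1}(Y)$ contains the compact fibers $X_y$, so it sits in no single coordinate chart and is not Stein. Demailly's global regularization \cite{D92} does not produce this either: the loss of positivity it yields is bounded below by the Lelong numbers of $\Theta_{h_L}(L)$ and cannot be made to tend to zero unless the class is nef. The obstruction is cohomological, hence cannot be circumvented by any cleverer smoothing: if smooth metrics with curvature $\ge-\ep_\nu\omega_0$, $\ep_\nu\to0$, existed near a fiber containing a curve $C$ with $L\cdot C<0$, integrating over $C$ would give $L\cdot C\ge-\ep_\nu\int_C\omega_0\to 0$, a contradiction. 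Pseudo-effective but non-nef twists (say, $h_L$ the singular metric attached to a section of $L$ whose divisor meets the fibers in rigid curves) satisfy the hypotheses of the theorem, so your reduction collapses precisely in the cases that make the singular statement nontrivial.

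This is why the proofs in the literature are organized as they are: in \cite{BPDuke} the decreasing smooth approximation is performed only after deleting a suitable ample hypersurface from $X$, which leaves every $L^2$ integral \eqref{msri11} unchanged but makes the relevant open sets Stein, where psh weights \emph{are} decreasing limits of smooth psh weights with no loss of positivity and Berndtsson's theorem applies in its pseudoconvex form; the alternative route of \cite{mpst} (following Tsuji) avoids regularization altogether and derives the psh variation from the Ohsawa--Takegoshi theorem with optimal constant. Your remaining steps are essentially correct and coincide with the standard argument: monotone convergence gives $h_{\cE,\nu}\uparrow h_{\cE}$, decreasing limits of the dual weights are psh unless identically $-\infty$, and \eqref{msri12} via Lemma \ref{Y1h} excludes the degenerate case; likewise your treatment of (2) by Ohsawa--Takegoshi \cite{OT} is the intended one. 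Two further points need repair, though they are subordinate to the main gap. First, the ``quantitative form'' of Berndtsson's theorem you invoke (Griffiths curvature $\ge-C\ep_\nu$ when $\Theta(L)\ge-\ep_\nu\omega_0$) is not an off-the-shelf statement, and the constant $C$ depends on the fibration; it would itself require proof. Second, $Y\sm Y_{\rm ext}$ is an analytic set that may have codimension one, so extending the dual weights across it needs local boundedness from above near that set --- provable, e.g., by a Fatou-type lower bound for $\Vert u(y)\Vert_{h_{\cE}}$ as $y$ approaches the jumping locus, using that $e^{-\varphi_L}$ is lower semicontinuous --- and not merely the fact that the set has measure zero.
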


\noindent The part (2) of the result above 
is slightly more informative than the original one in \cite[3.5]{BPDuke}.
We stress on the fact that the statement above is implicit in \cite{BPDuke}, it is not mentioned explicitly.

\medskip

\noindent The main result in \cite{mpst} reads as follows.

\begin{thm}\label{mpst} 
Let $p:X\to Y$ be an algebraic fiber space and let $(L,h_L)$ be a positively curved line bundle, such that \eqref{msri12} holds.
Then the canonical $L^{2}$-metric $h_{\cE}$ on $\cE|_{Y_{\rm ext}}$ 
extends as a positively curved 
singular Hermitian metric $\wtil h_{\cE}$ on the torsion free sheaf $\cE$.
\end{thm}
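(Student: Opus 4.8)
The plan is to split the statement into two tasks: to show that $h_{\cE}$ is positively curved on the open set $Y_{\rm ext}$ where it is already a genuine singular Hermitian metric, and then to extend it across $Y_{\cE}\sm Y_{\rm ext}$ while preserving positivity. For the first task I would argue locally over $Y_{\rm ext}\subset Y_0$: there $p$ is a submersion, and by the very definition of $Y_{\rm ext}$ together with Lemma \ref{Y1h} the sheaf $\cE$ is locally free with fibres computed by base change, so a neighbourhood of each point of $Y_{\rm ext}$ falls exactly under the hypotheses of Theorem \ref{bp35}. Hence $(\cE|_{Y_{\rm ext}},h_{\cE})$ is positively curved. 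Assumption \eqref{msri12}, via Lemma \ref{Y1h}(3), moreover forces $Y\sm Y_{h,\rm ext}$ to have measure zero, so $0<\det h_{\cE}<\infty$ holds almost everywhere and $h_{\cE}$ is a bona fide singular Hermitian metric in the sense of Definition \ref{sHm}.

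For the extension I would reformulate everything through duality. By Definition \ref{curv}, positivity of $h_{\cE}$ is equivalent to semi-negativity of $h_{\cE}^{\star}$, i.e.\ to the plurisubharmonicity of $\log|v|^{2}_{h_{\cE}^{\star}}$ for every local holomorphic section $v$ of $\cE^{\star}$. Fix $y_{0}\in Y_{\cE}\sm Y_{\rm ext}$; since $\cE$, and hence $\cE^{\star}$, is locally free near $y_{0}$, it suffices to extend each such weight across the analytic set $Y_{\cE}\sm Y_{\rm ext}$, on whose complement it is psh. The essential difficulty is that this set may have codimension one, so the automatic extension across codimension-two loci used in Remark \ref{dual} is unavailable. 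Instead I would invoke the removable singularity theorem for plurisubharmonic functions: a psh function on the complement of a proper analytic subset which is locally bounded from above extends uniquely as a psh function across it. Collecting the extended weights reconstructs, through the relations illustrated in Remark \ref{bounded}, a semi-negatively curved metric $\wtil h_{\cE}^{\star}$ on $\cE^{\star}$ over $Y_{\cE}$; its dual $\wtil h_{\cE}$ is the sought positively curved extension, and since $\cE$ is torsion free this is, by Definition \ref{sHm3}, a metric on $\cE$ itself.

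The hard part will therefore be to prove that the weights $\log|v|^{2}_{h_{\cE}^{\star}}$ stay locally bounded from above as one approaches $Y_{\cE}\sm Y_{\rm ext}$. Dually this is a \emph{uniform lower bound} for the primal $L^{2}$ norms $|u|^{2}_{h_{\cE}}=\int_{X_{y}}c_{n}\,\sg_{u}|_{X_{y}}\wed\ol{\sg_{u}|_{X_{y}}}\,e^{-\vph_{L,y}}$ along a local frame $u$ of $\cE$. Here one input is free: since $(L,h_{L})$ is positively curved its local weight $\vph_{L}$ is psh, hence bounded above, so $e^{-\vph_{L}}\ge c_{0}>0$ locally and the problem reduces to bounding the unweighted fibre integrals from below. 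Over $Y_{0}\sm Y_{\rm ext}$, where the fibres are still smooth, this is elementary from the holomorphic dependence of the representatives $\sg_{u}$ in \eqref{msri11}; the genuine obstacle is concentrated along the discriminant $\Dl=Y\sm Y_{0}$, where the fibres degenerate and the forms $\sg_{u}$ need not remain controlled.

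To overcome this I would pass to a modification, choosing a semistable reduction together with a resolution of the pulled-back family so that over a smooth cover $\mu:Y'\to Y$ the degeneration locus becomes a normal crossing divisor along which the asymptotic behaviour of the relative forms, and hence of their $L^{2}$ norms, can be read off explicitly; the required lower bound, and thus the upper bound for the dual weights, becomes accessible there. Extending the pulled-back metric on $Y'$ and then descending by Lemma \ref{pullback}(2) produces the extension on $Y$. Once the uniform upper bound is secured, the psh extension of the second paragraph applies, semi-negativity of $\wtil h_{\cE}^{\star}$ passes to the limit, and $0<\det\wtil h_{\cE}<\infty$ almost everywhere is automatic because the construction alters $h_{\cE}$ only on the measure-zero set $Y\sm Y_{\rm ext}$. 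This yields the positively curved extension $\wtil h_{\cE}$ and completes the proof.
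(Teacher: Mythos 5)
Your skeleton does coincide with the paper's: positivity on $Y_{\rm ext}$ from Theorem \ref{bp35} together with Lemma \ref{Y1h}, reduction of the extension problem to the codimension-one components of $Y_{\cE}\sm Y_{\rm ext}$, conversion by duality into a uniform upper bound for the psh weights of a dual frame followed by the removable-singularity theorem for psh functions, and semistable reduction to handle degenerate fibres. But the analytic heart of the proof is asserted rather than proved. In the normal crossing/semistable situation you claim the lower bound for the fibre integrals ``can be read off explicitly'' from asymptotics of the relative forms; that is exactly the statement to be established, and it is not a computation. A priori nothing prevents a sequence $v_k=\sum_p\lambda^p_k u_p$ with $\Vert\lambda_k\Vert=1$ from satisfying $\Vert v_k(y_k)\Vert_{g}\to 0$, because the limiting top form $\rho_\infty=v_\infty(dt)$ could vanish identically on the central fibre. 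The paper excludes this by the divisibility argument of Lemma \ref{restrict}: if $\rho_\infty$ vanished along every component of the snc fibre, then $v_\infty/p^\star(t)$ would again be a holomorphic section of $\cE$, i.e.\ $v_\infty\in \fm_{y_\infty}\cE$, contradicting that $(u_p)$ is a frame and $\Vert\lambda_\infty\Vert=1$; only after this does a continuity argument give the uniform bound \eqref{mess1}. This idea, or some substitute for it, is absent from your plan, so the crucial step is a genuine gap.

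Your descent step also fails as stated. What the semistable case produces upstairs is a positively curved extension of the canonical $L^2$ metric on $\wt\cE=\wt p_\star\big(K_{\wt X/\wt Y}+\wt L\big)$, the direct image of the \emph{modified} family; this is in general a different sheaf from $\tau^\star\cE$, because the fibred product $\wt Y\times_Y X$ is singular and passing to its desingularization changes the direct image. Lemma \ref{pullback}(2), however, requires a positively curved extension of the pullback metric $\tau^\star h_{\cE}$ on the pullback sheaf $\tau^\star\cE$ itself. The bridge between the two is Viehweg's comparison morphism $\wt\cE\to\tau^\star\big(p_\star(K_{X/Y}+L)\big)$, which is generically isomorphic and an isometry over the good locus, combined with Lemma \ref{subquot2}(3) (a generically surjective morphism from a positively curved sheaf induces a positively curved metric on the target) to transport the extended metric to $\tau^\star\cE$; only after that can one descend to $Y$. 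Without this comparison, an extension of the metric on $\wt\cE$ says nothing about $\tau^\star h_{\cE}$, so the chain semistable case $\to$ Viehweg's morphism $\to$ Lemma \ref{subquot2}(3) $\to$ descent cannot be collapsed to Lemma \ref{pullback}(2) alone.
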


\noindent In the rest of this section we will highlight  
the key steps in the proof of \ref{mpst} as in \cite{mpst}. 
One can see that to some extent, our arguments represent a 
generalization of 
the work of T.\ Fujita in \cite{Ft}.
 
We start by making some standard reductions and by fixing some notations. 
In order to lighten the writing, let
\begin{equation}
g:= h_{\cE}
\end{equation}
be the $L^2$ metric defined on $Y_{\rm ext}$ as explained above.

We have $\codim (Y\sm Y_{\cE})\ge 2$ so it 
is enough to show that $g$ extends to a singular Hermitian metric on 
$\displaystyle \cE|_{Y_{\cE}}$ with
positive curvature, given the definition 
of singular Hermitian metric for torsion-free coherent sheaves (see \ref{dual}).
Thus we may assume from the start that $\cE$ 
is locally free, in other words we have $Y_{\cE}= Y$.
We also note that the extension of $g$ is a local matter on $Y$.
Moreover, we can freely restrict ourselves on a Zariski 
open subset $Y'' \subset Y$ with $\codim (Y\sm Y'')\ge 2$, since the extended metric is unique.
\smallskip

\noindent In conclusion, it would be enough to show that $g$ extends across the
codimension one components of the set $Y_{\cE}\sm Y_{\rm ext}$. Let $\Sigma$ be such a component. The singular points of $\Sigma$ have codimension at least two in $Y$, so we will simply ignore them. 
We will assume that the base $Y$ is a unit polydisk in $\bC^m$ 
with coordinates $t = (t_1, \ldots, t_m)$, such that 
\begin{equation}
\Sigma= (t_m= 0).
\end{equation}
Let $dt = dt_1 \wed \ldots \wed dt_m \in H^0(Y, K_Y)$ be the frame of $K_{Y}$
corresponding to the $t$--coordinates. We write next the $p$-inverse image of 
$\Sigma$ as follows
\begin{equation}
p^\star(\Sigma)= \sum_{j\in J_v}b^jZ_j+ \sum_{j\in J_h}b^jZ_j 
\end{equation}
where the hypersurfaces $(Z_j)\subset X$ corresponding to indexes $J_v$ project into a proper analytic subset of $\Sigma$, and $p(Z_j)= \Sigma$ for all $j\in J_{h}$. Moreover, we can assume that $(Z_j)$ have simple normal crossings, and that the induced map
$\Supp p^*\Sigma \to \Sigma$ is relative normal crossing 
: this can be achieved by a
birational transform of $X$ (cf.\ \cite{mpst}, Remark 3.2.4).
\smallskip

\noindent All in all, it is enough to work in the following setup.   

\smallskip
\noindent
(1) We consider a general point $\displaystyle y_0\in \Sigma\sm \cup_{j\in J_v}p(Z_j)$,
and a open set $y_0\in \Omega$ such that $\cE|_{\Omega}$ is trivialized by the sections
$u_1,\dots, u_r$; with respect to the $t$ coordinates above, we have $y_0= 0$.
\smallskip

\noindent
(2) Locally near every point $x \in X$, there exists a local coordinate 
\begin{equation}
(U; z = (z_1, \ldots, z_{n+m}))
\end{equation} 
such that $p|_U$ is given by $t_1 = z_{n+1}, \ldots, t_{m-1} = z_{n+m-1}$, $t_m = z_{n+m}^{b_{n+m}} \prod_{j=1}^{n} z_j^{b_j}$ with non-negative integers $b_j$ and $b_{n+m}$.
\smallskip

\noindent We will show next that the  positively curved (cf.\ \ref{bp35}) 
canonical $L^{2}$-metric $g$ on $\cE$ 
defined over $Y_{0}$ extends locally near $y_0$; according to the discussion above, this would be enough to conclude.

Let $(\xi_j)_{j=1,\dots, r}$ be the base of $\cE^\star$ induced by 
$(u_j)$; by definition, this means that $\displaystyle \xi_j(u_k)= \delta_{jk}$. 
We already know that the function
\begin{equation}\label{msri15}
t\to \log \Vert \xi_j(t)\Vert_{g^\star}
\end{equation}
defined on $\Omega\setminus (t_m= 0)$ is psh for any $j=1,\dots, r$. 
If we are able to show that 
\begin{equation}\label{msri16}
\sup_{t\in \Omega\sm \Sigma}\Vert \xi_j(t)\Vert_{g^\star}< \infty
\end{equation}
then by Hartogs theorem the function \eqref{msri15} admits a unique psh extension 
to $\Omega$, which is exactly what we have to prove.
\smallskip

\noindent We will argue by contradiction: assume that \eqref{msri16} does not holds
say for $j=1$.
Then we obtain a sequence of points 
$\displaystyle y_k\in \Omega\setminus (t_m= 0)$ such that 
\begin{equation}
y_k\to y_\infty\in (t_m= 0)
\end{equation}
together with a sequence of sections $\wt v_k$ of $\cE$ written 
as
\begin{equation}
\wt v_k= \sum_{p=1}^r \mu^p_k u_p
\end{equation}
such that 
\begin{equation}\label{msri 17}
\Vert \wt v_k(y_k)\Vert_{g}= 1,\quad |\mu^1_k|\to \infty.
\end{equation}
Let $v_k:= \frac{1}{\Vert \mu_k\Vert}\wt v_k$ be the rescaling of $\wt v_k$ by the 
norm of its coefficients $\mu_k\in \bC^r$. This new sequence of sections 
\begin{equation}
v_k= \sum_{p=1}^r \lambda^p_k u_p
\end{equation}
has the following properties. 
\begin{enumerate}
\smallskip

\item[(i)] The sequence of coefficients
$\lambda_k$ converges to $\lambda_\infty$ as $k\to \infty$ so we obtain a limit
\begin{equation}\label{msri18}
v_k\to v_\infty: = \sum_{p=1}^r \lambda^p_\infty u_p
\end{equation}
and $\Vert \lambda_\infty\Vert= 1$, so that $\lambda_\infty$ belongs to the 
unit sphere of the Euclidean space $\bC^r$.
\smallskip

\item[(ii)] We have $\displaystyle 
\Vert v_k(y_k)\Vert_{g}\to 0$ as $k\to \infty$.
\end{enumerate}

\noindent Intuitively at least ``we are done'': the section $v_{\infty}$ is non-zero as 
element of the stalk $\displaystyle \cE_{y_\infty}$ by the point (i), whereas 
(ii) seems to say that the norm of $v_\infty$ at $y_\infty$ is zero. Except that there are basically two obstacles to overcome. The first one is that the fiber $p^{-1}(y_\infty)$
is singular (reducible, non-reduced...) as we see from the point (2) above, so it is not so clear what it means that \emph{$v_\infty$ is non zero as element of the stalk $\displaystyle \cE_{y_\infty}$}. The second one is that the norm of $v_\infty$ at $y_\infty$ 
is not defined yet --in point of fact, this is what we are after...
\smallskip

\noindent In the paper \cite{mpst}, the difficulties mentioned above are overcome in
two steps, as follows.
\smallskip

\noindent $\bullet$ If $p$ is \emph{semi-stable} in codimension one, meaning that the coefficients $\displaystyle (b_j)_{j=1,\dots, n, n+m }$ are either zero or one, then one first shows that there exists a component of $p^{-1}(y_\infty)$ such that the top degree form
$u_\infty\wedge p^\star(dt)$ is not identically zero when restricted 
to this component. Then a continuity argument, combined with the fact that on $\Omega\setminus (t_m= 0)$ the expression of the metric $g$ is explicit, one is able to show that 
(i) and (ii) cannot hold simultaneously.
\smallskip

\noindent $\bullet$ The general case (i.e. arbitrary coefficients $b_j$) is addressed by using the previous bullet (the semi-stable case), together with the weak 
semi-stable reduction theorem, cf.\ \cite{KKMS}, \cite{Nbook} combined with 
a fundamental result of Viehweg. Note that the properties of positively curved 
singular Hermitian metric are playing a role in this part of the proof as well.
\medskip

\noindent We will not reproduce here completely the arguments in \cite{mpst}. 
Instead, we will sketch the proof of a particular case of the first bullet above and explain the outline of the proof of the second one, as follows.
We assume that the base $Y$ is one-dimensional, and that the local expression of $p$ is
\begin{equation}
(z_1,\dots, z_n, z_{n+1})\to z_1z_{n+1}
\end{equation}
so basically we assume that we only have 
a simple normal crossing fiber as singularity at the origin. 
\smallskip

\noindent We have the following statement, cf.\ \cite{mpst}, Lemma 3.3.8 and beginning of proof of Lemma 3.3.12.

\begin{lemma}\cite{mpst}\label{restrict}
Let $\rho_\infty:= v_\infty(dt)$ be the holomorphic 
section of $\displaystyle K_X+ L|_{p^{-1}(\Omega)}$ 
corresponding to $v_\infty$. Then the vanishing order of $\rho_\infty$ along one of the components of the snc 
divisor
$B$ given locally by $(z_1z_{n+1}=0)\subset X$ is equal to zero..
\end{lemma}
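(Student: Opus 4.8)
The plan is to argue by contradiction, exploiting the fact that $v_\infty$ is nonzero \emph{in the fibre} of $\cE$ over the origin, whereas vanishing of $\rho_\infty$ along both branches of $B$ would force $v_\infty$ to lie in the maximal ideal.

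First I would record the nondegeneracy of $v_\infty$. By \eqref{msri18} we have $v_\infty = \sum_p \lambda^p_\infty u_p$ with $\Vert\lambda_\infty\Vert = 1$, and since $u_1,\dots,u_r$ trivialise $\cE$ near $y_\infty = 0$, the germ of $v_\infty$ lies in the free module $\cE_0$ and its image in the fibre $\cE_0/\fm_0\cE_0$ is nonzero; equivalently, since $\fm_0=(t)$, we have $v_\infty \notin t\cdot\cE_0$. Tensoring by the frame $dt$ identifies $\cE = p_\star(K_{X/Y}+L)$ with $p_\star(K_X+L)$, so the associated section $\rho_\infty = v_\infty(dt)$ is a nonzero holomorphic section of $K_X+L$ over $p^{-1}(\Omega)$.

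Next I would assume, for contradiction, that $\rho_\infty$ vanishes to positive order along \emph{both} components $Z_1=(z_1=0)$ and $Z_2=(z_{n+1}=0)$ of $B$. In the present semi-stable local model $p^\star t = z_1 z_{n+1}$, whose divisor is the reduced sum $Z_1+Z_2$. As $z_1$ and $z_{n+1}$ are coprime in the local ring, divisibility of $\rho_\infty$ by each implies divisibility by their product, so
\begin{equation}
\rho' := \frac{\rho_\infty}{p^\star t}
\end{equation}
extends to a holomorphic section of $K_X+L$ over all of $p^{-1}(\Omega)$. Applying $p_\star$ to the relation $\rho_\infty = p^\star(t)\,\rho'$ and using the projection formula, according to which multiplication by the pulled-back function $p^\star t$ becomes multiplication by $t$ on $p_\star(K_X+L)$, I would conclude $v_\infty = t\cdot v'$ in $\cE_0$, where $v'$ is the section corresponding to $\rho'$. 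This places $v_\infty$ in $t\cdot\cE_0 = \fm_0\cE_0$, contradicting the first step; hence the order of $\rho_\infty$ must be zero along at least one of $Z_1$, $Z_2$.

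The hard part will be the passage from positive order along both branches to honest global divisibility by $p^\star t$. This is where the choice of a semi-stable local model is essential: reducedness of $p^{-1}(0) = Z_1+Z_2$ guarantees that no multiplicity obstructs the division, and the identification $p_\star(K_X+L)\cong p_\star(K_{X/Y}+L)\otimes K_Y$ combined with the projection formula is precisely what transports the divisibility back from $\rho_\infty$ to $v_\infty$. Once these are in place, the nondegeneracy $\Vert\lambda_\infty\Vert = 1$ supplies the contradiction.
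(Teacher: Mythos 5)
Your proposal is correct and follows essentially the same route as the paper: both argue by contradiction, divide $\rho_\infty$ by $p^\star t$ using the reducedness of the snc fiber to get a holomorphic section $\eta = v_\infty/p^\star(t)$ of $\cE|_\Omega$, and conclude $v_\infty \in \fm_0\cE_0$, contradicting $\Vert\lambda_\infty\Vert = 1$ and the fact that the $u_p$ trivialize $\cE$. Your explicit justification of the division step via coprimality of $z_1$ and $z_{n+1}$ makes precise what the paper only asserts (``the vanishing order of $\rho_\infty$ is large enough''), but the underlying argument is identical.
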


\begin{proof}
We assume the contrary, i.e. the local holomorphic form $\rho_\infty$ 
vanishes along $B$. Then we claim that the quotient
\begin{equation}\label{msri20}
\eta:= \frac{v_{\infty}}{p^\star(t)}
\end{equation}
is a \emph{holomorphic} section of $\cE|_{\Omega}$. Indeed, if we are able to do so,
then we infer that $v_\infty\in m_0\cE$, in other words, it corresponds to zero when restricted to the stalk $\cE_{0}$ (here we denote by $m_0$ the maximal ideal of $0\in Y$). 
On the other hand, we know a-priori that this 
is not the case.

In order to justify the fact that $\eta$ is holomorphic, we have to show that it 
induces a $\cO_Y$-morphism
\begin{equation}
K_Y\to p_\star(K_X+ L). 
\end{equation}
This is however clear: $\displaystyle \eta\wedge p^\star(dt)= \frac{\rho_\infty}
{p^\star(t)}$ which is holomorphic thanks to the fact that the vanishing order of
$\rho_\infty$ is large enough.     
\end{proof}
\medskip

\noindent Let $\sigma_j$ be the $n$-form defined on the coordinate open set $V$ corresponding to $u_j$, cf.\ \ref{loc_ex}. Then we have 
\begin{equation}
\sum \lambda^j_\infty \sigma_j\wedge p^\star(dt)= \rho_\infty 
\end{equation}
by the definition of $v_\infty$. We assume that $\rho_\infty$ is not identically zero 
when restricted to $z_{n+1}= 0$, cf.\ Lemma \ref{restrict} above.
For each $j=1,\dots, r$ we write next
\begin{equation}
\sigma_j\wedge p^\star(dt)= \sum_{p\geq 0}a_{jp}(z^\prime)z_{n+1}^p
\end{equation}
where $z^\prime:= (z_1,\dots, z_n)$ and thus the holomorphic function
\begin{equation}\label{msri21}
\sum_j\lambda^j_\infty a_{j0}(z^\prime)
\end{equation}
is not identically zero.  
By eventually shrinking the set $V$, it follows that we have
\begin{equation}\label{msri22}
\inf_V\left|{\sum_{p\geq 0}\sum_j\lambda^j_\infty a_{jp}(z^\prime)z_{n+1}^p}\right|> 0.
\end{equation}
If $k\gg 0$, then we equally have
\begin{equation}\label{msri23}
\inf_V\left|{\sum_{p\geq 0}\sum_j\lambda^j_k a_{jp}(z^\prime)z_{n+1}^p}\right|\geq \ep_0> 0
\end{equation}
and then we easily derive a \emph{uniform} lower bound for the norm of $v_k$ at $y_k$ 
as follows.

In terms of local forms $(\sigma_j)$ the inequality \eqref{msri23} can be rewritten as
\begin{equation}\label{msri24} 
\inf_V\left|\frac{\sum_j\lambda^j_k\sigma_j\wedge \ol{\sum_j\lambda^j_k\sigma_j}}
{dz^\prime\wedge d\ol z^\prime}\right|\geq \ep_0> 0
\end{equation}
where $dz^\prime:= dz_1\wedge\dots\wedge dz_n$. We remark that \eqref{msri24}
holds true as soon as $k\gg 0$ is large enough. But then we have
\begin{equation}\label{mess1}
\Vert v_k(t)\Vert^2\geq \int_{X_t\cap V}c_n
\sum_j\lambda^j_k\sigma_j\wedge \ol{\sum_j\lambda^j_k\sigma_j}e^{-\varphi_L}\geq 
\ep_0\Vol(V\cap X_t)
\end{equation} 
for any $t\neq 0$,
and the sketch of the proof of the first bullet is finished, since \eqref{mess1}
clearly contradicts the point (ii) above. 

\smallskip

\noindent The main steps of the remaining part of the 
argument are as follows. By the semi-stable reduction theorem, there exists a finite map
\begin{equation}
\tau: \wt Y\to Y
\end{equation}
and an induced map $\wt p: \wt X\to \wt Y$, where $\wt X$ is the desingularization of the 
(main component of the) fibered product $\wt Y\times_Y\!X$ such that the 
coefficients $b_j$ above corresponding to $\wt p$ are equal to 0 or 1. By the results of 
E. Viehweg \cite{Vi1}, there exists a morphism of sheaves
\begin{equation}
\wt \cE:= p_{\star}(K_{\wt X/\wt Y}+ \wt L)\to \tau^\star\left(p_{\star}(K_{X/Y}+ L)\right)
\end{equation} 
which is generically \emph{isomorphic}; actually, it is an isometry on $Y\sm 0$, as we 
see directly from the definition of the $L^2$ metric. Now, the metric 
$\displaystyle h_{\wh \cE}$ extends thanks to the fact that $\wt p$ is semi-stable in codimension one;
the proof is completed thanks to Lemma \ref{subquot2}, point (3).  \qed
\smallskip

\begin{rem}
It may be possible to show that the sheaf
\begin{equation}
p_{\star}\left((K_{X/Y}+L)\ot \CI(h)\right)
\end{equation} 
is positively curved without assuming that the inclusion
$$p_{\star}\left((K_{X/Y}+L)\ot \CI(h)\right) \subset p_{\star}(K_{X/Y}+L)$$ 
is generically isomorphic.
However there are some technical difficulties to overcome.
To explain this, suppose $Y$ is a disk, and $X_{0}=\sum m_{i}X_{i}$ is the singular fiber over $0\in Y$. 
There are difficulties to compare the ideal $\CI_{X_{0}}, \CI(h)$, and $\text{div} (u)$ for $u \in H^{0}(X,(K_{X/Y}+L)\ot \CI(h))$.
Embedded components of $\CO_{X}/\CI(h)$ are difficult to handle in general.
A more serious trouble may arise because of the fact that the metric 
$h_L$ may have non-algebraic singularities, so we cannot reduce to the normal crossing situation even after a modification of the manifold.
\end{rem}
\medskip


\subsection{Direct images of pluricanonical bundles}

\noindent In the article \cite{mpst} it is established that 
under some reasonable assumptions which will be made precise 
in a moment, the sheaf  
\begin{equation}\label{msri24}
\cE_m:= p_\star\left(m(K_{X/Y}+ L)\right),
\end{equation}
is positively curved when endowed with a natural metric, 
for any $m\geq 1$. As usual, 
here $p:X\to Y$ is an algebraic fiber space and $(L, h_L)$ is a positively curved 
singular Hermitian $\bQ$-line bundle, such that $mL$ is Cartier. The idea is to reduce ourselves to the 
case $m= 1$, as follows. We write
\begin{equation}
m(K_{X/Y}+ L)= K_{X/Y}+ L_m
\end{equation}
where $L_m= L+ (m-1)(K_{X/Y}+ L)$, and then we clearly have
\begin{equation} 
\cE_m= p_\star(K_{X/Y}+ L_m).
\end{equation}
In the next subsections, we will see that in some cases one can construct a 
metric  $h_m$ on $L_m$ such that the assumption \ref{ass_gen} is satisfied. 

\smallskip  
 \subsubsection{The relative Bergman metric}
 
 \noindent The general set-up for the current section is as follows. 
Let $X$ and $Y$ be two projective manifolds, which are assumed to be non-singular.
 Let $p: X\to Y$ be a surjective map, and let $(L, h_L)\to X$ be a 
line bundle endowed with a Hermitian  
 metric $h_L$. We assume that we have
 \begin{equation}\label{equa10}
 \Theta_{h_L}(L)\geq 0
 \end{equation}
 in the sense of currents on $X$.
 
 \noindent 
 In this context we recall  the construction of the Bergman 
metric $\displaystyle e^{-\varphi_{X/Y}}$ on the bundle $K_{X/Y}+ L$; we refer to \cite{BPDuke} for further details. 

\noindent Let $Y_0$ be a Zariski open subset of $Y$ such that $p$ is smooth over $Y_0$, and for every $y\in Y_0$, the fiber $X_y$ satisfies 
$h^0 (X_y, K_{X/Y}\otimes L\otimes \mathcal{I} (h_L |_{X_y})) = \rank p_* (K_{X/Y}\otimes L\otimes \mathcal{I} (h_L))$.
Let $X^0$ be the $p$-inverse image of $Y_0$ and let $x_0\in X^0$ be an arbitrary point; let $z^1,\dots, z^{n+m}$ be local coordinates centered at $x_0$, 
and let $t^1,\dots , t^m$ be a coordinate
centered at $y_0:= p(x_0)$. We consider as well a trivialization of $L$ near $x_0$. 
With this choice of local coordinates, we have a local trivialization of the tangent bundles of $X$ and $Y$ respectively, and hence of the (twisted) relative canonical bundle. 

\noindent The local weight of the metric $\displaystyle e^{-\varphi_{X/Y}}$ with respect to this is given by the equality

\begin{equation}\label{relative}
e^{\varphi_{X/Y}(x_0)}= \sup_{\Vert u\Vert_{y_0}\leq 1} |F_u (x_0)|^2
\end{equation}
where the notations are as follows: $u$ is a section of $\displaystyle K_{X_{y_0}}+ L|_{X_{y_0}}$, 
and $F_u$ corresponds to the local expression of $u\wedge p^\star dt$, i.e. the coefficient of $dz^1\wedge \dots \wedge dz^{n+m}$. 
The norm which appears in the definition \eqref{relative} is obtained by the fiber integral
\begin{equation}\label{equa121}
\Vert u\Vert_{y_0} ^2:= \int_{X_{y_0}} |u|^2e^{-\varphi_L}.
\end{equation}
\medskip

\noindent An equivalent way of defining \eqref{relative} is via an orthonormal basis, say $u_1,
\dots , u_k$ of sections of $\displaystyle K_{X_{y_0}}+ L|_{X_{y_0}}$. Then we see that
\begin{equation}\label{on}
e^{\varphi_{X/Y}(x_0)}= \sum_{j=1}^N |F_j(x_0)|^2
\end{equation}
where $F_j$ are the functions corresponding to $u_j$. 

\noindent The Bergman metric $\displaystyle h_{X/Y}= e^{-\varphi_{X/Y}}$ can also be introduced in an
 intrinsic manner as follows. Let $\xi$ be a vector in the fiber over $x_0$ of the dual bundle 
 $\displaystyle -(K_{X/Y}+ L)_{x_0}$.
 The expression
 \begin{equation}\label{equa11}
 \vert \xi\vert^2= \sup_{\Vert u\Vert_{y_0}\leq 1} |\langle \xi, u\rangle |^2
 \end{equation}
defines a metric on the dual bundle, whose local weight is precisely $\varphi_{X/Y}$. 
\medskip

\noindent As we see from \eqref{on}, the restriction of the metric 
$e^{\varphi_{X/Y}}$ to the fiber $X_{y_0}$ coincides with the metric induced by any \emph{orthonormal basis}
of the space of holomorphic sections of $\displaystyle K_{X_{y_0}}+ L|_{X_{y_0}}$. Hence the variation from one fiber to another is in general a $\mathcal C^\infty$ operation, since the said orthonormalization process is involved. Thus it is a remarkable fact that this metric has positive curvature in the sense of currents on $X$.

\begin{theorem} {\rm (\cite[Thm 0.1]{BPDuke})}\label{rel1} The curvature of the metric $h_{X/Y}$ on the twisted relative canonical bundle $\displaystyle K_{X/Y}+ L|_{X^0}$ is positive in the sense of currents. Moreover, the local weights $\varphi_{X/Y}$ are uniformly bounded from above on $X^0$, so they admit a unique extension as psh functions.
\end{theorem}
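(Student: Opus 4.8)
The statement has two parts: the positivity of the curvature current of $h_{X/Y}=e^{-\varphi_{X/Y}}$ on $X^0$, and the uniform upper bound for $\varphi_{X/Y}$ together with the resulting extension. In a local trivialization the curvature of $h_{X/Y}$ equals $\sqrt{-1}\ddbar\varphi_{X/Y}$, so the first part is equivalent to showing that $\varphi_{X/Y}$ is psh on $X^0$. I find it most convenient to argue on the dual line bundle: by the intrinsic description \ref{equa11}, $\varphi_{X/Y}$ is precisely the local weight of the induced metric on $-(K_{X/Y}+L)$, so it suffices to prove that $\log\|\xi\|^2$ is psh for every local holomorphic section $\xi$ of $-(K_{X/Y}+L)|_{X^0}$. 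This is exactly the assertion that this dual metric is semi-negatively curved in the sense of \ref{curv}, which by line bundle duality is equivalent to the semi-positivity we want.

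The conceptual shortcut I would take identifies the dual metric with a pull-back of the $L^2$ metric on the direct image. Over the locus $Y_0$ (where $p$ is smooth and the rank condition of \ref{loc_ex} holds) the sheaf $\cE:=p_\star(K_{X/Y}+L)$ is a vector bundle carrying the canonical $L^2$ metric $h_{\cE}$. For $x\in X^0$ the evaluation $\mathrm{ev}_x\colon \cE_{p(x)}\to (K_{X/Y}+L)_x$, $u\mapsto F_u(x)$, is holomorphic in $x$, and its adjoint $\Phi(x):=\mathrm{ev}_x^{\star}\colon -(K_{X/Y}+L)_x\to \cE^{\star}_{p(x)}$ is a holomorphic bundle map into $p^{\star}\cE^{\star}$. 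Since \ref{equa11} computes the operator norm of a functional on the Hilbert space $(\cE_{p(x)},h_{\cE})$, one has the pointwise identity $\|\xi\|^2=\|\Phi(\xi)\|^2_{p^{\star}h_{\cE}^{\star}}$. When $h_L$ is smooth, Berndtsson's theorem \cite{B} asserts that $(\cE,h_{\cE})$ is Griffiths semi-positive; by the duality \ref{sing_1} the pair $(\cE^{\star},h_{\cE}^{\star})$ is Griffiths semi-negative, hence so is its pull-back $p^{\star}(\cE^{\star},h_{\cE}^{\star})$ by \ref{pullback}. As $\Phi(\xi)$ is a local holomorphic section of $p^{\star}\cE^{\star}$, the function $\log\|\Phi(\xi)\|^2_{p^{\star}h_{\cE}^{\star}}$ is psh, and therefore so is $\log\|\xi\|^2$, giving the positivity when $h_L$ is smooth.

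To treat a general semi-positively curved — possibly singular — $h_L$, for which a global smooth approximation with preserved positivity need not exist, one proves the subharmonicity of $\varphi_{X/Y}$ directly, and this is really the analytic heart of the matter. Fixing $x_0\in X^0$, I would choose the extremal section $u_0$ realizing the supremum \ref{equa11} on the fiber $X_{p(x_0)}$, extend it holomorphically to the neighbouring fibers by the Ohsawa–Takegoshi theorem \cite{OT} with uniform $L^2$ control, and use these extensions as competitors to establish the sub-mean-value inequality for $\varphi_{X/Y}$. I expect this step — equivalently, the positivity of the curvature of the $L^2$ metric, which rests on a Bochner–Kodaira–Hörmander computation combined with the semi-positivity of $\Theta_{h_L}(L)$ — to be the genuine difficulty; the duality argument above is attractive precisely because it packages this difficulty into the cited result \cite{B}.

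For the second part the uniform upper bound is elementary. Working in a coordinate ball $U\subset X^0$ adapted to the submersion $p$, the weight $\varphi_L$, being psh, is bounded above, so $e^{-\varphi_L}\ge c>0$ on $U$. For a section $u$ with $\|u\|_{p(x)}\le 1$, writing $F_u$ for the holomorphic coefficient of $u\wed p^{\star}dt$, the sub-mean-value inequality for the holomorphic function $F_u$, combined with $e^{-\varphi_L}\ge c$ and the boundedness of the relative Jacobian of $p$ on $U$, yields $|F_u(x)|^2\le C\int_{X_{p(x)}}|u|^2e^{-\varphi_L}\le C$ with $C$ independent of $u$ and of $x\in U$; taking the supremum and using \ref{on} gives $\varphi_{X/Y}\le \log C$ on $U$. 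Finally, $\varphi_{X/Y}$ is psh and locally bounded above on $X^0=X\setminus A$, where $A=p^{-1}(Y\setminus Y_0)$ is analytic; by the removable-singularity theorem for psh functions across analytic (indeed pluripolar) sets, $\varphi_{X/Y}$ extends uniquely to a psh function, which is the asserted extension.
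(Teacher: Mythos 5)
Your smooth-case argument is correct, and it is precisely the route the paper attributes to \cite{BPDuke}: one deduces the positivity of the Bergman-type metric from Berndtsson's theorem for $(\cE,h_{\cE})$, using the holomorphic transpose of the evaluation map $p^{\star}\cE\to K_{X/Y}+L$, line-bundle duality, and the pull-back Lemma \ref{pullback}. The first place where your proposal falls short is the singular case, which you yourself flag as the heart of the matter: the competitor scheme you sketch does not close with the tool you cite. The classical Ohsawa--Takegoshi theorem \cite{OT} produces an extension $U$ of the extremal section $u_0$ with a universal constant $C_0$ strictly larger than the measure of the base disc, so the chain ``$\varphi_{X/Y}(x_0)=\log|F_U(x_0)|^2\le$ mean of $\log|F_U|^2\le$ mean of $\varphi_{X/Y}$ plus mean of $\log\Vert U|_{X_t}\Vert^2$'' ends, after Fubini and Jensen, with a positive additive error $\log C_0$ that does not go away. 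This is exactly why the paper states that the Tsuji-style alternative argument rests on the \emph{optimal constant} extension theorem of B\l ocki and Guan--Zhou (\cite{blocki}, \cite{GZ}), for which the error is zero; the other standard fix, the one actually used in \cite{BPDuke}, is Demailly regularization of $h_L$ with curvature $\ge -\ep\omega$ followed by the smooth statement and a limit $\ep\to 0$. One of these ingredients must be named; as written, the singular case is not proved.

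The second gap is the genuine one, in the boundedness statement. The content of ``uniformly bounded from above on $X^0$'' is a bound near the degeneration locus $A:=X\setminus X^0$, because that is what the psh extension requires; your argument never produces such a bound. You work in charts $U\subset X^0$ adapted to the submersion, and your constant $C$ depends on $U$ (through $\sup_U\varphi_L$, the inradius of the fibre slices, and the Jacobian of the adapted coordinates); such charts do not exist at points of $A$, and the bound deteriorates as $U$ approaches $A$. What you actually prove is that $\varphi_{X/Y}$ is locally bounded above \emph{on the open set} $X^0$, which is automatic for any psh function. Since $A$ is in general of codimension one (it is the preimage of the discriminant), the removable-singularity theorem you invoke does not apply without a bound near $A$: a psh function on the complement of a hypersurface need not extend (for instance $1/|z_1|$ is psh on $(z_1\neq 0)$ and unbounded near it). The argument of \cite{BPDuke} obtains the uniform bound by fixing a coordinate ball $B$ centered at a point of $A$ and applying the mean-value inequality to $|F_u|^2$ on the fibre $X_{p(x)}$ regarded as an $n$-dimensional \emph{analytic subset} of $B$: Lelong's monotonicity formula gives $|F_u(x)|^2\le C_n\delta^{-2n}\int_{X_{p(x)}\cap B(x,\delta)}|F_u|^2\,dV$, and the comparison of $|F_u|^2\,dV$ with $c_n\,u\wedge\overline{u}\,e^{-\varphi_L}$ involves only $\sup_B\varphi_L$ and the sup on $B$ of the differential of $p$ --- both finite, and working in the favorable direction as $dp$ degenerates --- so the resulting bound is uniform up to $A$, and the psh extension then follows.
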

\medskip

\noindent We will not reproduce here the proof of the preceding result. The first complete argument was given in \cite{BPDuke}, in which Theorem \ref{rel1} is obtained as consequence of the Griffiths positivity of $p_\star(K_{X/Y}+ L)$, combined with 
a regularization procedure. In \cite{mpst}, section 4 
(and references therein), an alternative argument (originally due to H.~Tsuji) is presented, based on 
the version of OT theorem with optimal constant obtained by Blocki and Guan-Zhou, cf.\
\cite{blocki}, \cite{GZ}; see also \cite{JCao}.

\smallskip

\noindent The definition \ref{relative}, although not intrinsically formulated, is explicit 
enough so as to imply the following statement. Let $p:X\to Y$ be a dominant map, 
such that $X$ is K\"ahler; we denote by 
$\Delta$ the analytic set corresponding to the critical values of $p$, and we assume that the $p$-inverse image of $\Delta$ equals
\begin{equation}\label{equa221}
\sum_{i\in I} e_iW_i
\end{equation}
where $e^i$ are positive integers, and $W_i$ are reduced hypersurfaces of $X$. 
\medskip

\noindent The next statement can be seen as a metric version of the corresponding results due to 
Y. Kawamata in \cite{Ka98} and of F. Campana in \cite{Cam04}, respectively. We will not use it in what follows, but it is interesting to see that the singularities of the 
map $p$ are taken into account by the singularities of the Bergman 
metric $e^{-\varphi_{X/Y}}$.  
\begin{theorem}\label{nonreduce}
Let $\Theta_{X/Y}$ be the curvature current corresponding to the Bergman metric {\rm \ref{relative}}. Then we have 
\begin{equation}\label{equa222}
\Theta_{X/Y}\geq [\Sigma_p]:= \sum_{i\in I_h}(e_i-1)[W_i]
\end{equation}
in the sense of currents on $X$ where $I_h$ is the set of indexes $i\in I$ such that $p(W_i)$ is a divisor of $Y$. In other words,   
the current $\Theta_{X/Y}$ is singular along the multiple fibers of the map $p$.
\end{theorem}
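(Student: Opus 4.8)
The plan is to reduce everything to a local Lelong-number estimate along each divisorial component. By Theorem \ref{rel1} the weight $\varphi_{X/Y}$ is already globally defined, psh, and bounded above on $X$, so $\Theta_{X/Y}$ is a genuine closed positive $(1,1)$-current. Since the right-hand side of \eqref{equa222} is supported on $\bigcup_{i\in I_h}W_i$ and $\Theta_{X/Y}\ge 0$ everywhere, Siu's decomposition of the closed positive current $\Theta_{X/Y}$ reduces the whole assertion to showing that the generic Lelong number of $\varphi_{X/Y}$ along $W_i$ is at least $e_i-1$ for every $i\in I_h$. The components $W_j$ with $j\notin I_h$, whose image $p(W_j)$ is not a divisor, then contribute nothing, in accordance with the sum being taken over $I_h$ only.

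First I would work at a general smooth point of a fixed $W_i$, $i\in I_h$, discarding the loci $W_i\cap W_j$ and the images of the non-divisorial components, all of which have codimension $\ge 2$ and hence do not affect a current inequality. There I choose coordinates $(z_1,\dots,z_{n+m})$ on $X$ with $W_i=(w=0)$, $w:=z_{n+m}$, and coordinates $(t_1,\dots,t_m)$ on $Y$ with $p(W_i)=(t_m=0)$, in the normal form $t_j=z_{n+j}$ for $j<m$ and $t_m=w^{e_i}v$ with $v$ a local unit, exactly as in the local description of $p$ recalled earlier. Then $p^{\star}dt$ carries the factor $e_i\,v\,w^{e_i-1}$ and so vanishes to order exactly $e_i-1$ along $W_i$. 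Writing a fibre section as $u=g\,dz_1\wedge\cdots\wedge dz_n$ in the chosen $L$-frame, the function $F_u$ of \eqref{relative} becomes $F_u=\pm\, e_i\,v\,w^{e_i-1}g$, whence the explicit formula gives $e^{\varphi_{X/Y}}=|w|^{2(e_i-1)}\,|v|^2 e_i^2\,B_y$, where $B_y:=\sup_g |g|^2\big/\!\int_{X_y}|g|^2 e^{-\varphi_L}$ is the fibrewise Bergman kernel at the point, formed with the norm \eqref{equa121}.

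The heart of the matter — and the step I expect to be the main obstacle — is a uniform upper bound $B_y\le C$ as the point approaches $W_i$, i.e.\ as the smooth fibre $X_y$ degenerates onto the multiple fibre. A naive sub-mean-value inequality over a ball of radius comparable to $|w|$ is useless, since that radius collapses as $w\to 0$. The key geometric observation is that, in the normal form above, each nearby smooth fibre $X_y\cap U$ is a graph $w=w(z_1,\dots,z_n)$ over the \emph{full} fibre-polydisc, on which $|w|$ stays comparable to $|t_m|^{1/e_i}$ and in particular bounded away from $0$; hence the sub-mean-value inequality can be applied over a polydisc of fixed radius, giving $|g|^2\le C\int_{X_y\cap U}|g|^2\,dV$ with $C$ independent of $y$. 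Combined with the local upper bound on the psh weight $\varphi_L$ (so that $e^{-\varphi_L}$ is bounded below on $U$), this yields $B_y\le C'$ uniformly.

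Feeding this back produces $\varphi_{X/Y}\le (e_i-1)\log|w|^2+C$ near the generic point of $W_i$, so that the generic Lelong number of $\varphi_{X/Y}$ along $W_i$ is at least $e_i-1$, and by Poincaré--Lelong the singular factor $|w|^{2(e_i-1)}$ contributes precisely $(e_i-1)[W_i]$. Siu's decomposition of $\Theta_{X/Y}$ then gives $\Theta_{X/Y}\ge\sum_{i\in I_h}(e_i-1)[W_i]$, which is \eqref{equa222}. The residual work is bookkeeping: verifying that the excluded loci are of codimension $\ge 2$, that the normal form and the unit $v$ are available at a general point of each $W_i$, and that the non-divisorial components genuinely drop out of the estimate.
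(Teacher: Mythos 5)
Your proposal is correct and takes essentially the same route as the paper's own proof: the local normal form $t_m = w^{e_i}v$ at a general smooth point of $W_i$, the factor $w^{e_i-1}$ extracted from $p^\star dt$ in the formula for $F_u$, and the uniform bound on the fibrewise Bergman kernel obtained by applying the sub-mean-value inequality along nearby fibres, which remain full-size polydisc graphs, yielding $\varphi_{X/Y}\le (e_i-1)\log|w|^2+O(1)$ and hence the current inequality via Lelong numbers. The differences are only presentational: you make explicit the Siu-decomposition reduction and the fixed-radius mean-value step, which the paper compresses into the identification of nearby fibres with the unit polydisc and the remark that the normalization \eqref{equa121} bounds the sections computing the metric.
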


\begin{proof}
Let $x_0\in W_1$ be a non-singular point of one of the sets appearing in \eqref{equa221}. 

We consider a coordinate set $\Omega$ containing the point $x_0$, and we fix the coordinates 
$\displaystyle (z_1,\dots , z_{n+m})$ on $\Omega$, such that 
$W_1\cap \Omega= (z_{n+ 1}= 0)$. The local structure of the map $p$ is as follows
\begin{equation}\label{eq1}
\big(z_1,\dots, z_{n+m})\to (z_{n+1},\dots, z_{n+ m-1}, z_{n+m}^{e_{n+m}}\big).
\end{equation}

Then we see that the intersection of the fibers of $p$ near $p(x_0)$ 
with $\Omega$ can be identified with the unit disk in $\bC^n$.  The upshot is that 
the normalization  \eqref{equa121} allows us to bound the absolute value of the restriction of the 
section $u$ which computes the Bergman metric near $x_0$. 

More precisely, let $u$ is a section of the $\displaystyle K_{X_{y_0}}+ L|_{X_{y_0}}$ as in \eqref{equa121}. We assume that we have $\|u\|_{X_{y_0}} ^2=1$, and 
by the construction of $F_u$, we have
$$\int_{X_{y_0} \cap \Omega} \frac{|F_u|^2}{|z_{n+m}|^{2(e_{n+ m}- 1)}} 
d\lambda(z^\prime) \leq \|u\|_{X_{y_0}} ^2 =1 $$
where $d\lambda(z^\prime)$ is the Lebesgue measure corresponding to the first $n$ variables
$z_1,\dots, z_n$.
Combining this with \eqref{relative}, we have thus
\begin{equation}\label{eq2002}
\varphi_{X/Y}(z)\leq (e_{n+ 1}-1)\log\vert z_{n+ 1}\vert^2+ \cO(1) ,
\end{equation}
and the proof is finished.
\end{proof}
\medskip


\bigskip

\noindent The construction of the metric $h_{X/Y}$ has a perfect pluricanonical analogue, as we recall next.
Let $u$ be a section of the bundle $\displaystyle m(K_{X_y}+ L)$, where $m\geq 1$ is a positive integer; here $L$ can even be a $\bQ$-line bundle, but in that case $m$ 
has to be divisible enough so that $mL$ is a genuine (Cartier) line bundle.
 
\noindent Then we define 
\begin{equation}\label{eq41}
\Vert u\Vert^{\frac{2}{m}}_y:= \int_{X_y}\vert u\vert^{\frac{2}{m}}e^{-\varphi_L},
\end{equation}
and the definition \eqref{equa11} generalizes immediately, as follows. Let $\xi$ be a vector in the fiber 
over $x$ of the dual bundle 
 $\displaystyle -m(K_{X/Y}+ L)_{x}$
 The we have
 \begin{equation}\label{eq44}
 \vert \xi\vert^2= \sup_{\Vert u\Vert_y\leq 1} |\langle \xi, u\rangle |^2.
 \end{equation}
We denote the 
resulting metric by $h^{(m)}_{X/Y}$.
\medskip

\noindent We recall next the analogue of Theorem \ref{rel1}, as follows.

\begin{theorem} {\rm (\cite[Thm 0.1]{BPDuke})}\label{rel2} The curvature of the metric $\displaystyle h_{X/Y}^{(m)}$ 
on the twisted relative pluricanonical bundle 
$\displaystyle m\left(K_{X/Y}+ L\right)|_{X^0}$ is positive in the sense of currents. Moreover, the local weights $\varphi_{X/Y}^{(m)}$ are uniformly bounded from 
above on $X^0$, so they admit a unique extension as psh functions.
\end{theorem}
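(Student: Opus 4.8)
The plan is to follow the route indicated just after Theorem \ref{rel1}: instead of passing through the Griffiths positivity of a direct image, I would argue directly that the local weight $\varphi_{X/Y}^{(m)}$ is plurisubharmonic on $X^0$, using the Ohsawa--Takegoshi extension theorem with \emph{optimal} constant (\cite{blocki}, \cite{GZ}) in its $L^{2/m}$ form. Since plurisubharmonicity is tested on holomorphic discs, I would first reduce to a one-dimensional base: restricting the family over a disc $\bD \hookrightarrow Y_0$ and pulling everything back leaves the fibrewise definition of $\varphi_{X/Y}^{(m)}$ unchanged, and the purely fibre directions are covered by the classical plurisubharmonicity of the $m$-Bergman kernel on a fixed fibre. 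So it is enough to prove that $\varphi_{X/Y}^{(m)}$ is subharmonic along discs transverse to the fibres, which is recorded by the one-dimensional base.

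The heart of the argument is a sub-mean-value inequality at a fixed $x_0 \in X^0$ over $y_0 = p(x_0)$. I would pick an extremal section $u_0 \in H^0(X_{y_0}, m(K_{X_{y_0}}+L_{y_0}))$ realising $e^{\varphi_{X/Y}^{(m)}(x_0)} = \sup_{\Vert u\Vert_{y_0}\le 1}|F_u(x_0)|^2$, normalised so that $\Vert u_0\Vert_{y_0} = 1$ and $|F_{u_0}(x_0)|^2 = e^{\varphi_{X/Y}^{(m)}(x_0)}$. The optimal-constant extension theorem produces $U \in H^0(p^{-1}(\bD_r), m(K_{X/Y}+L))$ with $U|_{X_{y_0}} = u_0$ and an $L^{2/m}$ bound whose constant is asymptotically proportional to the area of $\bD_r$; the sharpness of this constant is exactly what makes the estimate close. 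Since $F_U$ is holomorphic, $\log|F_U|^2$ is psh, and from the definition of $\varphi_{X/Y}^{(m)}$ as a supremum over normalised sections one gets, on nearby fibres,
\[
\varphi_{X/Y}^{(m)}(x) \ge \log|F_U(x)|^2 - \log \Vert U|_{X_{p(x)}}\Vert_{p(x)}^2 .
\]
Averaging over a small circle about $x_0$, the $\log|F_U|^2$ term contributes at least $\varphi_{X/Y}^{(m)}(x_0)$ by the sub-mean-value property of the holomorphic $F_U$, while the optimal constant forces the averaged norm term to be non-positive as $r\to 0$; this yields $\varphi_{X/Y}^{(m)}(x_0) \le \frac{1}{2\pi}\int_0^{2\pi}\varphi_{X/Y}^{(m)}\, d\theta$ on that circle, i.e.\ plurisubharmonicity.

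For the second assertion I would prove the uniform upper bound directly. Choosing a coordinate polydisc $U$ adapted to $p$ so that the fibre through $x_0$ is a coordinate sub-polydisc, the function $|F_u|^{2/m}$ is subharmonic, being a positive power of a holomorphic function, so its value at $x_0$ is dominated by its mean over a fixed ball; comparing with $e^{-\varphi_L}$, which is locally bounded, and applying Fubini, this mean is controlled by $\Vert u\Vert_{y_0}^{2/m} \le 1$ via the norm \eqref{eq41}. Hence $\varphi_{X/Y}^{(m)} \le C$ with $C$ independent of $x_0 \in X^0$. Since $X \sm X^0 = p^{-1}(\Delta)$ is a proper analytic subset and $\varphi_{X/Y}^{(m)}$ is psh and locally bounded above on $X^0$, the standard removable-singularity theorem for plurisubharmonic functions provides the unique psh extension to $X$.

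The main obstacle is the exponent $2/m$: for $m>1$ the norm \eqref{eq41} is not Hilbertian, so Ohsawa--Takegoshi does not apply verbatim, and it is precisely the optimal \emph{value} of the constant — not merely its existence — that the averaging step consumes. I would handle this by the device recalled at the start of this subsection, writing $m(K_{X/Y}+L) = K_{X/Y}+L_m$ with $L_m = L + (m-1)(K_{X/Y}+L)$ and twisting by the metric with weight $(m-1)\varphi_{X/Y}^{(m)} + \varphi_L$ on $L_m$, so that the $L^{2/m}$ extension becomes an honest $L^2$ extension for $K_{X/Y}+L_m$. The cost is the self-referential appearance of $\varphi_{X/Y}^{(m)}$ in the twist; breaking this circularity — either through the sharpness of the optimal constant, or through a monotone iteration that preserves psh at each step by Theorem \ref{rel1} and converges to $\varphi_{X/Y}^{(m)}$ — is the delicate point of the whole argument.
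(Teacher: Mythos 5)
The paper itself offers no proof of Theorem \ref{rel2}: it is quoted from \cite{BPDuke}, and the survey only indicates (for Theorem \ref{rel1}) the two known routes, namely Griffiths positivity of direct images plus regularization, or the optimal-constant Ohsawa--Takegoshi argument of Tsuji as written up in \cite{mpst}, \cite{blocki}, \cite{GZ}, \cite{JCao}. Your proposal follows this second route, and its architecture (extremal section, optimal extension, averaging) is correct for $m=1$; but it has a genuine gap exactly where Theorem \ref{rel2} differs from Theorem \ref{rel1}, namely the exponent $2/m$. You correctly observe that the norm \eqref{eq41} is not Hilbertian and that the twist weight $\frac{m-1}{m}\varphi^{(m)}_{X/Y}+\varphi_L$ on $L_m$ is self-referential, but you then leave ``breaking this circularity'' as an acknowledged open point --- and that point \emph{is} the theorem: everything preceding it reproves only the case $m=1$. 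Moreover, the resolution you gesture at (a monotone iteration on metrics invoking Theorem \ref{rel1} at each step) would still require identifying the limit with $\varphi^{(m)}_{X/Y}$, a nontrivial claim you do not address. The known way out is an iteration on \emph{extensions}, cf.\ \cite{BPDuke}, \cite{BP12}: given an extension $U_k$ of the extremal section $u_0$ (to start, any holomorphic extension over a shrunken disc), endow $(m-1)(K_{X/Y}+L)$ with the weight $\frac{m-1}{m}\log|U_k|^2$ --- psh because $U_k$ is holomorphic, so there is no circularity --- and extend $u_0$ again in $L^2$ with respect to it; H\"older's inequality gives $a_{k+1}\le C^{1/m}a_k^{(m-1)/m}$ for $a_k=\int|U_k|^{2/m}e^{-\varphi_L}$, whence a uniform bound and a weak limit realizing the $L^{2/m}$-extension with the optimal constant (alternatively, one quotes the pluricanonical case of \cite{GZ} directly). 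A second, smaller flaw is in your averaging step: the optimal extension theorem controls the \emph{area} integral $\int_{\bD_r}\Vert U_t\Vert^{2/m}_t\,d\lambda(t)$, not circle averages, so the claim that ``the optimal constant forces the averaged norm term to be non-positive as $r\to 0$'' does not follow as written. The correct closing step is Jensen's inequality for the area mean, $\frac{1}{\pi r^2}\int_{\bD_r}\log\Vert U_t\Vert^{2/m}_t\,d\lambda \le \log\Big(\frac{1}{\pi r^2}\int_{\bD_r}\Vert U_t\Vert^{2/m}_t\,d\lambda\Big)\le 0$, combined with the area sub-mean-value inequality for the psh function $\log|F_U|^2$; likewise, your reduction to a one-dimensional base should be phrased by pulling back the family along $p\circ\gamma$ for an \emph{arbitrary} disc $\gamma$ in $X^0$, since subharmonicity along fibre discs and transverse discs alone does not imply plurisubharmonicity.

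The second assertion of the theorem is also not established by your argument. Coordinate polydiscs ``adapted to $p$'', in which the fibre through $x_0$ is a coordinate sub-polydisc, necessarily shrink as $x_0$ approaches $p^{-1}(\Delta)$, so the constant in your mean-value estimate blows up precisely near the singular fibres; but uniformity of the upper bound \emph{up to} $p^{-1}(\Delta)$ is exactly what the ``Moreover'' clause asserts, and it is what the psh extension across $p^{-1}(\Delta)$ requires (away from $\Delta$, local boundedness from above is automatic for a psh weight). The standard repair, which is the one used in \cite{BPDuke}: cover the compact $X$ by finitely many \emph{fixed} ambient coordinate balls; for $x_0\in X^0$ lying in a half-sized ball $B$, regard $X_{p(x_0)}\cap B$ as an $n$-dimensional analytic subset through $x_0$, and apply the Lelong--Jensen sub-mean-value inequality on analytic sets together with the volume lower bound $\Vol\big(X_{p(x_0)}\cap B(x_0,r)\big)\ge c_n r^{2n}$ furnished by Lelong's theorem; since $\varphi_L$ is locally bounded above, this yields $|F_u(x_0)|^{2/m}\le C\,\Vert u\Vert^{2/m}_{p(x_0)}$ with $C$ independent of $x_0\in X^0$, which is the required uniform bound.
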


\begin{remark}\label{msing} {\rm
If the map $p$ verifies the hypothesis of Theorem \ref{nonreduce}, then we infer that 
\begin{equation}
\Theta_{h_{X/Y}^{(m)}}\big(mK_{X/Y}+ mL\big)\geq m[\Sigma_p].
\end{equation}
The proof is absolutely the same as in Theorem \ref{nonreduce}: if the local structure of the map $p$ is as in \eqref{eq1}, then the 
$L^{2/m}$ normalization bound for the sections involved in the computation of the metric $h_{X/Y}^{(m)}$
imply that the local pointwise norm of these sections is bounded. The weights of the metric 
 $\displaystyle h_{X/Y}^{(m)}$ are given by the wedge product with $dt^{\otimes m}$, so the conclusion follows.
}
\end{remark}

\bigskip

\noindent As a consequence of Theorem \ref{rel2} and Theorem \ref{mpst} we obtain in 
\cite{mpst} the following result.

\begin{theorem}\label{plurican}
Let $p:X\to Y$ be an algebraic fiber space, and let $(L, h_L)$ be a positively curved 
Hermitian $\bQ$-line bundle. We assume that the multiplier ideal sheaf $\cI(h_L)$
associated to $h_L$ (which is well-defined, despite of the fact that $L$ is only a $\bQ$-bundle) is equal to $\cO_X$, and we endow the bundle
\begin{equation}
L_m:= L+ (m-1)(K_{X/Y}+ L)
\end{equation}
with the metric $\displaystyle h_m:= e^{-\vph_L-\frac{m-1}{m}\vph^{(m)}_{X/Y}}$. Then the 
resulting metric $h^{(m)}_{\cE}$ on the direct image sheaf $\cE_m$ is 
positively curved.
\end{theorem}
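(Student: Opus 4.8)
The plan is to reduce everything to the already-established case $m=1$, namely Theorem~\ref{mpst}, applied to the auxiliary $\bQ$-line bundle $L_m = L + (m-1)(K_{X/Y}+L)$ equipped with the metric $h_m = e^{-\vph_L - \frac{m-1}{m}\vph^{(m)}_{X/Y}}$. Since $K_{X/Y}+L_m = m(K_{X/Y}+L)$, we have $p_\star(K_{X/Y}+L_m) = \cE_m$, and the canonical $L^2$ metric attached to $(L_m,h_m)$ via the construction of \ref{loc_ex} is exactly the metric $h^{(m)}_\cE$ in the statement. Thus it suffices to verify the two hypotheses of Theorem~\ref{mpst}: that $(L_m,h_m)$ is positively curved, and that the inclusion \eqref{msri12} is generically isomorphic for $(L_m,h_m)$.

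First I would check the curvature positivity, which is immediate. The local weight of $h_m$ is $\vph_L + \frac{m-1}{m}\vph^{(m)}_{X/Y}$, so
\begin{equation*}
\Theta_{h_m}(L_m) = \Theta_{h_L}(L) + \frac{m-1}{m}\,\Theta_{h^{(m)}_{X/Y}}\big(m(K_{X/Y}+L)\big).
\end{equation*}
The first term is $\ge 0$ by hypothesis, and the second is $\ge 0$ by Theorem~\ref{rel2}; hence $\Theta_{h_m}(L_m)\ge 0$ as a current.

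The main step is the verification of Assumption~\ref{ass_gen} for $(L_m,h_m)$, which I expect to be the crux of the argument. By Lemma~\ref{Y1h} it is enough to show that a generic smooth fiber $X_y$ lies in $Y_{h,\rm ext}$, i.e.\ that every holomorphic section $u$ of $K_{X_y}+L_m|_{X_y} = m(K_{X_y}+L_y)$ is square-integrable for $h_m$ on the fiber. Here the extremal characterization of the relative Bergman metric is decisive: by \eqref{relative}--\eqref{eq44} one has the pointwise bound $|u(x)|^2 e^{-\vph^{(m)}_{X/Y}(x)} \le \|u\|_y^2$ on $X_y$, where $\|u\|_y^2 = \big(\int_{X_y}|u|^{2/m}e^{-\vph_L}\big)^m$. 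Splitting the exponent as $\tfrac{m-1}{m}+\tfrac1m$ and inserting this bound gives
\begin{align*}
\int_{X_y}|u|^2 e^{-\vph_L}e^{-\frac{m-1}{m}\vph^{(m)}_{X/Y}}
&= \int_{X_y}\big(|u|^2 e^{-\vph^{(m)}_{X/Y}}\big)^{\frac{m-1}{m}}\,|u|^{2/m}e^{-\vph_L} \\
&\le \|u\|_y^{\frac{2(m-1)}{m}}\int_{X_y}|u|^{2/m}e^{-\vph_L} = \|u\|_y^2 < \infty,
\end{align*}
the finiteness coming from the hypothesis $\cI(h_L)=\cO_X$, which forces $\cI(h_L|_{X_y})=\cO_{X_y}$ on a generic fiber (Remark~\ref{restriction}) so that the $L^{2/m}$-norm of the holomorphic section $u$ converges. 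Consequently every global section of $m(K_{X_y}+L_y)$ lies in $H^0(X_y,(K_{X_y}+L_{m,y})\ot\cI(h_{m,y}))$, i.e.\ $y\in Y_{h,\rm ext}$ for $y$ generic. By Lemma~\ref{Y1h}(2) the inclusion \eqref{msri12} is then an isomorphism at each such $y$, and since its cokernel is a coherent sheaf supported on a proper analytic subset of $Y$, the inclusion \eqref{msri12} is generically isomorphic.

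With both hypotheses verified, Theorem~\ref{mpst} applies to $(L_m,h_m)$ and yields a positively curved singular Hermitian metric on $\cE_m = p_\star(K_{X/Y}+L_m)$ extending the canonical $L^2$ metric $h^{(m)}_\cE$, which is exactly the desired conclusion. The only genuinely delicate point is the fiberwise integrability estimate above; everything else amounts to matching the pluricanonical data to the framework of Theorem~\ref{mpst}.
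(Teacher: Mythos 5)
Your proof is correct and follows essentially the same route as the paper: check that $(L_m, h_m)$ is positively curved using Theorem \ref{rel2}, then verify Assumption \ref{ass_gen} by combining the extremal characterization of $\vph^{(m)}_{X/Y}$ with the hypothesis $\cI(h_L)=\cO_X$ to get $L^2$-integrability against $h_m$, and conclude via Theorem \ref{mpst}. The only (minor) difference is that you run the estimate fiberwise and pass through Lemma \ref{Y1h}, whereas the paper integrates directly over $p^{-1}(\Omega)$; your version even records the factor $\Vert u\Vert_y^{2(m-1)/m}$ that the paper's inequality \eqref{msri30} tacitly absorbs by normalization.
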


\begin{proof}
By hypothesis and Theorem \ref{rel2}, the metric $h_m$ defined as indicated above 
is positively curved. Hence we only have to check that the inclusion 
\begin{equation}
p_\star\left((K_{X/Y}+ L_m)\otimes \cI(h_m)\right)\subset \cE_m= p_\star(K_{X/Y}+ L_m)
\end{equation}
is generically isomorphic. This is easy: let $u$ be a local section of $\cE_m$, defined on $\Omega\subset Y_0$ (i.e. $\Omega$ is contained in the set of regular values of $p$). 
It would be enough to show that we have
\begin{equation}
\int_{p^{-1}(\Omega)}\vert u(dt)\vert^2e^{-\vph_{L_m}}< \infty.
\end{equation}
By definition of the metric $\vph^{(m)}_{X/Y}$ we have 
\begin{equation}\label{msri30}
\int_{p^{-1}(\Omega)}\vert u(dt)\vert^2e^{-\vph_{L_m}}\leq 
\int_{p^{-1}(\Omega)}\vert u(dt)\vert^{2/m}e^{-\vph_{L}},
\end{equation}
and the right hand side term in \eqref{msri30} is convergent, given that $\cI(h_L)= \cO_X$. 
\end{proof}

\medskip

\begin{rem}
Actually, the hypothesis ``$\cI(h_L)= \cO_X$'' can be relaxed. As we see from the 
proof above, Theorem \ref{plurican} would follow provided that
we have 
\begin{equation}
\int_{p^{-1}(\Omega)}\vert u(dt)\vert^{2/m}e^{-\vph_{L}}<\infty
\end{equation} 
for any local section $u$ of $\cE_m$ defined on some open set $\Omega$ contained in a Zariski 
dense subset of $X$.
\end{rem}
\medskip

\section{Further results}

\smallskip

\noindent We will present next a beautiful result recently obtained  
by S.~Takayama in \cite{taka2}. He has established 
an estimate concerning the singularities 
of the metric $h^{(m)}_\cE$ on $p_{\star}(mK_{X/Y})$ in the following context.

Let $p:X\to C$ be an algebraic fiber space over a curve (so that 
in particular $X$ and $C$ are non-singular). 
We assume that for some positive integer $m$ the direct image vector bundle
\begin{equation}\label{msri50}
\cE_m:= p_\star\left(mK_{X/Y}\right)
\end{equation}
is non-zero, of rank $r$. Let $y_1,\dots, y_s$ be the set of singular values of $p$, and let $C^\star:= C\sm\{y_1,\dots, y_s\}$. 

Let $u\in H^0(\Omega, \cE_m)$ be a local holomorphic section of \eqref{msri50}.
It is clear that the function
\begin{equation}\label{msri51}
\tau\to |u(\tau)|^2_{h^{(m)}_{\cE}}
\end{equation}
is locally bounded at each point of the intersection $\Omega\cap C^\star$. 
\smallskip

\noindent In the article \cite{taka2}, the author is obtaining an upper bound of
\eqref{msri51} near the singular points of $p$. Let $y_1$ be one of the singular points of $p$, and let $t$ be a local coordinate on the curve $C$ centered at $y_1$. 
We denote by $X_0$ the scheme-theoretic fiber of $p$ at $0$. Then $X_0$ is a divisor of $X$, 
and let 
\begin{equation}\label{msri52}
\nu_0:= \sup\left\{r> 0: \frac{1}{|s_{0}|^{2r}}\in L^1_{\rm loc}(X)\right\}  
\end{equation}
be the \emph{log-canonical threshold} of the pair $(X, X_0)$,
where $s_0$ is the section corresponding to the singular fiber $X_0$.
A very particular case of Takayama's result states as follows.

\begin{theorem}\cite{taka2}\label{assNS}
Let $u$ be a holomorphic section of $\cE_m$ defined locally near $t= 0$. Then there 
exists a constant $C(u)> 0$ such that we have
\begin{equation}\label{msri61}
|u(t)|^{2/m}_{h^{(m)}_{\cE}}\leq \frac{C(u)}{|t|^{2(1-\nu_0)}}\log^n\!\frac{1}{|t|}.
\end{equation}  
\end{theorem}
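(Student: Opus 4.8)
The plan is to reduce the statement to a purely local computation near $t=0$ and to carry it out on a log-resolution of the pair $(X,X_0)$ where the central fibre has simple normal crossings.

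First I would unwind the metric. Since $u$ is a section of $\cE_m=p_\star(mK_{X/C})$ over a disc $\Delta\ni 0$, it is a genuine holomorphic section of $mK_{X/C}$ on $p^{-1}(\Delta)$, so $\wt u:=u\ot (p^\star dt)^{\ot m}$ is a holomorphic section of $mK_X$ there. For $t\neq 0$ the fibre $X_t$ is smooth and
\begin{equation*}
|u(t)|^{2/m}_{h^{(m)}_\cE}=\int_{X_t}\big|u_t\big|^{2/m},\qquad u_t:=\big(\wt u/(p^\star dt)^{\ot m}\big)\big|_{X_t},
\end{equation*}
where $|u_t|^{2/m}$ is the intrinsic $\tfrac{2}{m}$-density attached to the $m$-canonical form $u_t$. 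Since this density is a bona fide measure, the fibre integral is invariant under modifications, so I would fix a log-resolution $\mu:X'\to X$, an isomorphism over $X\sm X_0$, with $\mu^\star X_0=\sum_j b_jE_j$ of snc support and $K_{X'/X}=\sum_j a_jE_j$, $a_j\ge 0$. Then $\mu_\star\cO(mK_{X'})=\cO(mK_X)$ gives $p'_\star(mK_{X'/C})=\cE_m$ with the \emph{same} metric over $\Delta^\star$, and $\nu_0=\min_j\frac{a_j+1}{b_j}$. Hence I may assume $X_0=\sum_j b_jE_j$ is snc from the start, and $\wt u$ vanishes to order $c_j\ge m a_j$ along $E_j$.

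The core is the local estimate. I would cover a neighbourhood of the (compact) central fibre by finitely many charts; on a chart meeting $E_1,\dots,E_k$ take coordinates $z=(z_1,\dots,z_{n+1})$ with $E_j=(z_j=0)$ and $t=\prod_{j\le k} z_j^{b_j}$, and write $\wt u=F\,(dz_1\wed\cdots\wed dz_{n+1})^{\ot m}$. Setting $\lambda_j:=\frac{1+c_j/m}{b_j}$ and singling out the index $j_0$ minimising $\lambda_j$, I would use $\frac{dt}{t}=\sum_j b_j\frac{dz_j}{z_j}$ to solve for $dz_{j_0}$ and divide, obtaining on $X_t$ the density
\begin{equation*}
|u_t|^{2/m}=\frac{|F|^{2/m}}{b_{j_0}^2}\,\frac{|z_{j_0}|^2}{|t|^2}\prod_{j\neq j_0} i\,dz_j\wed d\ol z_j,\qquad |z_{j_0}|^2=\frac{|t|^{2/b_{j_0}}}{\prod_{j\le k,\,j\neq j_0}|z_j|^{2b_j/b_{j_0}}}.
\end{equation*}
Substituting $|z_{j_0}|$ and $|F|^{2/m}=\prod_j|z_j|^{2c_j/m}(\cdots)$ and eliminating $z_{j_0}$, all $|t|$-factors collapse to the single power $|t|^{2\lambda_{j_0}-2}=|t|^{-2(1-\lambda_{j_0})}$, while the exponent of each remaining component direction becomes exactly $-2+2b_j(\lambda_j-\lambda_{j_0})\ge -2$, with equality precisely when $\lambda_j=\lambda_{j_0}$.

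Finally I would read off the asymptotics and patch. As $\lambda_{j_0}\ge\nu_0$ and $|t|<1$, we get $|t|^{-2(1-\lambda_{j_0})}\le |t|^{-2(1-\nu_0)}$. The transverse directions integrate to $O(1)$; each \emph{resonant} direction $\lambda_j=\lambda_{j_0}$ has density exponent $-2$ and ranges over an annulus whose inner radius is a positive power of $|t|$, producing one factor $\log\frac1{|t|}$ — exactly as in the model $t=z_1z_2$, where $\int_{|t|<|z_1|<1}|z_1|^{-2}\,i\,dz_1\wed d\ol z_1=2\pi\log\frac1{|t|}$ — whereas the non-resonant directions converge. Since the number of resonant directions along a depth-$k$ stratum is $\le k-1\le n$, each chart is bounded by $C\,|t|^{-2(1-\nu_0)}\log^n\frac1{|t|}$, with $C$ controlled by $\sup|F|$ and the geometry of $X_0$. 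Adding the uniformly bounded part of $X_t$ away from $X_0$ yields the stated bound with a constant $C(u)$ depending on $u$.

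The main obstacle is precisely this local analysis: one must organise the snc combinatorics so that the minimising component $E_{j_0}$ is singled out at every point, separate the resonant directions $\lambda_j=\lambda_{j_0}$ (which create the logarithms) from the strictly convergent ones, and check that nested resonances along a depth-$k$ stratum contribute at most $k-1\le n$ factors of $\log\frac1{|t|}$. By contrast, the birational reduction and the patching of the local bounds into one estimate with a single constant are comparatively routine.
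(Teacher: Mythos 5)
Your proposal is correct and follows essentially the same route as the paper's sketch: reduce to a simple normal crossings central fibre using the birational invariance (isometry) of the $L^{2/m}$ fibre integrals, then compute in local coordinates with $t=\prod_j z_j^{b_j}$, eliminate one distinguished direction by solving $p^\star dt=0$ along the fibre, and read off the power of $|t|$ together with one logarithm for each resonant direction, of which there are at most $n$. Your version is in fact somewhat more complete than the sketch given here, which treats only the simplified case where all vanishing orders are zero (so that $j_0$ is the index maximizing $b_j$ and the local exponent is $1-1/b_{n+1}$) and defers to \cite{taka2} the discrepancy bookkeeping $c_j\ge ma_j$, $\lambda_j=(1+c_j/m)/b_j$, $\nu_0=\min_j (a_j+1)/b_j$ that you carry out, and which is precisely what makes the bound close up with the log-canonical threshold of the original pair $(X,X_0)$ rather than with the multiplicities on the resolution.
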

\noindent The result in \cite{taka2} is far more complete than this, e.g. the total 
space $X$ is allowed to have canonical singularities. However, we will only 
say a few words about the proof in the particular case evoked above, as follows.

One important ingredient in the arguments is that if we have a birational map
\begin{equation}
\pi:\wh X\to X
\end{equation}
and if $\wh p: \wh X\to C$ is the composed map, then the induced map
\begin{equation}
\wh p_\star(mK_{\wh X/C})\to p_\star(mK_{X/C})
\end{equation}
is an isometry. This follows directly from the definitions in the smooth case; it is still true if 
$X$ has canonical singularities, cf. \cite{taka2}. 

Thanks to this remark, given a point $x_0\in X_0$ in the singular fiber $X_0= p^{-1}(0)$ 
we can find local coordinates $(z_1,\dots, z_{n+1})$ on an open set $U\subset X$ such that 
the map $p$ corresponds to 
\begin{equation}
(z_1,\dots, z_{n+1})\to t= z_{k+1}^{b_{k+1}}\cdots z_{n+1}^{b_{n+1}}
\end{equation}   
where $k\geq 0$ is a positive integer and 
we assume that we have $1\leq b_{k+1}\leq \dots \leq b_{n+1}$. 

Let $U_t\subset \bC^n$ given by the inequalities
\begin{equation}
|z_j|^2\leq 1,\quad |t|^{2/b_l}\leq |z_l|^2\leq 1 
\end{equation}
for $j= 1,\dots k$ and for $l=k+1,\dots n$ respectively. 
We consider the projection map
\begin{equation}
\pi_{n+1}:X_t\cap U\to U_t\quad (z_1,\dots, z_n, z_{n+1})\to (z_1,\dots, z_n),
\end{equation}
which can be seen as the uniformization domain of the function
\begin{equation}\label{msri63}
z_{n+1}^{b_{n+1}}= \frac{t}{\prod_{j= k+1}^n z_j^{b_j}}. 
\end{equation} 

\noindent Let $u$ be a local section of $\cE_m$, as in Theorem \ref{assNS}. 
We interpret it as section of $p_\star(K_{X/C}+ L_m)$, where $L_m$ is the bundle
$(m-1)K_{X/C}$; on the other hand, we write
\begin{equation}\label{msri70}
u(dt^{\otimes m})= \sigma_{u, m}\left(dz^\prime\wedge p^\star(dt)\right)^{\otimes m}
\end{equation} 
so that $\sigma_{u, m}(dz^\prime)^{\otimes m}$ is a local section of 
$\displaystyle mK_{X/C}|_U$ 
corresponding to $u$, cf.\ section 3. Here  we use the notation 
$dz^\prime:= dz_1\wedge\dots\wedge dz_n$.  
%

By the definition of the metric $h^{(m)}_{X/C}$, we see that 
contribution of $X_t\cap U$ to the evaluation of \eqref{msri61} 
is bounded by 
\begin{equation}\label{msri60}
\left(\int_{X_t\cap U}|\sigma_{u,m}|^{2/m}d\lambda(z^\prime)\right)^m
\end{equation}
cf. \cite{taka2}. Next, the relation \eqref{msri70} implies that 
$$\displaystyle \sigma_{u, m}\left(z_{n+1}^{b_{n+1}-1}\prod_{j= k+1}^n z_j^{b_j}\right)^m$$
is a holomorphic function on $U$ (depending on the section $u$ and the local coordinates $(z_j)$ which we assume fixed). In particular, there exists a constant $C(u)$ 
depending on 
$u$ such that we have the 
inequality 
\begin{equation}\label{msri71}
\left\vert\frac{\sigma_{u, m}}{z_{n+1}^m}\right\vert^{2/m} \leq \frac{C(u)}{|t|^2}
\end{equation}
on $X_t\cap U$.
We therefore have to obtain an upper bound for the quantity
\begin{equation}\label{msri62}
\frac{1}{|t|^2}\int_{X_t\cap U}|z_{n+1}|^2d\lambda
\end{equation}
and by \eqref{msri63} this is equal to 
\begin{equation}\label{msri64}
\frac{1}{|t|^{2(1-1/b_{n+1})}}\int_{U_t}\frac{d\lambda}{\prod |z_j|^{2b_j/b_{n+1}}}
\end{equation}
up to a fixed constant, from which the statement \ref{assNS} follows (the 
logarithmic term appears because some of the coefficients $b_j$ 
above are equal to $b_{n+1}$).
Again, the ``real proof'' 
in \cite{taka2} is much more subtle, we have only presented here some of the main  
arguments in a simplified context.\qed
\medskip

\noindent The article \cite{taka2} contains equally an interesting corollary, arising from
algebraic geometry, cf.\ the article \cite{Fn}, Theorem 1.6,
as follows.
Assume that we have an algebraic fiber space
\begin{equation}\label{msri65}
p:X\to Y 
\end{equation}
such that the following requirements are satisfied.
\begin{enumerate}

\item[{\rm (a)}]
For any point $y\in Y$ there exists a germ of curve $(C, y)$ containing the point $y$ and such that $X_C:= p^{-1}(C)$ is smooth, and such that the 
restriction of $p$ to $X_C\sm X_y$ is a submersion. 
\smallskip

\item[{\rm (b)}] The log-canonical threshold $\nu$ corresponding to $(X_C, X_y)$ is maximal, i.e. equal to 1. 
\end{enumerate}

\noindent Then we have the following statement.

\begin{theorem}\cite{taka2}
Let $p:X\to Y$ be an algebraic fiber space satisfying the properties {\rm (a)} and 
{\rm (b)}above. We assume moreover that the 
direct image $\cE_m$ is a vector bundle, and let 
$X_m:= \bP(\cE_m^\star)$ be the corresponding projective bundle. 
Then the tautological bundle 
$\cO_{\cE_m}(1)$ has a metric with positive curvature current, and whose Lelong 
numbers at each point of $X_m$ are equal to zero.
\end{theorem}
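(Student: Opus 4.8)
The plan is to manufacture the metric on $\cO_{\cE_m}(1)$ directly from the Griffiths positivity of the direct image $\cE_m$ itself, and then to extract the vanishing of the Lelong numbers from Takayama's estimate \ref{assNS} specialized to $\nu_0=1$. Since here $L=0$ we have $\cI(h_L)=\cO_X$, so Theorem \ref{plurican} (together with Theorem \ref{rel2}) applies and shows that the canonical $L^2$--metric $h^{(m)}_\cE$ on $\cE_m=p_\star(mK_{X/Y})$ is positively curved in the sense of Griffiths. Consequently the dual metric $h^\star:=(h^{(m)}_\cE)^\star$ on $\cE_m^\star$ is semi-negatively curved, and by Remark \ref{obs} the induced metric on the tautological bundle $\cO_{\cE_m}(1)$ over $X_m=\bP(\cE_m^\star)$ has a psh local weight, i.e.\ a positive curvature current. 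Concretely, after choosing a local holomorphic frame $u_1,\dots,u_r$ of $\cE_m$ near a point of $Y$ and the dual affine fibre coordinates, the weight of $\cO_{\cE_m}(1)$ is $\psi=\log|\xi|^2_{h^\star}$, where $\xi=\xi(y,w)$ is the tautological holomorphic section of $\pi^\star\cE_m^\star$; this $\psi$ is psh and locally bounded above.

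Next I would localize the problem. Over the regular values of $p$ the metric $h^{(m)}_\cE$ varies smoothly (this is the content of Section 3.1 and \ref{bp35}), so $\psi$ is smooth there and its Lelong numbers vanish automatically; the only points to examine are $w_0\in X_m$ lying over a singular value $y_1$. Fixing such a $w_0$, I would estimate $\psi$ from below. Writing $\xi$ in the frame above, for a Euclidean-unit fibre vector one has $|\xi|^2_{h^\star}\ge \lambda_{\min}(h^\star)\,|\xi|^2_{\mathrm{eucl}}=\lambda_{\max}(h^{(m)}_\cE)^{-1}|\xi|^2_{\mathrm{eucl}}$, whence $\psi\ge \log|\xi|^2_{\mathrm{eucl}}-\log\lambda_{\max}\big(h^{(m)}_\cE\big)$. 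Since $\log|\xi|^2_{\mathrm{eucl}}$ is bounded on the chart, the whole matter reduces to bounding the largest eigenvalue of $h^{(m)}_\cE$ near $y_1$, equivalently the frame norms $|u_j|^2_{h^{(m)}_\cE}$, since $\lambda_{\max}\le r\max_j|u_j|^2_{h^{(m)}_\cE}$.

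Here the hypotheses enter. By (a) there is a germ of curve $(C,y_1)$ with $X_C=p^{-1}(C)$ smooth and $p|_{X_C}$ a submersion off $X_{y_1}$; the isometry/base-change for direct images (already used in Section 3 and in \ref{assNS}) identifies $\cE_m|_C$ with $(p_C)_\star(mK_{X_C/C})$. By (b) the log-canonical threshold is $\nu_0=1$, so Takayama's estimate \ref{assNS} degenerates to the purely logarithmic bound $|u_j(t)|^{2/m}_{h^{(m)}_\cE}\le C(u_j)\log^n\!\tfrac1{|t|}$ along $C$, i.e.\ $|u_j|^2_{h^{(m)}_\cE}\le C\log^{nm}\tfrac1{|t|}$ and hence $\lambda_{\max}(h^{(m)}_\cE)\le C\log^{nm}\tfrac1{|t|}$. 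Plugging this into the previous inequality yields $\psi\ge -C\log\log\tfrac1{|t|}-C'$. A $\log\log$ lower bound on a psh weight forces its Lelong number to vanish: indeed $\nu(\psi,w_0)=\liminf_{w\to w_0}\psi(w)/\log|w-w_0|\ge 0$, while the lower bound shows the ratio is eventually nonpositive because $\log\log\tfrac1{|t|}$ is negligible compared with $\log\tfrac1{|w-w_0|}$ and $\psi$ is bounded above; hence $\nu(\psi,w_0)=0$. As $w_0$ and the singular value were arbitrary, all Lelong numbers of $\Theta_{\cO_{\cE_m}(1)}$ vanish.

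The hard part will be precisely the passage from the one-variable estimate to a statement on the total space $X_m$. Theorem \ref{assNS} is genuinely a statement over a curve, so two points need care: (i) justifying the base-change identification $\cE_m|_C\cong (p_C)_\star(mK_{X_C/C})$ and the compatibility of the two canonical $L^2$--metrics under it, which is exactly where the smoothness of $X_C$ granted by (a) is indispensable; and (ii) controlling $\lambda_{\max}(h^{(m)}_\cE)$, and therefore $\psi$, as $w\to w_0$ along \emph{all} directions of $X_m$, not merely along the vertical slice $\pi^{-1}(C)$. For (ii) I would exploit that (a) furnishes such a curve through \emph{every} point of $Y$, together with the smoothness of $h^{(m)}_\cE$ over regular values and the upper semicontinuity of Lelong numbers (Siu), in order to upgrade the curve-wise bound on the frame norms to a genuine neighborhood bound. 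Once this uniformity is secured, the computation of the preceding paragraph applies verbatim and concludes the proof.
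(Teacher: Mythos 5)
Your overall strategy coincides with the paper's: positivity of the induced metric on $\cO_{\cE_m}(1)$ from Theorem \ref{plurican} applied with $L=\cO_X$ (so $\cI(h_L)=\cO_X$ trivially), and vanishing of Lelong numbers from Takayama's estimate, Theorem \ref{assNS}, specialized to $\nu_0=1$, where the bound degenerates to a purely logarithmic one. The first half of your argument (the psh weight $\psi$, the reduction to bounding $\lambda_{\max}(h^{(m)}_\cE)$, the $\log^{nm}$ bound along the curve, the liminf computation) is sound. The genuine gap is your step (ii): you treat the passage from the curve to the total space as requiring a bound on $\lambda_{\max}(h^{(m)}_\cE)$ on a full \emph{neighborhood} of a point $w_0\in X_m$ lying over a singular value, and you propose to obtain such uniformity from Siu's upper semicontinuity of Lelong numbers together with the fact that (a) provides a curve through every point. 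That tool cannot do this job: Siu's theorem asserts that the superlevel sets $\{\nu\ge c\}$ of a closed positive current are analytic; it offers no mechanism for converting estimates along the curves of (a) --- whose constants $C(u)$ in Theorem \ref{assNS} depend on both the section and the curve, with no uniformity as the base point and the curve vary --- into bounds on a neighborhood. Your proof stops exactly at the point you yourself flag as ``the hard part''.

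The repair, which is precisely the one-line fact the paper invokes, is that \emph{no neighborhood bound is needed}, because the Lelong number is a liminf, equivalently because Lelong numbers can only increase under restriction to a submanifold. Indeed
\begin{equation*}
\nu(\psi,w_0)=\liminf_{w\to w_0}\frac{\psi(w)}{\log|w-w_0|},
\end{equation*}
and restricting the approach set to the submanifold $S:=\pi^{-1}(C)\subset X_m$, where $(C,y)$ is the curve of hypothesis (a) through $y=\pi(w_0)$, can only increase the liminf; hence $\nu(\psi,w_0)\le\nu\big(\psi|_{S},w_0\big)$, while $\nu(\psi,w_0)\ge 0$ since $\psi$ is locally bounded above. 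Your point (i) is valid (the canonical $L^2$ metric is computed fiberwise, the fibers of $p$ and of $p_C:=p|_{X_C}$ over points of $C$ coincide, and $X_C$ is smooth by (a), so Takayama's theorem applies to $(p_C)_\star(mK_{X_C/C})$ and its canonical metric agrees with $h^{(m)}_\cE|_{C}$ over $C\sm\{y\}$). Therefore your $\log\log$ lower bound for $\psi$ holds on $S$, and your own liminf computation then gives $\nu\big(\psi|_{S},w_0\big)=0$, whence $\nu(\psi,w_0)=0$ pointwise, with no uniformity in $w_0$ or in the curve ever being required. Two minor further remarks: for $m\ge 2$ the smoothness of $h^{(m)}_\cE$ over the regular locus $Y_0$ that you assert is more than what is known (or needed) --- local boundedness of the metric, which does hold there, already forces the Lelong numbers to vanish over $Y_0$; and the same restriction argument makes your localization step superfluous, since it treats all points of $X_m$ at once.
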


\noindent The proof is a quick consequence of Theorem \ref{assNS} combined with 
the fact that the Lelong numbers of a closed positive current are \emph{increasing}
by restriction to a submanifold.



\end{document}